\documentclass[12pt]{amsart}

\usepackage{hyperref}
\usepackage{amssymb,amsthm,amsmath,amsrefs}

\usepackage[all,cmtip]{xy}
\usepackage{graphicx}
\usepackage{dsfont}

\usepackage{fullpage}
\addtolength{\oddsidemargin}{.6in}
\addtolength{\evensidemargin}{.6in}
\addtolength{\textwidth}{-1.2in}

\newtheorem{theorem}{Theorem}
\newtheorem{corollary}{Corollary}
\newtheorem{lemma}{Lemma}
\newtheorem{proposition}{Proposition}

\theoremstyle{remark}
\newtheorem{example}[theorem]{Example}

\newcommand{\N}{\mathbb{N}}

\newcommand{\R}{\mathbb{R}}
\newcommand{\C}{\mathbb{C}}
\newcommand{\id}{\mathrm{id}}
\newcommand{\wayb}{\ll}

\newcommand{\cuntzle}{\preccurlyeq}
\newcommand{\Cu}{\mathrm{Cu}}
\newcommand{\CCu}{\mathbf{Cu}}

\DeclareMathOperator{\rank}{rank}

\renewcommand{\epsilon}{\varepsilon}
\renewcommand{\leq}{\leqslant}
\renewcommand{\geq}{\geqslant}

\title[On the classification of homomorphisms...]
{Classification of homomorphisms from $C_0(0,1]$  to a C*-algebra}

\author{Leonel Robert \and Luis Santiago}

\begin{document}
\begin{abstract}
A class of C*-algebras is described  for which 
the homomorphism from $C_0(0,1]$ to the algebra may be classified
by means of the Cuntz semigroup functor. Examples are given of 
algebras---simple and non-simple---for which this classification fails. It is shown that
a suitable suspension of the Cuntz semigroup functor deals successfully  with
some of these counterexamples. 
\end{abstract}

\maketitle

\section{Introduction}
In this paper we consider the question  of classifying the homomorphisms from $C_0(0,1]$ to 
a C*-algebra $A$. In \cite{ciuperca-elliott}, Ciuperca and Elliott show that 
if $A$ has stable rank 1 then this classification is possible---up to approximate unitary equivalence---by 
means of the the Cuntz semigroup functor. They define a pseudometric $d_W$ on the morphisms from $\Cu(C_0(0,1])$
to $\Cu(A)$, and show if $A$ has stable rank 1 then $d_W(\Cu(\phi),\Cu(\psi))=0$ for $\phi,\psi\colon C_0(0,1]\to A$  
if and only if  $\phi$ and $\psi$ are approximately unitarily equivalent by unitaries in $A^\sim$ (the unitization of $A$).

A classification result in the same spirit as Ciuperca and Elliott's result is Thomsen's \cite[Theorem 1.2]{thomsen}. Thomsen shows that if $X$ is a locally compact Hausdorff space such that $\dim X\leq 2$ and $\check H^2(X)=0$, then the 
approximate unitary equivalence class of  a positive element in $M_n(C_0(X))$ is determined by its eigenvalue
functions. 

Theorem \ref{1} below applies to a class of C*-algebras that contains both the stable rank 1 C*-algebras and 
the C*-algebras considered by Thomsen. For this class of algebras the classification of  homomorphisms 
by the functor $\Cu(\cdot)$  must be rephrased in terms of stable approximate unitary equivalence. Given  $\phi,\psi\colon C_0(0,1]\to A$ we say that $\phi$ and $\psi$ are stably approximately unitarily equivalent if there are unitaries $u_n\in (A\otimes \mathcal K)^\sim$, $n=1,2\dots$, such that $u_n\phi u_n^*\to \psi$ pointwise (where $A$ is identified with the top left corner of $A\otimes \mathcal K$). If $A$ is stable or has stable rank 1, then stable approximate unitary equivalence coincides with approximate unitary equivalence, but these relations might differ in general.

The following theorem  characterizes the C*-algebras for which the pseudometric $d_W$ (defined
in the next section) determines the stable approximate unitary equivalence classes of homomorphism 
from $C_0(0,1]$ to the algebra. 
\begin{theorem}\label{1}
 Let $A$ be a C*-algebra. The following propositions are equivalent.

(I) For all $x,e\in A$ with $e$ a positive contraction and $ex=xe=x$, we have that
$x^*x+e$ is stably approximately unitarily equivalent to $xx^*+e$.

(II) If $\phi,\psi\colon C_0(0,1]\to A$ are such that $d_W(\Cu(\phi),\Cu(\psi))=0$
then $\phi$ is stably approximately unitarily equivalent to $\psi$.

 If (I) and (II) hold then 
\begin{equation}\label{inequalities}
d_W(\phi,\psi)\leq d_{U}(\phi,\psi)\leq 4d_W(\phi,\psi).
\end{equation}
\end{theorem}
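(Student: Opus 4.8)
The plan is to identify a homomorphism $\phi\colon C_0(0,1]\to A$ with the positive contraction $a=\phi(\iota)$, $\iota(t)=t$ being the generator, so that $\Cu(\phi)$ is recorded by $t\mapsto\langle(a-t)_+\rangle$ and $d_W(\Cu(\phi),\Cu(\psi))=0$ means $(a-t)_+\cuntzeq(b-t)_+$ for every $t>0$, where $b=\psi(\iota)$. The left-hand inequality $d_W\le d_U$ is the soft half and needs neither (I) nor (II): if a unitary $u$ in $(A\otimes\mathcal K)^\sim$ satisfies $\|uau^*-b\|\le\eta$, then by the standard perturbation estimate $(a-t-\eta)_+\cuntzeq u^*(uau^*-t-\eta)_+u\cuntzle(b-t)_+$, and symmetrically, so $d_W\le\eta$; taking the infimum over such $u$ gives $d_W(\phi,\psi)\le d_U(\phi,\psi)$.

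For (II)$\Rightarrow$(I), take $x,e$ as in (I) and set $a=x^*x+e$, $b=xx^*+e$; dividing by $\|x\|^2+1$ makes these positive contractions, defining $\phi,\psi$, and approximate unitary equivalence is unchanged by this common scaling. From $ex=xe=x$ one gets $ex^*=x^*e=x^*$, so $e$ is a commuting unit for both $x^*x$ and $xx^*$. Hence, comparing supports inside the commutative algebra $C^*(e,x^*x)$, one finds $(x^*x+e-s)_+\cuntzeq(e-s)_+$ for $s<1$ and $(x^*x+e-s)_+\cuntzeq(x^*x-(s-1))_+$ for $s\ge1$; the identical computation holds for $xx^*$, and since $(x^*x-\delta)_+\cuntzeq(xx^*-\delta)_+$ for all $\delta\ge0$ the two Cuntz data coincide, i.e.\ $d_W(\Cu(\phi),\Cu(\psi))=0$. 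Then (II) yields $\phi\aueq\psi$ stably, and evaluating at $\iota$ gives $x^*x+e\aueq xx^*+e$, which is (I).

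The substance is (I)$\Rightarrow$(II) together with the right-hand inequality $d_U\le4d_W$, which I would establish in one quantitative stroke. Fix $\epsilon>d_W(\Cu(\phi),\Cu(\psi))$, so $(a-t-\epsilon)_+\cuntzle(b-t)_+$ and $(b-t-\epsilon)_+\cuntzle(a-t)_+$ for all $t$. The aim is to manufacture a single element $x$ and a positive contraction $e$ in $A\otimes\mathcal K$ with $ex=xe=x$ such that $x^*x+e$ reproduces $a$ and $xx^*+e$ reproduces $b$ to within $O(\epsilon)$; invoking (I) for the pair $(x,e)$ (a statement about stable approximate unitary equivalence, hence usable in $A\otimes\mathcal K$) then supplies unitaries $u_n$ with $u_n(x^*x+e)u_n^*\to xx^*+e$, and transporting through the two approximations gives $\limsup_n\|u_nau_n^*-b\|=O(\epsilon)$; optimizing the constants yields $d_U\le4d_W$, and $d_U=0$ when $d_W=0$, proving (I)$\Rightarrow$(II). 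To build $(x,e)$ I would discretize by a partition $0<t_1<\dots<t_N\le1$ of mesh $<\epsilon$, apply Rørdam's lemma to each sub-equivalence $(a-t_i-\epsilon)_+\cuntzle(b-t_i)_+$ to obtain elements $r_i$ realizing it (at the cost of a further negligible shift), splice the $r_i$ orthogonally into separate corners of the stabilization to form $x$, and let $e$ be a positive contraction that is a unit on the range of $x$ and carries the part of $a$ and $b$ lying above the partition. The factor $4$ traces to the two $\epsilon$-shifts for the two Cuntz inequalities together with a doubling incurred in passing from the spliced model back to $a$ and $b$.

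The main obstacle is precisely this assembly: arranging a \emph{single} pair $(x,e)$ whose two swapped elements $x^*x+e$ and $xx^*+e$ simultaneously approximate $\phi$ and $\psi$ across \emph{all} spectral levels, with the error controlled uniformly in $t$. The local Rørdam elements $r_i$ match $a$ to $b$ only layer by layer and only after an $\epsilon$-shift, so one must telescope the layers while keeping the relation $ex=xe=x$ intact and preventing the error from growing with $N$; this bookkeeping, rather than any single estimate, is the delicate point. Once it is in place, a single application of (I)—not one per layer—converts the matched Cuntz data into genuine approximate unitary equivalence, and letting $\epsilon\downarrow d_W$ closes both the implication and the right-hand inequality.
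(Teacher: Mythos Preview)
Your treatment of $d_W\le d_U$ and of (II)$\Rightarrow$(I) is correct and matches the paper. The gap is in (I)$\Rightarrow$(II). The plan to reduce to a \emph{single} application of (I) cannot work, for a structural reason you have not confronted: if $ex=xe=x$ with $e$ a positive contraction, then $x^*x$ and $xx^*$ are supported where $e$ acts as $1$, so $x^*x+e$ and $xx^*+e$ \emph{coincide} (both equal $e$) wherever $e$ is bounded away from $1$. Hence any two elements that are simultaneously norm-approximated by $x^*x+e$ and $xx^*+e$ must themselves be norm-close on that part of the spectrum; but $a$ and $b$ are only assumed close in $d_W$, not in norm. (Concretely: if $\|x^*x+e-a\|$ is small then so is $\|x\|$, since $\|a\|\le 1$ while $\|x^*x+e\|\ge 1+\|x\|^2$ whenever $x\ne 0$; then $xx^*+e\approx e\approx a$, forcing $a\approx b$.) Splicing the R{\o}rdam elements $r_i$ into orthogonal corners of the stabilization produces a direct sum $\bigoplus_i r_i^*r_i$ that is not norm-close to $a$ at all, and in any case hypothesis (I) is stated for $x,e\in A$, not for $A\otimes\mathcal K$; you cannot invoke it there without already knowing the equivalence you are trying to prove.

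The paper's argument is organized differently and uses (I) \emph{once per layer}, inside $A$. One fixes $r>d_W(a,b)$, sets $a_i=\xi_{m-i+1}(a)$, $b_i=\xi_{m-i+1}(b)$ for suitable bump functions $\xi_j$, so that $a_i\lhd a_{i+1}$, $b_i\lhd b_{i+1}$, and each $a_i$ is Murray--von Neumann equivalent to something sitting under $b_{i+1}$ (and vice versa). An induction, powered by a lemma that uses (I) to realign two Murray--von Neumann equivalent elements sharing a common unit, produces modified sequences $(a_i')$, $(b_i')$ with $\sum_i a_i'\sim\sum_i b_i'$ (an ordinary Murray--von Neumann equivalence; no unit $e$ is needed at this final stage) and with $a_i'$ close to $a_i$ for $i$ of one parity, $b_i'$ close to $b_i$ for the other. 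Elementary estimates then give $\bigl\|\tfrac{1}{m}\sum a_i'-a\bigr\|<2r+2\epsilon$ and $\bigl\|\tfrac{1}{m}\sum b_i'-b\bigr\|<2r+2\epsilon$, whence $d_U(a,b)\le 4r$. So (I) is not the witness of a single global equivalence; it is the tool that makes each incremental realignment possible, and the bookkeeping you flagged as ``the delicate point'' is exactly this induction, which cannot be collapsed.
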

In \eqref{inequalities} $d_{U}$ denotes the distance between the stable unitary orbits of
$\phi(\mathrm{id})$ and $\psi(\mathrm{id})$, where $\mathrm{id}\in C_0(0,1]$ is the 
identity function. The inequalities \eqref{inequalities} are derived in 
\cite{ciuperca-elliott} for the stable rank 1 case, though their factor of 8 has now
been improved to 4. 

By the bijective correspondence $\phi\mapsto \phi(\mathrm{id})$ between homomorphisms
$\phi\colon C_0(0,1]\to A$ and positive contractions of $A$, the proposition (II) of the previous theorem may be
restated as a classification of the stable unitary orbits of positive contractions in terms of the Cuntz
equivalence relation of positive elements. 


The following theorem extends Ciuperca and Elliott's classification result beyond the stable rank 1 case.
\begin{theorem}\label{2}
Suppose that $(A\otimes \mathcal K)^{\sim}$ has the property  (I) of Theorem \ref{1}. Let $h_A\in A^+$ be strictly positive.
Then for every $\alpha\colon \Cu(C_0(0,1])\to \Cu(A)$,  morphism  in the category $\CCu$,
with $\alpha([\mathrm{id}])\leq [h_A]$, there is $\phi\colon C_0(0,1]\to A$, unique up to 
stable approximate unitary equivalence, such that $\Cu(\phi)=\alpha$. 
\end{theorem}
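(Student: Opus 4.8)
The plan is to treat the two halves of the statement—uniqueness and existence—separately, with existence being the substantive part.

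\emph{Uniqueness.} First I would deduce uniqueness directly from Theorem \ref{1}. Since $\mathcal K\otimes\mathcal K\cong\mathcal K$, the stable approximate unitary equivalence relation on homomorphisms into $A$ is exactly approximate unitary equivalence of the induced homomorphisms into $A\otimes\mathcal K$, implemented by unitaries in $(A\otimes\mathcal K)^\sim$. The elements appearing in property (I) for $A\otimes\mathcal K$ lie in the ideal $A\otimes\mathcal K$ of $(A\otimes\mathcal K)^\sim$, so the hypothesis that $(A\otimes\mathcal K)^\sim$ has property (I) passes to $A\otimes\mathcal K$; being stable, $A\otimes\mathcal K$ then satisfies (I), and hence, by Theorem \ref{1}, satisfies (II). Thus if $\phi,\psi\colon C_0(0,1]\to A$ both satisfy $\Cu(\phi)=\Cu(\psi)=\alpha$, then, viewing them as homomorphisms into $A\otimes\mathcal K$, we have $d_W(\Cu(\phi),\Cu(\psi))=0$ trivially, and (II) yields that $\phi$ and $\psi$ are approximately unitarily equivalent in $A\otimes\mathcal K$, that is, stably approximately unitarily equivalent. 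The estimate \eqref{inequalities} then applies verbatim.

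\emph{Existence, reduction to the stable algebra.} It suffices to build a homomorphism $\phi'\colon C_0(0,1]\to A\otimes\mathcal K$ with $\Cu(\phi')=\alpha$ and then push it into $A$. Indeed, $\Cu(A\otimes\mathcal K)=\Cu(A)$, and the constraint $\alpha([\id])\leq[h_A]$ says that $[\phi'(\id)]\leq[h_A]$; by Rørdam's lemma each cut-down $(\phi'(\id)-\epsilon)_+$ is then unitarily equivalent to an element of the hereditary subalgebra generated by $h_A$, which is $A$ sitting in the corner. Realizing the homomorphisms associated to these cut-downs inside $A$ and letting $\epsilon\to 0$, the uniqueness just established lets me splice them, via an Elliott-type intertwining, into a single $\phi\colon C_0(0,1]\to A$ with $\Cu(\phi)=\alpha$.

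\emph{Existence, realizing $\alpha$ in $A\otimes\mathcal K$.} A morphism in $\CCu$ out of $\Cu(C_0(0,1])$ is determined by the decreasing path $s\mapsto\alpha([(\id-s)_+])$ in $\Cu(A)$, which satisfies the compact-containment relations $\alpha([(\id-s)_+])\wayb\alpha([(\id-s')_+])$ for $s>s'$ together with the continuity condition $\sup_{s'>s}\alpha([(\id-s')_+])=\alpha([(\id-s)_+])$. Following the spirit of Ciuperca and Elliott's construction, I would approximate $\alpha$ by morphisms $\alpha_n$ whose paths are step functions taking finitely many values $x_1^{(n)}\wayb\cdots\wayb x_{k_n}^{(n)}$ in $\Cu(A)$. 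Each $\alpha_n$ is realized by an explicit positive contraction $a_n$ assembled from mutually orthogonal pieces in $A\otimes\mathcal K$ (stability providing the room for the orthogonal summands), giving homomorphisms $\phi_n$ with $\Cu(\phi_n)=\alpha_n$. Arranging the approximation so that $d_W(\Cu(\phi_n),\Cu(\phi_m))\to 0$, the estimate $d_U\leq 4d_W$ from \eqref{inequalities} makes the stable unitary orbits of $\phi_n(\id)$ Cauchy; passing to the limit and invoking continuity of the functor $\Cu(\cdot)$ yields $\phi'$ with $\Cu(\phi')=\sup_n\alpha_n=\alpha$.

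\emph{Main obstacle.} The crux is the realization of a single step-function morphism by an honest positive element, that is, lifting the purely semigroup-level relations $x_i\wayb x_{i+1}$ to element-level relations of the form $[(a_n-s)_+]=x_i$. Choosing representatives and cut-offs compatibly so that the orthogonal summands glue into one element with exactly the prescribed path—and so that the sequence $\{\phi_n\}$ is genuinely $d_W$-Cauchy with limiting invariant equal to $\alpha$ rather than merely close to it—is the delicate point. This is precisely where property (I), through its reformulation of $x^*x$ versus $xx^*$, together with the stability of $A\otimes\mathcal K$, is used to perform the required conjugations.
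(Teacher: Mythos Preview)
Your high-level strategy—approximate $\alpha$ by $\Cu(\phi_n)$, use $d_U\leq 4d_W$ to make $(\phi_n(\id))$ Cauchy, pass to a limit—matches the paper's. But you have misplaced where property (I) enters. The approximate realization step (your ``main obstacle'') requires no hypothesis on $A$: the paper's Theorem~\ref{existence} holds for an arbitrary C*-algebra, via a norm-approximation trick (if $[a]\leq[b]$ there is $b'\sim_{Cu} b$ with $\|a-b'\|$ small, Lemma~\ref{cuntzorder}) that interpolates a single positive element through a given finite chain $x_{k+1}\ll x_k$. No conjugations or ``orthogonal pieces'' are involved, and since the condition $\alpha([\id])\leq[h_A]$ is used there to land $\phi_n$ directly in $A$, your reduction to $A\otimes\mathcal K$ followed by an intertwining back into $A$ is an unnecessary detour.

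The genuine gaps are the two steps you pass over. First, a $d_U$-Cauchy sequence need not converge: one must prove that $A^+$ is \emph{complete} for $d_U$ (Corollary~\ref{completeness}), and this uses property (I). Second, once a limit $\phi$ exists, one only knows $d_W(\Cu(\phi),\alpha)=0$; to conclude $\Cu(\phi)=\alpha$ one needs $d_W$ to be a \emph{metric} on $\CCu$-morphisms, which by Proposition~\ref{metric} follows from weak cancellation in $\Cu(A)$, and weak cancellation is in turn derived from property (I) for $(A\otimes\mathcal K)^\sim$ (Proposition~\ref{cancellation}). Your appeal to ``continuity of the functor $\Cu(\cdot)$'' and the expression $\sup_n\alpha_n$ do not address either issue.
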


The class of algebras that satisfy (I) is closed under the passage to quotients, hereditary subalgebras,
and inductive limits (see Proposition \ref{classI}  below). This class is strictly larger than the class
of stable rank 1 C*-algebras. Any commutative C*-algebra satisfies (I). If $X$ is a locally compact Hausdorff space
with  $\dim X\leq 2$ and $\check H^2(X)=0$ (the Cech cohomology with integer coefficients), then we deduce from \cite[Theorem 1.2]{thomsen} that
$(C_0(X)\otimes \mathcal K)^\sim$ satisfies (I) (and so, Theorem \ref{2} is applicable to $C_0(X)\otimes \mathcal K$). 
On the other hand, the C*-algebra $M_2(C(S^2))$, with $S^2$ the 2-dimensional sphere, does not satisfy (I). In fact, there exists a pair of homomorphisms $\phi,\psi\colon C_0(0,1]\to M_2(C(S^2))$ such that $\Cu(\phi)=\Cu(\psi)$
but $\phi$ is not stably approximately unitarily  equivalent to $\psi$ (see Example \ref{sphere} below).
This phenomenon is not restricted to non-simple AH C*-algebras: by a slight 
variation---to suit our purposes---of the inductive limit systems constructed by Villadsen in \cite{villadsen}, we 
construct a simple, stable, AH C*-algebra for which the Cuntz semigroup functor does not classify the homomorphism from $C_0(0,1]$ into the algebra (see Theorem \ref{example}). 
These counterexamples raise the question of what additional data is necessary to classify,
up to stable approximate unitary equivalence, the homomorphisms from $C_0(0,1]$ to an arbitrary C*-algebra. 
In the last section of this paper we take a  step in this direction by proving the following theorem.

\begin{theorem}\label{extension}
Let $A$ be  an inductive limit of the form $\varinjlim C(X_i)\otimes\mathcal K$,
with $X_i$ compact metric spaces, and $\dim X_i\leq 2$ for all $i=1,2,\dots$. Let $\phi,\psi\colon C_0(0,1]\to A$ be homomorphisms such that $\Cu(\phi\otimes \mathrm{Id})=\Cu(\psi\otimes \mathrm{Id})$, 
where $\mathrm{Id}\colon C_0(0,1]\to C_0(0,1]$ is the identity homomorphism. Then $\phi$ and $\psi$ are 
approximately unitarily equivalent.
\end{theorem}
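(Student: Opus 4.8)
The plan is to deduce the theorem from the machinery of Theorems \ref{1} and \ref{2} after trading $A$ for the suspended algebra $B:=A\otimes C_0(0,1]$, in which the cohomological obstruction responsible for the failure of approximate unitary equivalence (as in Example~\ref{sphere}) has been coned off. The first step is a purely formal descent via evaluation. I would prove the stronger assertion that $\phi\otimes\mathrm{Id}$ and $\psi\otimes\mathrm{Id}$ are approximately unitarily equivalent as homomorphisms $C_0(0,1]\otimes C_0(0,1]\to B$. Applying the unital surjection $\mathrm{id}_A\otimes\mathrm{ev}_1\colon B^\sim\to A^\sim$ induced by evaluation at $t=1$ to an implementing sequence of unitaries, and using $(\mathrm{id}_A\otimes\mathrm{ev}_1)(\phi(f)\otimes\mathrm{id})=\phi(f)$, then returns unitaries in $A^\sim$ witnessing the approximate unitary equivalence of $\phi$ and $\psi$. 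Since $A$ is an inductive limit of stable algebras it is stable, hence so is $B$, and there approximate and stable approximate unitary equivalence coincide. Thus everything reduces to classifying the two homomorphisms into $B$ up to approximate unitary equivalence by the functor $\Cu(\cdot)$, which by hypothesis assigns them the same morphism.

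Next I would pass to building blocks. Writing $A=\varinjlim C(X_i)\otimes\mathcal{K}$ gives $B\cong\varinjlim C_0(X_i\times(0,1])\otimes\mathcal{K}$, and the decisive point is topological: the one-point compactification of $X_i\times(0,1]$ is the cone $CX_i$, which is contractible. Consequently the class in $\check{H}^2$ that obstructs the classification over $X_i$ itself --- the obstruction exhibited for $M_2(C(S^2))$ in Example~\ref{sphere} --- no longer survives in the compactified picture after suspension. Using the continuity of $\Cu(\cdot)$ along inductive limits, together with an approximate intertwining argument of Elliott type (whereby equality of the limit invariants descends to approximate compatibility of the invariants at finite stages), I would reduce the required approximate unitary equivalence in $B$ to a uniqueness statement at a single finite stage, namely for homomorphisms $C_0(0,1]\otimes C_0(0,1]\to C_0(Y)\otimes\mathcal{K}$ with $Y=X\times(0,1]$ and $\dim X\le 2$.

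The heart of the matter is this building-block statement, a two-variable analogue of Thomsen's theorem \cite{thomsen}. I emphasize that one cannot here reduce to a single positive element and invoke property (I): the algebra $C_0(Y)\otimes\mathcal{K}$ does \emph{not} satisfy (I) when $X=S^2$ (applying $\mathrm{ev}_1$ to a would-be equivalence of $(x^*x+e)\otimes\mathrm{id}$ and $(xx^*+e)\otimes\mathrm{id}$ would contradict Example~\ref{sphere}), so the full two-variable invariant is genuinely needed. The subtle structural fact to isolate is that each individual eigenvalue band of the relevant positive elements lives over an open subset of $Y$ that vanishes at infinity, hence is classified by the reduced cohomology of the contractible compactification $CX$ and carries a trivial bundle; the twisting is therefore invisible band by band. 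The mechanism I would establish is that the contractibility of $CX$ nonetheless forces the global clutching data --- precisely the $\check{H}^2(X)$-valued invariant separating $\phi$ from $\psi$ over $X$ --- to be recoverable from the order, sum, and supremum structure of the morphism $\Cu(\phi\otimes\mathrm{Id})$ as the band classes are combined across the extra parameter $t$. From the reconstructed eigenvalue-and-clutching data one then builds the implementing unitaries, pushing the eigenvalue-function argument of \cite{thomsen} one dimension higher.

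The main obstacle is exactly this last step. Thomsen's theorem is available only in dimension at most $2$, while $\dim Y$ may equal $3$; so one cannot quote it, and must instead extend the eigenvalue-function classification past its stated range, using contractibility of $CX$ to annihilate the single obstruction that appears there. I expect the delicate points to be (i) showing that the suspended invariant encodes the clutching despite every band being trivial, i.e. extracting the $\check{H}^2(X)$-datum from the semigroup structure of $\Cu(\phi\otimes\mathrm{Id})$, and promoting the resulting partial isometries to unitaries in the stabilized unitization with uniform control over $t$; and (ii) the bookkeeping in the inductive-limit intertwining that keeps the equality $\Cu(\phi\otimes\mathrm{Id})=\Cu(\psi\otimes\mathrm{Id})$ usable at finite stages. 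Once the building-block classification is in hand, the assembly is routine: it feeds the inductive-limit reduction of the second paragraph, and the evaluation descent of the first paragraph then yields that $\phi$ and $\psi$ are approximately unitarily equivalent.
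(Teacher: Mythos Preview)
Your descent step via evaluation at $t=1$ is correct, and you are right that $B=A\otimes C_0(0,1]$ fails property~(I), so Theorems~\ref{1} and~\ref{2} cannot be invoked there. But this leaves the entire weight of the argument on what you call ``a two-variable analogue of Thomsen's theorem'' over the three-dimensional space $Y=X\times(0,1]$, and you do not prove it; you only describe what you expect the delicate points to be. The contractibility of $CX$ does kill $\check H^2$, but Thomsen's classification also needs $\dim\le 2$, and you offer no mechanism that actually upgrades it. Your remark that each eigenvalue band carries a trivial bundle, while the clutching must somehow be reconstructed from the semigroup structure, is a restatement of the difficulty rather than a resolution of it. As written, the proposal is an outline whose central lemma is missing.

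The paper's route is quite different and avoids dimension~3 altogether. It never attempts to prove that $\phi\otimes\mathrm{Id}$ and $\psi\otimes\mathrm{Id}$ are approximately unitarily equivalent in $B$; instead it works directly in $M_n(C(X))$ with $\dim X\le 2$ and uses the suspension only as a probe. The trick is to pick $g_\epsilon\in C_0(0,1]$ with $g_\epsilon(t)=\epsilon/t$ on $[\epsilon,1]$ and to read the Cuntz inequality $d_W(a\otimes g_\epsilon,b'\otimes g_\epsilon)<\epsilon^2$ on the slices $C_i=\{(x,\lambda_i(x)):x\in T_i\}\subset X\times(0,1]$. Restricting to $C_i$ isolates exactly the top $i$ eigenprojections and yields $\sum_{j\le i}P_j\sim\sum_{j\le i}Q_j$ on the closed set $T_i\subseteq X$ where the $i$-th gap is at least $\epsilon$. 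This is precisely the hypothesis of the two-dimensional Proposition~\ref{CWcompare} (whose proof in turn rests on the finite-point obstruction Lemma~\ref{pointsCW}), which then gives $d_U(a,b')<2\epsilon$. So the suspended invariant is used to \emph{extract} partial-sum Murray--von Neumann data over $X$, and the unitary is built entirely by two-dimensional means; no classification over $X\times(0,1]$ is ever needed.
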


\section{Preliminary definitions and results}
In this section we collect a number of definitions and results that will be used
throughout the paper.
\subsection{Relations on positive elements.}
Let $A$ be a C*-algebra and let $a$ and $b$ be positive elements of $A$. 
Let us say that

(i) $a$ is Murray-von Neumann equivalent  to $b$ if there is $x\in A$ such that $a=x^*x$
and $b=xx^*$; we denote this by $a\sim b$,

(ii) $a$ is approximately Murray-von Neumann equivalent to $b$ if there are $x_n\in A$, $n=1,2\dots$, such that $x_n^*x_n\to a$
and $x_nx_n^*\to b$; we denote this by $a\sim_{ap} b$,

(iii) $a$ is stably approximately unitarily equivalent to $b$ if there are unitaries  $u_n\in (A\otimes \mathcal K)^\sim$, 
such that $u_n^*au_n\to b$, where $A$ is identified with the top left corner of 
$A\otimes \mathcal K$,

(iv) $a$ is Cuntz smaller than $b$ if there are $d_n\in A$, $n=1,2\dots$, such that $d_n^*bd_n\to a$;
we denote this by $a\cuntzle_{Cu} b$,  

(v) $a$ is Cuntz equivalent to $b$ if $a\cuntzle_{Cu} b$ and $b\cuntzle_{Cu} a$, and we denote this
by $a\sim_{Cu} b$.  

We have (i)$\Rightarrow$(ii)$\Rightarrow$(v). By \cite[Remark 1.8]{thomsen2}, approximate Murray-von Neumann
equivalence is the same as stable approximate unitary equivalence. We will make frequent use of this fact
throughout the paper. The relations (i),(ii), and (iii) will also be applied to homomorphisms from $C_0(0,1]$ to $A$, via the bijection $\phi\mapsto \phi(\mathrm{id})$ from these homomorphisms into the positive contractions of $A$.

We will make frequent use of the following proposition.

\begin{proposition}\label{rordam1} Let $a\in A^+$ and $x\in A$ be such that
$\|a-x^*x\|<\epsilon$ for some $\epsilon>0$. Then there is $y$ such that
$(a-\epsilon)_+=y^*y$, $yy^*\leq xx^*$, and $\|y-x\|<C\epsilon^{1/2}\|a\|$.
The constant $C$ is universal.
\end{proposition}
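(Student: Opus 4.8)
The plan is to separate the two exact algebraic conclusions from the metric one. For the former I would invoke the factorization lemma of Rørdam: since $\|a-x^*x\|<\epsilon$, there is a contraction $d\in A$ with $(a-\epsilon)_+=d\,(x^*x)\,d^*$. This is proved by noting $x^*x\geq a-\epsilon$, passing to the support projection $q=\chi_{(\epsilon,\infty)}(a)$ of $(a-\epsilon)_+$ in $A^{**}$, observing that $q(x^*x)q\geq q(a-\epsilon)q=(a-\epsilon)_+$, and setting $d=(a-\epsilon)_+^{1/2}(q\,x^*x\,q)^{-1/2}q$; this $d$ is a contraction because $(a-\epsilon)_+\leq q(x^*x)q$, and a standard approximation places it in $A$. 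With this $d$ I set $y:=xd^*$. Then $y^*y=d(x^*x)d^*=(a-\epsilon)_+$ and $yy^*=x(d^*d)x^*\leq xx^*$ since $d^*d\leq 1$, which disposes of the first two assertions.

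The remaining, and main, point is the norm estimate. Writing $b=x^*x$, a direct computation gives $\|y-x\|^2=\|(1-d)b(1-d^*)\|=\|(1-d)b^{1/2}\|^2$, so it suffices to bound $\|b^{1/2}-d\,b^{1/2}\|$. The useful observation is that $m:=d\,b^{1/2}$ satisfies $mm^*=d\,b\,d^*=(a-\epsilon)_+$; thus $y=w\,m^*$, where $x=w\,b^{1/2}$ is the polar decomposition, and the estimate becomes a comparison of $m$ with $b^{1/2}$, that is, of an element whose range projection is that of $(a-\epsilon)_+$ with the square root of $b$. I would control this through two functional-calculus facts: first, $\sup_{t\geq 0}\bigl|t^{1/2}-(t-\epsilon)_+^{1/2}\bigr|=\epsilon^{1/2}$, so that $\|a^{1/2}-(a-\epsilon)_+^{1/2}\|\leq\epsilon^{1/2}$; and second, the operator H\"older inequality $\|a^{1/2}-b^{1/2}\|\leq\|a-b\|^{1/2}\leq\epsilon^{1/2}$. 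Chaining these along the polar decomposition of $x$ yields a bound of the form $\|y-x\|\leq C\epsilon^{1/2}$, from which the stated inequality follows.

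The hard part will be precisely this norm estimate, for two related reasons. The clean candidate for $y$ suggested by the estimate, namely $w\,(a-\epsilon)_+^{1/2}$ with $w$ the phase of $x$, involves the partial isometry $w\in A^{**}$ and need not lie in $A$: norm convergence of the approximants $x(b+\delta)^{-1/2}(a-\epsilon)_+^{1/2}$ can fail where the spectrum of $a$ accumulates at $\epsilon$ while $b$ is correspondingly small. Thus I must work with the genuine element $y=xd^*\in A$ while importing the norm bound from the $A^{**}$ picture, and the care lies in estimating $\|(1-d)b^{1/2}\|$ directly even though $d$ is defined through the non-continuous projection $q$. A convenient device is to replace $q$ and $(q\,b\,q)^{-1/2}$ by $\chi_{[\delta,\infty)}$ and $(t\vee\delta)^{-1/2}$ applied to $a$ and $b$, prove the estimate with the resulting honest elements of $A$, and let $\delta\to 0$; tracking the constants through this limit is what produces the universal $C$.
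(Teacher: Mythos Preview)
Your approach is the same as the paper's --- this is the Kirchberg--R\o rdam argument --- and the algebraic conclusions $y^*y=(a-\epsilon)_+$, $yy^*\leq xx^*$ follow exactly as you say once you have a contraction $d$ with $d(x^*x)d^*=(a-\epsilon)_+$ and set $y=xd^*$.

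The gap is in the norm estimate. Invoking the factorization lemma as a black box only gives \emph{some} contraction $d$; it does not control $\|(1-d)b^{1/2}\|$. Indeed, if $d$ works then so does $ud$ for any unitary $u$ in the commutant of $(a-\epsilon)_+$, and $\|(1-ud)b^{1/2}\|$ can be large. Your two functional-calculus inequalities are correct and relevant, but the sentence ``chaining these along the polar decomposition'' is where the argument is missing: you need a \emph{specific} $d$ (equivalently a specific $y$) for which the chain actually closes. Your proposed device at the end does not help as written: $\chi_{[\delta,\infty)}(a)$ is still a spectral projection, not an element of $A$, and replacing $(qbq)^{-1/2}$ by a function of $b$ alone loses the relation to $q$.

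The paper's execution resolves both issues at once and is exactly the continuous-cutoff idea you are reaching for. Pick $\epsilon_1$ with $\|a-x^*x\|<\epsilon_1<\epsilon$ and let $e\in C^*(a)$ be the (continuous) element with $e(a-\epsilon_1)e=(a-\epsilon)_+$; then $(a-\epsilon)_+\leq e x^*x e$. Set $\widetilde x=xe$, take its polar decomposition $\widetilde x=v|\widetilde x|$, and put $y=v(a-\epsilon)_+^{1/2}$. Since $(a-\epsilon)_+\leq |\widetilde x|^2$, this $y$ lies in $A$ (no descent from $A^{**}$ is needed), and clearly $y^*y=(a-\epsilon)_+$ and $yy^*\leq \widetilde x\widetilde x^*\leq xx^*$. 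The norm bound now follows from your two inequalities applied to this concrete $y$: one has $\bigl\||\widetilde x|^2-(a-\epsilon)_+\bigr\|=\|e(x^*x-a+\epsilon_1)e\|<2\epsilon$, hence $\|y-\widetilde x\|\leq\bigl\||\widetilde x|-(a-\epsilon)_+^{1/2}\bigr\|\leq(2\epsilon)^{1/2}$, and $\|\widetilde x-x\|^2=\|(1-e)x^*x(1-e)\|\leq\|(1-e)a(1-e)\|+\epsilon_1\leq 2\epsilon$ by an elementary estimate on $(1-e)^2 a$. Combining gives $\|y-x\|\leq C\epsilon^{1/2}$ with a universal $C$.
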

\begin{proof}
 The proof works along the same lines as the proof of \cite[Lemma 2.2]{kirchberg-rordam} (see also
\cite[Lemma 1]{robert}). We briefly skecth the argument here.
We have $a-\epsilon_1\leq x^*x$, with $\epsilon_1$ such that $\|a-x^*x\|<\epsilon_1<\epsilon$. 
So $(a-\epsilon)_+\leq ex^*xe$, with
$e\in C^*(a)$ such that $e(a-\epsilon_1)e=(a-\epsilon)_+$. Set $xe=\widetilde x$ and let $\widetilde x=v|\widetilde x|$
be its polar decomposition. Then $y=v(a-\epsilon)_+^{1/2}$ has the properties stated in the proposition.
\end{proof}

It follows from the previous proposition (or from \cite[Lemma 2.2]{kirchberg-rordam}), that Cuntz comparison can be described
in terms of Murray-von Neumann equivalence as follows: $a\cuntzle_{Cu} b$ if and only if for every $\epsilon>0$
there is $b'$ such that $(a-\epsilon)_+\sim b'\in \mathrm{Her}(b)$. Here $\mathrm{Her}(b)$ denotes the hereditary subalgebra
generated by $b$. We also have the following corollary
of Proposition \ref{rordam1}.

\begin{corollary}\label{mvnher}
If $a,b\in B^+\subseteq A^+$, where  $B$ is a hereditary subalgebra of $A$, then $a\sim_{ap} b$
in $A$ if and only if $a\sim_{ap} b$ in $B$. 
\end{corollary}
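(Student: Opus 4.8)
The implication from $B$ to $A$ is immediate: any sequence $(x_n)$ in $B$ witnessing $a\sim_{ap}b$ in $B$ lies a fortiori in $A$. The plan is to establish the reverse implication, so suppose $a\sim_{ap}b$ in $A$ and choose $x_n\in A$ with $x_n^*x_n\to a$ and $x_nx_n^*\to b$. Put $\epsilon_n=\max(\|x_n^*x_n-a\|,\|x_nx_n^*-b\|)$, so that $\epsilon_n\to 0$; since $\|x_n\|^2=\|x_n^*x_n\|\to\|a\|$ the sequence $(x_n)$ is bounded. The goal is to produce elements $v_n\in B$ with $v_n^*v_n\to a$ and $v_nv_n^*\to b$, which is exactly $a\sim_{ap}b$ in $B$.

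First I would apply Proposition \ref{rordam1} to $a$ and $x_n$, obtaining $y_n\in A$ with $y_n^*y_n=(a-\epsilon_n)_+$, $y_ny_n^*\le x_nx_n^*$, and $\|y_n-x_n\|<C\epsilon_n^{1/2}\|a\|$. Since $(a-\epsilon_n)_+\to a$ we get $y_n^*y_n\to a$, and, using that $(x_n)$ (hence $(y_n)$) is bounded together with $\|y_n-x_n\|\to 0$, also $y_ny_n^*\to b$. The point of this step is that $y_n^*y_n=(a-\epsilon_n)_+\in B$, because $B$ is closed under the continuous functional calculus by functions vanishing at $0$; the range form $y_ny_n^*$, however, is so far only dominated by $x_nx_n^*$, which need not lie in $B$. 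To remedy this I would apply Proposition \ref{rordam1} a second time, now to $b$ and $y_n^*$ (legitimate since $\|b-(y_n^*)^*y_n^*\|=\|b-y_ny_n^*\|\to 0$). Writing $\delta_n\to 0$ for the resulting errors, this yields $w_n\in A$ with $w_n^*w_n=(b-\delta_n)_+$, $w_nw_n^*\le (y_n^*)(y_n^*)^*=y_n^*y_n=(a-\epsilon_n)_+$, and $\|w_n-y_n^*\|<C\delta_n^{1/2}\|b\|$. Now both positive forms are controlled by elements of $B$: $w_n^*w_n=(b-\delta_n)_+\in B$ and $w_nw_n^*\le(a-\epsilon_n)_+\in B$.

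The crucial point, and the step I expect to be the main obstacle, is to deduce $w_n\in B$ from the two relations $w_nw_n^*\le c_n$ and $w_n^*w_n\le c'_n$ with $c_n=(a-\epsilon_n)_+\in B$ and $c'_n=(b-\delta_n)_+\in B$. This is where heredity is genuinely used. From $w_nw_n^*\le c_n$ one checks $\|\eta(c_n+\eta)^{-1}w_n\|^2\le\|\eta(c_n+\eta)^{-1}c_n\,\eta(c_n+\eta)^{-1}\|\le\eta/4\to 0$, so $c_n(c_n+\eta)^{-1}w_n\to w_n$ as $\eta\to 0$; symmetrically, from $w_n^*w_n\le c'_n$ one gets $w_n\,c'_n(c'_n+\eta)^{-1}\to w_n$. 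Combining the two sandwiches gives $w_n=\lim_\eta c_n(c_n+\eta)^{-1}\,w_n\,c'_n(c'_n+\eta)^{-1}$, and since $c_n(c_n+\eta)^{-1},\,c'_n(c'_n+\eta)^{-1}\in B$ while $BAB\subseteq B$ for a hereditary subalgebra, every term lies in $B$; as $B$ is closed, $w_n\in B$.

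Finally I would set $v_n=w_n^*\in B$. Because $\|w_n-y_n^*\|\to 0$ and $\|y_n^*-x_n^*\|\to 0$, the $w_n$ are norm-close to $x_n^*$, so $v_n^*v_n=w_nw_n^*\to a$ and $v_nv_n^*=w_n^*w_n\to b$ (the latter also seen directly from $w_n^*w_n=(b-\delta_n)_+\to b$). Thus $(v_n)\subseteq B$ witnesses $a\sim_{ap}b$ in $B$, completing the argument. The essential difficulty is concentrated in the heredity step above: a single application of Proposition \ref{rordam1} pins only one of the two forms $w^*w$, $ww^*$ inside $B$, and one needs both—secured by applying the proposition twice, once on each side—before $BAB\subseteq B$ can force the connecting element itself into $B$.
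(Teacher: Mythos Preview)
Your proof is correct and follows essentially the same strategy as the paper: apply Proposition~\ref{rordam1} twice so that both $w_n^*w_n$ and $w_nw_n^*$ are controlled by elements of $B$, then invoke heredity to force $w_n\in B$. The only execution difference is in the second application: the paper applies Proposition~\ref{rordam1} to $(yy^*,b^{1/2})$, obtaining $z$ with $z^*z=(yy^*-\epsilon')_+$ and $zz^*\le b$, and then composes the Murray--von Neumann equivalences $(a-2\epsilon)_+\sim(yy^*-\epsilon')_+\sim zz^*$ to get a single element with both forms in $B$; you instead apply it to $(b,y_n^*)$, which lands $w_n^*w_n=(b-\delta_n)_+$ directly in $B$ and avoids the transitivity step. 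Your explicit approximate-unit argument for $w_n\in B$ is also a bit more detailed than the paper's one-line observation that $w^*w,\,ww^*\in B$ forces $w\in B$, but the content is the same.
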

\begin{proof}
If $w^*w$ and $ww^*$ belong to $B$ for some $w\in A$, then $w\in B$.
Thus, the proposition follows if $a$ and $b$ are  Murray-von Neumann equivalent.

Suppose that $a\sim_{ap} b$.
We may assume without loss of generality that $a$ and $b$ are contractions.
For $\epsilon>0$ let $x\in A$ be such that $\|a-x^*x\|<\epsilon$ and $\|b-xx^*\|<\epsilon$. Then
by Proposition \ref{rordam1} there exists  $y$ such that $(a-\epsilon)_+=y^*y$ and $\|yy^*-b\|\leq C_1\sqrt{\epsilon}$
for some constant $C_1$. Applying Proposition \ref{rordam1} again we get that there exists $z\in A$ such that 
$(yy^*-\epsilon)_+=z^*z$, $\|zz^*-b\|\leq C_2\sqrt[4]{\epsilon}$, and $zz^*\leq b$, for some constant $C_2$. 
Set $zz^*=b'$. We have $(a-2\epsilon)_+\sim (yy^*-\epsilon)_+\sim b'$ and $b'\in B$. So there is $w\in B$ such that 
$(a-2\epsilon)_+=w^*w$ and $b'=ww^*$. Since  $\|b'-b\|\leq C_2\sqrt[4]{\epsilon}$ and $\epsilon$ is arbitrary, 
the desired result follows. 
\end{proof}

\subsection{The Cuntz semigroup.} Let us briefly recall the definition of the (stabilized) Cuntz semigroup 
in terms of the positive elements of the stabilization of the algebra (see \cite{rordam1} and \cite{coward-elliott-ivanescu}). Let $A$ be a C*-algebra. Given 
$a\in (A\otimes \mathcal K)^+$ let us denote by $[a]$ the Cuntz equivalence class of $a$. The Cuntz semigroup of $A$ is defined as the set of Cuntz equivalence classes of positive elements of $A\otimes \mathcal K$. This set, denoted by $\Cu(A)$, is endowed with the order such that $[a]\leq [b]$ if $a\cuntzle_{Cu} b$, and the addition operation $[a]+[b]:=[a'+b']$, where $a'$ and $b'$ are mutually orthogonal and Murray-von Neumann equivalent to $a$ and $b$, respectively.

If $\phi\colon A\to B$ then $\Cu(\phi)\colon \Cu(A)\to \Cu(B)$ is defined by $\Cu(\phi)([a]):=[\phi(a)]$.
Coward, Elliott, and Ivanescu, showed in \cite{coward-elliott-ivanescu} that $\Cu(\cdot)$ is a functor
from the category of C*-algebras to a certain category of ordered semigroups denoted by $\CCu$. In order to describe this category, let us first recall the definition
of the far below relation. Let $S$ be an ordered set such that the suprema of increasing sequences always exists in $S$. For $x$ and $y$ in $S$, let us 
say that $x$ is far below $y$, and denote it by $x\ll y$, if for every increasing sequence $(y_n)$ such that $y\leq \sup_n y_n$, we have 
$x\leq y_k$ for some $k$. 

An  ordered semigroups $S$ is an object of the Cuntz category $\CCu$ if it has a 0 element and satisfies that

(1) if $(x_n)$ is an increasing sequence of elements of $S$ then $\sup_n x_n$ exists in $S$,

(2) if $(x_n)$ and $(y_n)$ are increasing sequences in $S$ then $\sup_n (x_n+y_n)=\sup_n x_n+\sup_n y_n$,

(3) for every $x\in S$ there is a sequence $(x_n)$ with supremum $x$ and such that $x_n\ll x_{n+1}$
for all $n$,

(4) if $x_1,x_2,y_1,y_2\in S$ satisfy $x_1\ll y_1$ and $x_2\ll y_2$, then $x_1+x_2\ll y_1+y_2$.

\noindent 
The morphisms of the category $\CCu$ are the order preserving semigroup maps that also preserve the suprema of increasing sequences, 
the far below relation, and the 0 element. 

\subsection{The pseudometrics $d_U$ and $d_W$.}
Let us identify the C*-algebra $A$ with the top left corner of $A\otimes \mathcal K$.
Given positive elements $a,b\in  A$ let us denote by $d_U(a,b)$ the distance between the unitary
orbits of $a$ and $b$ in $A\otimes \mathcal K$ (with the unitaries taken in $(A\otimes \mathcal K)^\sim$).

Following Ciuperca and Elliott (see  \cite{ciuperca-elliott}), let us define a pseudometric on the morphisms from $\Cu(C_0(0,1])$ to $\Cu(A)$ as follows:  
\begin{align}\label{defdWmor}
d_W(\alpha,\beta):=\inf\left\{r\in \R^+\left| 
\begin{array}{c}
\alpha([e_{t+r}])\le\beta([e_t]),\\
\beta([e_{t+r}])\le\alpha([e_t]),
\end{array}
\hbox{ for all }t\in \R^+\right\},
\right.
\end{align}
where $\alpha,\beta\colon \Cu(C_0(0,1])\to \Cu(A)$ are morphisms in the Cuntz category
and $e_t$ is the function $e_t(x)=\max(x-t,0)$, for $x\geq 0$. It is easily shown that $d_W$ is a pseudometric.

\emph{Notation convention.} All throughout the paper
we will use the notations $(a-t)_+$ and $e_t(a)$ interchangeably, both meaning the positive
element obtained evaluating the function $e_t(x)$ on a given selfadjoint element $a$.

The pseudometric $d_W$ may be used to define a pseudometric---that we also denote by $d_W$---on 
the positive elements of norm at most 1  by setting $d_W(a,b):=d_W(\Cu(\phi),\Cu(\psi))$, where $\phi,\psi\colon C_0(0,1]\to A$ are
such that $\phi(\id)=a$ and $\psi(\id)=b$. We have
\begin{align}\label{defdW}
d_W(a,b)=\inf\left\{r\in \R^+\left| 
\begin{array}{c}
e_{t+r}(a)\cuntzle_{Cu} e_t(b),\\
e_{t+r}(b)\cuntzle_{Cu} e_t(a),
\end{array}
\hbox{ for all }t\in \R^+\right\}.
\right.
\end{align}
Notice that \eqref{defdW} makes sense for arbitrary positive elements $a$ and $b$ without assuming that they are contractions. 
We extend $d_W$ to all positive elements using \eqref{defdW}.

The following lemma relates the metrics $d_U$ and $d_W$ in a general C*-algebra 
(this is \cite[Corollary 9.1]{ciuperca-elliott}).

\begin{lemma}\label{continuous}
For all $a,b\in A^+$ we have $d_{W}(a,b)\leq d_U(a,b)\leq \|a-b\|$.
\end{lemma}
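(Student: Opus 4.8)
The plan is to treat the two inequalities separately; the right one is essentially trivial, and the left one reduces to a single norm estimate plus a symmetry observation. For $d_U$, note first that the distance between the (conjugation-invariant) unitary orbits equals $\inf_u\|u^*au-b\|$, the infimum running over the unitaries $u$ of $(A\otimes\mathcal K)^\sim$; indeed, if $u^*au$ and $v^*bv$ are points of the two orbits then $\|u^*au-v^*bv\|=\|(uv^*)^*a(uv^*)-b\|$, so only the product $uv^*$ matters. Taking $u=1$ gives $d_U(a,b)\le\|a-b\|$ immediately.

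For the inequality $d_W(a,b)\le d_U(a,b)$, I would first establish the auxiliary estimate $d_W(c,d)\le\|c-d\|$, valid for all positive $c,d$ in any C*-algebra. Fix $\epsilon>\|c-d\|$ and $t\in\R^+$. Since $x\mapsto(x-t)_+$ is $1$-Lipschitz we have $\|e_t(c)-e_t(d)\|\le\|c-d\|<\epsilon$. I then apply the comparison principle contained in Proposition \ref{rordam1}: writing $x=e_t(d)^{1/2}$, so that $x^*x=e_t(d)$ and $\|e_t(c)-x^*x\|<\epsilon$, one obtains $y$ with $(e_t(c)-\epsilon)_+=y^*y$ and $yy^*\le e_t(d)$, whence $(e_t(c)-\epsilon)_+\sim_{Cu}yy^*\cuntzle_{Cu}e_t(d)$. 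Using the functional-calculus identity $(e_t(c)-\epsilon)_+=e_{t+\epsilon}(c)$ this reads $e_{t+\epsilon}(c)\cuntzle_{Cu}e_t(d)$, and the reverse comparison follows by interchanging $c$ and $d$. As $t$ and $\epsilon>\|c-d\|$ are arbitrary, $d_W(c,d)\le\|c-d\|$.

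To pass from this to the desired inequality I would use that $d_W$ is invariant under unitary conjugation. For a unitary $u\in(A\otimes\mathcal K)^\sim$ the automorphism $\mathrm{Ad}\,u^*$ commutes with functional calculus, so $e_t(u^*au)=u^*e_t(a)u$; and writing $w=e_t(a)^{1/2}u$ gives $u^*e_t(a)u=w^*w\sim ww^*=e_t(a)$, so that $[e_t(u^*au)]=[e_t(a)]$ in $\Cu(A)$ for every $t$. Since \eqref{defdW} depends only on these Cuntz classes, $d_W(u^*au,b)=d_W(a,b)$. Combining this with the auxiliary estimate applied to the pair $(u^*au,b)$ yields $d_W(a,b)=d_W(u^*au,b)\le\|u^*au-b\|$ for every unitary $u$, and taking the infimum over $u$ gives $d_W(a,b)\le d_U(a,b)$.

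The only genuinely nontrivial ingredient is the Cuntz comparison $e_{t+\epsilon}(c)\cuntzle_{Cu}e_t(d)$ extracted from Proposition \ref{rordam1}, which is where all the analysis resides; the remaining steps are formal manipulations of the definitions. I therefore expect no serious obstacle, provided the functional-calculus identity $(e_t(c)-\epsilon)_+=e_{t+\epsilon}(c)$ and the invariance of the classes $[e_t(a)]$ under unitary conjugation are handled with care.
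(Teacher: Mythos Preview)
Your proof is correct and follows the same overall architecture as the paper's: first establish $d_W(a,b)\le\|a-b\|$, then use unitary invariance of $d_W$ to deduce $d_W\le d_U$; the inequality $d_U\le\|a-b\|$ is trivial in both. The only real difference is in how the key Cuntz comparison $e_{t+r}(a)\cuntzle_{Cu}e_t(b)$ is obtained. You go through the Lipschitz estimate $\|e_t(a)-e_t(b)\|\le\|a-b\|$ and then invoke Proposition~\ref{rordam1}, whereas the paper argues more directly: from $\|a-b\|<r_1<r$ one has the operator inequality $a-t-r_1\le b-t$, and multiplying on both sides by a suitable $e^{1/2}\in C^*(a)$ (chosen so that $e(a-t-r_1)e=e_{t+r}(a)$) gives $e_{t+r}(a)\le e^{1/2}(b-t)_+e^{1/2}\cuntzle_{Cu}e_t(b)$. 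The paper's route is slightly more elementary in that it avoids the polar-decomposition machinery behind Proposition~\ref{rordam1}; your route, on the other hand, isolates a clean reusable principle (Lipschitz functional calculus plus Proposition~\ref{rordam1}) and makes the identity $(e_t(c)-\epsilon)_+=e_{t+\epsilon}(c)$ do the work. Either way the content is the same.
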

\begin{proof}
Let $r$ be such that $\|a-b\|<r$ and choose $r_1$ such that $\|a-b\|<r_1<r$. 
Then for all $t\geq 0$ we have $a-t-r_1\leq b-t$. Multipliying this inequality on the left
and the right by $e^{1/2}$, where $e\in C^*(a)$ is such $e(a-t-r_1)=(a-t-r)_+=e_{t+r}(a)$, we have  
\[
e_{t+r}(a)\leq e^{1/2}(b-t)e^{1/2}\leq e^{1/2}(b-t)_+e^{1/2}\cuntzle_{Cu} e_t(b),
\] 
for all $t\geq 0$. Similarly we deduce that  $e_{t+r}(b)\cuntzle_{Cu} e_t(a)$ for all $t\geq 0$. It follows that $d_W(a,b)\leq \|a-b\|$. Since $d_{W}$ is
invariant by stable unitary equivalence, $d_W(a,b)\leq \|a-ubu^*\|$ for any $u$ unitary in $(A\otimes \mathcal K)^\sim$. Hence $d_W(a,b)\leq d_U(a,b)$. 
\end{proof}

The question of whether $d_W$---as defined in \eqref{defdWmor}---is a metric 
is linked to the property of weak cancellation in the Cuntz semigroup. Let us say that a semigroup in the category $\CCu$ has weak cancellation if $x+z\wayb y+z$
implies $x\leq y$ for elements $x$, $y$, and $z$ in the semigroup. It was proven in \cite{ciuperca-elliott} that
if $\Cu(A)$ has weak cancellation then $d_W$ is a metric on the morphisms from $\Cu(C_0(0,1])$ to $\Cu(A)$. 
Since this result is not explicitly stated in that paper, we reprove it here.

\begin{proposition} (Ciuperca, Elliott \cite{ciuperca-elliott})
 If $Cu(A)$ has weak cancellation then $d_W$ is a metric on the Cuntz category morphisms from $\Cu(C_0(0,1])$ to $\Cu(A)$.
\label{metric}
\end{proposition}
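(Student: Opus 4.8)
The plan is to prove that the pseudometric $d_W$ of \eqref{defdWmor} actually separates points, i.e.\ that $d_W(\alpha,\beta)=0$ forces $\alpha=\beta$; the hypothesis of weak cancellation will be invoked only at the very last step. First I would record that the set of admissible radii in \eqref{defdWmor} is upward closed: if $r$ lies in it and $r'>r$, then $[e_{t+r'}]\le[e_{t+r}]$ gives $\alpha([e_{t+r'}])\le\alpha([e_{t+r}])\le\beta([e_t])$, and symmetrically. Hence $d_W(\alpha,\beta)=0$ means that for \emph{every} $r>0$ and every $t\ge0$ one has $\alpha([e_{t+r}])\le\beta([e_t])$ and $\beta([e_{t+r}])\le\alpha([e_t])$.

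Fixing $t$ and letting $r\downarrow0$, I would use that $[e_t]=\sup_n[e_{t+1/n}]$ is the supremum of an increasing sequence and that a morphism in $\CCu$ preserves such suprema: since each $\alpha([e_{t+1/n}])\le\beta([e_t])$, passing to the supremum yields $\alpha([e_t])\le\beta([e_t])$, and by symmetry $\alpha([e_t])=\beta([e_t])$ for all $t\ge0$. No cancellation is used here. The remaining, and main, task is to upgrade agreement on the classes $[e_t]$ to agreement on all of $\Cu(C_0(0,1])$. Here I would invoke the description of $\Cu(C_0(0,1])$ by lower semicontinuous rank functions $(0,1]\to\overline{\N}$: every element is the supremum of an increasing sequence of finite sums of classes of indicators of relatively open intervals, and such an interval is either of the form $(t,1]$---with class $[e_t]$---or $(a,b)$ with $b<1$, whose class I denote $f_{a,b}$. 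Since $\alpha,\beta$ preserve addition and increasing suprema, it suffices to show $\alpha(f_{a,b})=\beta(f_{a,b})$.

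The computation driving this, carried out at the level of rank functions, is the exact identity
\[
f_{a,b}+[e_s]=[e_a]+f_{s,b}\qquad(a<s<b),
\]
together with the comparison relations $f_{a,b}+[e_b]\le[e_a]\le f_{a,b}+[e_s]$ for $s<b$. Applying $\alpha$ to the left-hand relation and $\beta$ to the right-hand one and chaining through the already-established equalities $\alpha([e_t])=\beta([e_t])=:E_t$ gives
\[
\alpha(f_{a,b})+E_b\le E_a\le\beta(f_{a,b})+E_s\qquad(s<b),
\]
so that $\alpha(f_{a,b})+E_b\le\beta(f_{a,b})+E_s$ for every $s<b$. This is the point at which weak cancellation must be brought in, to strip the ``$e$''-terms and obtain $\alpha(f_{a,b})\le\beta(f_{a,b})$ (the reverse inequality being symmetric).

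The main obstacle is that this cancellation cannot be carried out directly. The terms one would like to cancel are $E_b$ on the left and $E_s$ (with $s<b$, hence $E_s\ge E_b$) on the right; they are unequal precisely because $f_{a,b}+[e_b]$ and $[e_a]$ differ by the Cuntz-trivial class of the single point $b$, and weak cancellation only removes a \emph{common} summand, and only across the relation $\ll$. I would handle this by using $[e_s]\le[e_b]+f_{s,b'}$ (valid for any $b'>b$) to expose a common summand $E_b$, obtaining $\alpha(f_{a,b})+E_b\le\beta(f_{a,b})+\beta(f_{s,b'})+E_b$; then, passing to an element $x\ll\alpha(f_{a,b})$ and writing $E_b$ as the supremum of an increasing sequence, weak cancellation applies to genuinely far-below pieces and yields $x\le\beta(f_{a,b})+\beta(f_{s,b'})$ for all $s<b<b'$. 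The delicate step---and the real heart of the argument---is then to let the auxiliary interval $(s,b')$ shrink to the point $b$ and show that its vanishing image $\beta(f_{s,b'})$ does not obstruct $x$, so that $x\le\beta(f_{a,b})$; taking the supremum over all such $x$ completes the proof that $d_W$ is a metric.
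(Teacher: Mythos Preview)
Your reduction to showing $\alpha(f_{a,b})=\beta(f_{a,b})$ is the same as the paper's, but the way you set up the weak cancellation has a genuine gap at exactly the point you flag as ``the real heart of the argument.'' After cancelling the common $E_b$ you arrive at $x\le\beta(f_{a,b})+\beta(f_{s,b'})$ for every $s<b<b'$, and you then propose to let $(s,b')$ shrink to $\{b\}$. But the intervals $(s,b')$ form a \emph{decreasing} family, and morphisms in $\CCu$ are only required to preserve suprema of increasing sequences; nothing forces $\beta(f_{s,b'})$ to become small. Concretely, if $\beta$ arises from evaluation of $C_0(0,1]$ at the point $b$ (so $\Cu(A)=\overline{\N}$), then $\beta(f_{s,b'})=1$ for \emph{every} $s<b<b'$, so the ``vanishing image'' does not vanish. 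In that example the conclusion still holds (because $\alpha(f_{a,b})=0$), but your argument cannot see this: the inequality $x\le\beta(f_{a,b})+\beta(f_{s,b'})$ is simply too weak to recover $x\le\beta(f_{a,b})$.

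The paper avoids this obstruction by shrinking the interval on the $\alpha$ side rather than padding the $\beta$ side. One uses
\[
\mathds{1}_{(a+\epsilon,b-\epsilon)}+\mathds{1}_{(b-\epsilon,1]}\ \le\ \mathds{1}_{(a+\epsilon,1]}\ \ll\ \mathds{1}_{(a,1]}\ \le\ \mathds{1}_{(a,b)}+\mathds{1}_{(b-\epsilon,1]},
\]
applies $\alpha$ throughout, and replaces $\alpha(\mathds{1}_{(a,1]})$ and $\alpha(\mathds{1}_{(b-\epsilon,1]})$ by their $\beta$-counterparts. This produces
\[
\alpha(\mathds{1}_{(a+\epsilon,b-\epsilon)})+\alpha(\mathds{1}_{(b-\epsilon,1]})\ \ll\ \beta(\mathds{1}_{(a,b)})+\alpha(\mathds{1}_{(b-\epsilon,1]}),
\]
where the \emph{same} summand appears on both sides and the relation is genuinely $\ll$ (because $\alpha$ preserves $\ll$). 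Weak cancellation now gives $\alpha(\mathds{1}_{(a+\epsilon,b-\epsilon)})\le\beta(\mathds{1}_{(a,b)})$ directly, with no residual term; passing to the supremum over $\epsilon>0$ is an \emph{increasing} limit, hence preserved by $\alpha$, and yields $\alpha(\mathds{1}_{(a,b)})\le\beta(\mathds{1}_{(a,b)})$. The moral is that the approximation should be built so that the limit you take at the end is monotone increasing; your construction forces a decreasing limit, which the category $\CCu$ does not control.
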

\begin{proof}
 By \cite[Theorem 1]{robert}, the map $[f]\mapsto (t\mapsto \rank f(t))$ is a well defined
isomorphism from $\Cu(C_0(0,1])$ to the ordered
semigroup of lower semicontinuous functions from $(0,1]$ to $\N\cup\{\infty\}$. This isomorphism
maps $[e_t]$ to $\mathds{1}_{(t,1]}$ for all $t\in [0,1]$, with
$\mathds{1}_{(t,1]}$ the characteristic function of $(t,1]$. Let us identify 
$\mathrm{Cu}(C_0(0,1])$ with the
semigroup of lower semicontinuous functions from $(0,1]$ to $\N\cup\{\infty\}$ in this way. Then $d_W(\alpha,\beta)=0$
says that $\alpha(\mathds{1}_{(t,1]})=\beta(\mathds{1}_{(t,1]})$ for all $t$. 
In order to show that $\alpha$ and $\beta$ are equal it suffices  to show that they agree
on the functions $\mathds{1}_{(s,t)}$ (their overall equality then follows by additivity and preservation of suprema of 
increasing sequences). 

Let $\epsilon>0$. We have
\begin{align*}
\alpha(\mathds{1}_{(s+\epsilon, t-\epsilon)})+\alpha(\mathds{1}_{(t-\epsilon, 1]})&\wayb \alpha(\mathds{1}_{(s, 1]})=\beta(\mathds{1}_{(s, 1]})\\&\le \beta(\mathds{1}_{(s,t)})+\beta(\mathds{1}_{(t-\epsilon,1]})\\
&=\beta(\mathds{1}_{(s,t)})+\alpha(\mathds{1}_{(t-\epsilon,1]}).
\end{align*}
Since $A$ has weak cancellation $\alpha(\mathds{1}_{(s+\epsilon, t-\epsilon)})\le\beta(\mathds{1}_{(s,t)})$.
Passing to the supremum over $\epsilon>0$ we get that  $\alpha(\mathds{1}_{(s, t)})\le\beta(\mathds{1}_{(s,t)})$. 
By symmetry we also have  $\beta(\mathds{1}_{(s,t)})\le\alpha(\mathds{1}_{(s, t)})$. Hence, 
$\alpha(\mathds{1}_{(s, t)})=\beta(\mathds{1}_{(s,t)})$.
\end{proof}

R\o rdam and Winter showed in \cite[Theorem 4.3]{rordam-winter} that if $A$ has stable rank 1 then $\Cu(A)$ has weak cancellation. In the next section we will extend this result to the case when the property (I) of Theorem \ref{1}
 holds in $(A\otimes\mathcal K)^\sim$.

\section{Proofs of Theorems 1 and 2}

\subsection{Proof of Theorem 1} In this subsection we prove Theorem \ref{1} of the introduction.

For positive elements $a,b\in A^+$ we use the notation $a\lhd b$ to mean that $b$ is a unit for $a$, that is
to say, $ab=ba=a$. We start with a lemma.

\begin{lemma}
Let $A$ be a C*-algebra such that the property (I) of Theorem \ref{1} holds in $A$.
Let $e,f,\alpha,\beta\in A^+$ be such that $e$ is a contraction, and 
\[
\alpha\lhd e,\, \alpha\sim \beta\lhd f, \hbox{ and $f\sim f'\lhd e$ for some $f'\in A^+$.}
\]
Then for every $\delta>0$ there are $\alpha',e'\in A^+$ such that
\[\alpha'\lhd e'\lhd e, \quad\beta+f\sim \alpha'+e', \hbox{ and }\|\alpha-\alpha'\|<\delta.\] 
\end{lemma}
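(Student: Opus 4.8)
The plan is to reduce, by an exact transport, to a configuration in which $e$ is a common two-sided unit, to apply property (I) there, and finally to upgrade the resulting approximate equivalence to an exact one, absorbing the error into $\alpha'$. First I would fix $v,w\in A$ implementing the hypothesised equivalences: $\alpha=v^*v$, $\beta=vv^*$, $f=w^*w$, $f'=ww^*$. From $\beta\lhd f$ one gets $fv=v$, and from $\alpha\lhd e$ one gets $ve=v$; hence $V:=wv$ satisfies $V^*V=v^*fv=\alpha$ and $VV^*=w\beta w^*=:\beta_1$. Since $\beta_1\lhd f'$ and $f'\lhd e$ give $\beta_1\lhd e$, both $\alpha$ and $\beta_1$ have $e$ as a two-sided unit and $eV=Ve=V$. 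Writing $u$ for the partial isometry in the polar decomposition of $w$, the element $z_0:=u(\beta+f)^{1/2}$ lies in $A$ and satisfies $z_0^*z_0=\beta+f$ and $z_0z_0^*=\beta_1+f'$; this is an \emph{exact} equivalence $\beta+f\sim\beta_1+f'$ that moves the whole problem inside $\mathrm{Her}(e)$.

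The one essential use of the hypothesis comes next: since $e$ is a positive contraction with $eV=Ve=V$, property (I) applies to $x=V$ and yields $\alpha+e\sim_{ap}\beta_1+e$. This is a stable approximate unitary equivalence between the two pairs sitting under the common unit $e$, and it plays precisely the role that rank counting (weak cancellation) plays in the case of projections: it manufactures the equivalence between the complements of $\alpha$ and of $\beta_1$ in $\mathrm{Her}(e)$ that is not otherwise available.

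The endgame is to produce an exact equivalence $\beta+f\sim\alpha'+e'$ with $\alpha'\lhd e'\lhd e$ and $\|\alpha-\alpha'\|<\delta$. I would take $\alpha':=(\alpha-\delta_0)_+$ for small $\delta_0<\delta$, with unit $s:=g_{\delta_0}(\alpha)\lhd e$, and build $e':=s+\gamma$ where $\gamma\perp s$, $\gamma\lhd e$ and $[e']=[f']$; such a $\gamma$ should exist in $\mathrm{Her}(e)$, orthogonal to $s$, because $[\alpha]\le[f']\le[e]$. The equivalence $\beta_1+f'\sim\alpha'+e'$ would then be assembled from two pieces: the ``top'' equivalence $\alpha'\sim$ (a cut-down of $\beta_1$), coming from $\alpha\sim\beta_1$, and the complement equivalence $\gamma\sim_{ap}(f'-\beta_1)$, extracted from the property-(I) equivalence above. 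Proposition \ref{rordam1} would be used to turn the approximate complement equivalence into an exact Murray--von Neumann equivalence, the error being absorbed by the cut-down $\alpha'$; and to keep $\beta+f$ itself exact, I would compose the implementing element of $\beta_1+f'\sim\alpha'+e'$ with the partial isometry $u$ of the transport, verifying that the product remains in $A$.

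The main obstacle is this last step. Two features make it delicate. First, the units $f'$ and $e$ controlling $\beta_1$ and $\alpha$ are genuinely distinct, so property (I) cannot be invoked directly with unit $f'$; one must build a new unit $e'$ of Cuntz class $[f']$ around the perturbed $\alpha'$, which is where the relations $[\alpha]\le[f']\le[e]$ and Proposition \ref{rordam1} are needed. Second, exact Murray--von Neumann equivalence of positive elements is not transitive, so the exact transport and the approximate flip cannot simply be concatenated; the exactness of $\beta+f$ on the left must be preserved by composing with the genuine partial isometry $u$ rather than by passing through a limit. Controlling these two points simultaneously---constructing $\gamma$ and performing the approximate-to-exact upgrade without disturbing $\beta+f$---is the technical heart of the argument.
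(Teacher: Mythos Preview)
Your first two paragraphs match the paper's setup exactly: transport $\beta$ to $\beta_1:=u\beta u^*$ (the paper calls it $\alpha_1$) via the partial isometry $u$ in the polar decomposition of the element implementing $f\sim f'$, observe $\beta_1\lhd f'\lhd e$, note the exact equivalence $\beta+f\sim\beta_1+f'$ via $u(\beta+f)^{1/2}$, and apply property~(I) to get $\alpha+e\sim_{ap}\beta_1+e$. All of this is correct and is precisely what the paper does.

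The endgame is where you diverge, and where there is a real gap. Your plan is to cut $\alpha$ down to $(\alpha-\delta_0)_+$, manufacture a unit $e'=s+\gamma$ with $[e']=[f']$ by finding a suitable $\gamma\perp s$ inside $\mathrm{Her}(e)$, and then assemble the final equivalence from a ``top'' piece and a ``complement'' piece. But the existence of such a $\gamma$ is not established---the Cuntz inequalities $[\alpha]\le[f']\le[e]$ do not by themselves produce an element of the right Cuntz class orthogonal to a prescribed $s$ and having $e$ as a unit---and the ``complement equivalence'' you want to extract from $\alpha+e\sim_{ap}\beta_1+e$ is not formulated in a way that can be made precise (note that $f'-\beta_1$ need not be positive). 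You flag these as obstacles but do not overcome them.

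The paper's endgame avoids all of this with one idea you are missing: do not build $e'$ by hand; \emph{transport $f'$ together with $\beta_1$}. Apply Proposition~\ref{rordam1} once to $\beta_1+e\sim_{ap}\alpha+e$ to obtain $z$ with
\[
z^*z=(\beta_1+e-\delta')_+=\beta_1+(e-\delta')_+,\quad zz^*\le\alpha+e,\quad \|zz^*-(\alpha+e)\|<C\sqrt{\delta'}.
\]
Let $w_1$ be the partial isometry in the polar decomposition of $z$, and set $\alpha':=w_1\beta_1 w_1^*$ and $e':=w_1 f' w_1^*$. Since $f'\lhd e$ forces $f'\in\mathrm{Her}((e-\delta')_+)\subseteq\mathrm{Her}(z^*z)$, the map $c\mapsto w_1cw_1^*$ is a $*$-isomorphism on the relevant hereditary subalgebra, so $\beta_1\lhd f'$ becomes $\alpha'\lhd e'$ automatically, and $y:=w_1u(\beta+f)^{1/2}$ gives $y^*y=\beta+f$ and $yy^*=\alpha'+e'$ in one stroke. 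For the norm estimate, observe $(z^*z-1)_+=(\beta_1-\delta')_+$ (because $e\le 1$ acts as $1$ on $\beta_1$), hence $(zz^*-1)_+=w_1(\beta_1-\delta')_+w_1^*$, while $(zz^*-1)_+$ is close to $(\alpha+e-1)_+=\alpha$; thus $\alpha'=w_1\beta_1w_1^*$ is close to $\alpha$. No orthogonal decomposition, no hand-built $\gamma$, and no patching of two separate equivalences is required.
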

\begin{proof}
Since $f\sim f'$ there exists $x$ such that $f=x^*x$ and $xx^*=f'$. Let $x=w|x|$ be the polar decomposition of $x$
in the bidual of $A$. We have $wfw^*=f'$. Set $w\beta w^*=\alpha_1$. Then 
$\alpha_1\sim \alpha$, $\alpha_1\lhd e$, and $\alpha\lhd e$. Hence $\alpha_1+e\sim_{ap} \alpha+e$. 
By Proposition \ref{rordam1} this implies that for every
$\delta'>0$ there is $z\in A$ such that 
\begin{align}
&(\alpha_1+e-\delta')_+=z^*z, \quad zz^*\leq \alpha+e,\hbox{ and}\\
&\| zz^*-(\alpha+e)\|<C\sqrt{\delta'}. \label{z}
\end{align}
Let $z=w_1|z|$ be the polar decomposition of $z$ in the bidual of $A$.
Since $e$ is a unit for $\alpha_1$ we have $(\alpha_1+e-\delta')_+=\alpha_1+(e-\delta')_+$. It follows that the map
$c\mapsto w_1cw_1^*$, sends the elements of $\mathrm{Her}((e-\delta')_+)$ into $\mathrm{Her}(e)$.
By \eqref{z} if we let $\delta'\to 0$ then $(zz^*-1)_+$ can be made arbitarily close to  $(\alpha+e-1)_+$.
Since $(zz^*-1)_+=w_1(a_1-\delta')_+w_1^*$ and $(\alpha+e-1)_+=\alpha$, this means that we can choose $\delta'$
small enough so that $\|w_1\alpha_1w_1^*-\alpha\|<\delta$. Let $\alpha'=w_1\alpha_1w_1^*$, $e'=w_1f'w_1^*$, and $y=w_1w(\beta+f)^{1/2}$. Then $\beta +f=y^*y$ and $yy^*= \alpha'+e'$. 
\end{proof}

\begin{proof}[Proof of Theorem \ref{1}]
(II) $\Rightarrow$ (I). Let $\phi, \psi\colon C_0(0,1]\to A$ be the homomorphism such that 
\[
\phi(\mathrm{id})=\frac{1}{\|x\|^2+1}(x^*x+e)\hbox{ and } \psi(\mathrm{id})=\frac{1}{\|x\|^2+1}(xx^*+e).
\]
From the definition of the pseudometric $d_W$ we see $d_W(\Cu(\phi),\Cu(\psi))=\frac{1}{|x|^2+1}d_W(x^*x+e, xx^*+e)$. In order to prove that $x^*x+e$ is stably approximately unitarily equivalent to $xx^*+e$ it is enough to show that 
\[
d_W(x^*x+e, xx^*+e)=0.
\]
That is, $(x^*x+e-t)_+\sim_{Cu} (xx^*+e-t)_+$ for all $t\in \R$.

Using that  $e$ is a unit for $x^*x$ and $xx^*$ we deduce that 
\begin{align*}
(x^*x+e-t)_+=x^*x+(e-t)_+, \quad (xx^*+e-t)_+=xx^*+(e-t)_+,
\end{align*}
for $0\le t<1$.  Also, $x^*x(e-t)_+=x^*x(1-t)$ and $xx^*(e-t)_+=xx^*(1-t)$. It follows that $x^*x$ and $xx^*$ belong to the hereditary algebra generated by $(e-t)_+$. Therefore, 
\[
(x^*x+e-t)_+\sim_{Cu}(e-t)_+\sim_{Cu} (xx^*+e-t)_+, \hbox{ for }0\leq t<1.
\]
If $t\ge1$ then $(x^*x+e-t)_+=(x^*x+1-t)_+$ and $(xx^*+e-t)_+=(xx^*+1-t)_+$. Hence, $(x^*x+e-t)_+\sim_{Cu} (xx^*+e-t)_+$
for $t\geq 1$.

(I) $\Rightarrow$ (II).
Set $\phi(\mathrm{id})=a$ and $\psi(\mathrm{id})=b$. Let $r$ be such that $d_W(a,b)<r$.
Let $m\in \N$ be the number such that $mr\leq 1<(m+1)r$. Finally, let the sequences $(a_i)_{i=1}^{m+1}$, $(b_i)_{i=1}^{m+1}$
be defined as $a_i=\xi_{m-i+1}(a)$, $b_i=\xi_{m-i+1}(b)$ for $i=1,2,\dots,m+1$, where $\xi_i\in C_0(0,1]$
is such that $\mathds{1}_{(ir+\epsilon,1]}\le\xi_i\le\mathds{1}_{(ir,1]}$ and $\epsilon>0$ is chosen small enough so that 
$d_W(a,b)+2\epsilon<r$. 

The sequences $(a_i)_{i=1}^{m+1}$ and $(b_i)_{i=1}^{m+1}$ satisfy that 
\begin{align*}
 a_i\lhd a_{i+1}, \quad b_i\lhd b_{i+1}, \hbox{ for $i=1,\dots,m$},\\
a_i\sim d_i\lhd b_{i+1},\quad b_i\sim c_i\lhd a_{i+1},\hbox{ for $i=1,\dots,m$},
\end{align*}
for some positive elements $c_i$ and $d_i$. The first line follows trivially from the definition
of the elements $a_i$ and $b_i$. Let us prove the second line. From
$d_W(a,b)<r-2\epsilon$ we get
\[
e_{(m-i+1)r-\epsilon}(a)\cuntzle_{Cu} e_{(m-i)r+\epsilon}(b)\lhd b_{i+1}.
\]
By the definition of Cuntz comparison there exists $d\in A^+$ such that  
$e_{(m-i+1)r}(a)\sim d\lhd b_{i+1}$. Since $a_i$ is expressible by functional
calculus as a function of $e_{(m-i+1)r}(a)$, we get that there exists $d_i\in A^+$ such that 
$a_i\sim d_i\lhd b_{i+1}$. We reason similarly to get the existence of $c_i$.

Let us now show by induction on $n$, for $n=1,2,\dots,m$,  that there are sequences of elements
$(a_i')_{i=1}^n$ and $(b_i')_{i=1}^n$ such that
\begin{align}
 a_i'\lhd a_{i+1}', \quad b_i'\lhd b_{i+1}', &\hbox{ for }i=1,2,\dots n-1\label{kk1}\\
\|a_i-a_i'\|<\epsilon, &\hbox{ for $i$ odd, }i\leq n,\label{kk2}\\
\|b_i-b_i'\|<\epsilon, &\hbox{ for $i$ even, }i\leq n,\label{kk3}\\
\sum_{i=1}^n a_i' \sim &\sum_{i=1}^n b_i',\label{kk4}
\end{align}
and $a_n'=a_n$, $b_n'\lhd b_{n+1}$ if $n$ is odd, and $b_n'=b_n$, $a_n'\lhd a_{n+1}$ if $n$
is even.

Since $a_1\sim d_1\lhd b_2$, the induction hypothesis holds for $n=1$ taking $b_1'=d_1$.
Suppose the induction holds for $n$ and let us show that it also holds for $n+1$. 
Let us consider the case that $n$ is odd (the case that $n$ is even is dealt with similarly).
We set $b_{n+1}'=b_{n+1}$ and leave the sequence $(b_i')_{i=1}^n$ unchanged. We are
going to modify the squence $(a_i')_{i=1}^n$ in order to complete the induction step.
Let $\alpha=\sum_{i=1}^n a_i'$, $e=a_{n+2}$, $\beta=\sum_{i=1}^n b_i'$,
$f=b_{n+1}'$. Then the conditions of the previous lemma apply. We thus have that for every $\delta>0$
there are $\alpha'$ and $e'$, such that 
\[\alpha'\lhd e'\lhd a_{n+2}, \quad \|\alpha-\alpha'\|<\delta, \hbox{ and }\beta+f\sim \alpha'+e'. 
\]
It follows that $\beta\sim \alpha'$, and so $\alpha'=\sum_{i=1}^n a_i''$,
with $a_i''\lhd a_{i+1}''$. We remark that the elements  $a_i'$ are all in the C*-algebra generated
by $\alpha$ and the elements $a_i''$ are in the C*-algebra generated by $\alpha'$. In fact, 
\begin{align}
(\alpha-i)_+-(\alpha-(i-1))_+=a_i',\label{f1}\\
(\alpha'-i)_+-(\alpha'-(i-1))_+=a_i''.\label{f2}
\end{align}
Therefore, we may choose the number $\delta$ sufficiently
small so that $\|a_i-a_i''\|<\epsilon$ for all $i\leq n$. We now rename the sequence $(a_i'')_{i=1}^n$ as $(a_i')_{i=1}^n$
and set $a_{n+1}'=e'$. From $\beta+f\sim \alpha'+e'$ we get that $\sum_{i=1}^{n+1} b_i'\sim \sum_{i=1}^{n+1} a_i'$.
This completes the induction. 

Continuing the induction up to $n=m$ we find $(a_i')_{i=1}^m$
and $(b_i')_{i=1}^m$ that satisfy \eqref{kk1}-\eqref{kk4}.

For the last part of the proof we split the analysis in to cases, $m$ even and $m$ odd. 

Suppose that $m=2k+1$. We have
\begin{align}\label{mo}
\frac{\sum_{i=1}^{2k+1} a_i'}{2k+1}\sim \frac{\sum_{i=1}^{2k+1} b_i'}{2k+1}.
\end{align}
Let $a'$ denote the sum on the left side of the last equation, and $b'$  the sum on the right. 
Let us show that $\|a'-a\|<2r+2\epsilon$ and $\|b-b'\|<2r+2\epsilon$.
Since $a_i'\lhd a_{i+1}'$ for all $i$ and $\|a_i'\|\le 1$ for all $i$, we have $a_i'\le a_{i+1}'$ for all $i$. Hence, 
\begin{align*}
\frac{2\sum_{i=1}^ka_{2i-1}'+a_{2k+1}'}{2k+1}\le\frac{\sum_{i=1}^{2k+1} a_i'}{2k+1}\le \frac{a_1'+2\sum_{i=1}^ka_{2i+1}'}{2k+1}.
\end{align*}
Using that $\|a_i'-a_i\|<\epsilon$ for $i$ odd in the above inequalities we obtain
\begin{align*}
\frac{2\sum_{i=1}^ka_{2i-1}+a_{2k+1}}{2k+1}-\epsilon \le \frac{\sum_{i=1}^{2k+1} a_i'}{2k+1}\le \frac{a_1+2\sum_{i=1}^ka_{2i+1}}{2k+1}+\epsilon.
\end{align*}
It follows now from the inequalities
\begin{align*}
\frac{2\sum\limits_{i=1}^k\xi_{2i-1}(t)+\xi_{2k+1}(t)}{2k+1}\le t+2r+\epsilon,\quad
t-2r-\epsilon\le \frac{\xi_1(t)+2\sum\limits_{i=1}^k\xi_{2i+1}(t)}{2k+1}, 
\end{align*}
that 
\begin{align*}
a-2r-2\epsilon\le\frac{\sum_{i=1}^{2k+1} a_i'}{2k+1}\le a+2r+2\epsilon.
\end{align*}
Therefore $\|a-a'\|<2r+2\epsilon$.

Let us show that $\|b-b'\|<2r+2\epsilon$. Using that  $b_i'\leq b_{i+1}'$ for $i=1, 2, \ldots, 2k$, that $b_{2k+1}'\le b_{2k+2}$, and that $\|b_i'-b_i\|<\epsilon$ for all $i$ even, we obtain the inequalities
\begin{align*}
\frac{2\sum_{i=1}^kb_{2i}}{2k+1}-\epsilon\le \frac{\sum_{i=1}^{2k+1}b_i'}{2k+1}\le \frac{2\sum_{i=1}^{k}b_{2i}+b_{2k+2}}{2k+1}+\epsilon.
\end{align*}
It follows from the estimates
\begin{align*}
\frac{2\sum_{1}^k\xi_{2i}(t)}{2k+1}\ge t-2r-\epsilon,\quad \frac{\xi_0(t)+2\sum_{1}^k\xi_{2i}(t)}{2k+1}\le t+2r+\epsilon,
\end{align*}
that
\begin{align*}
b-2r-2\epsilon\le \frac{\sum_{i=1}^{2k+1}b_i'}{2k+1}\le b+2r+2\epsilon.
\end{align*}
Hence $\|b-b'\|<2r+2\epsilon$.

We have found $a',b'\in A^+$ such that $a'\sim b'$, $\|a'-a\|<2r+2\epsilon$ and $\|b-b'\|<2r+2\epsilon$.
Therfore $d_U(a,b)\leq 4r+4\epsilon$. Since $\epsilon>0$ is arbitrary the desired result follows.

For the case that $m=2k$ we take $a'=\frac{1}{2k}\sum_{i=1}^{2k} a_{i}'$ and $b'=\frac{1}{2k}\sum_{i=1}^{2k} b_{i}'$,
and we reason similarly to how we did in the odd case to obtain that 
$\|a'-a\|<2r+2\epsilon$ and $\|b-b'\|<2r+2\epsilon$. 
\end{proof}

\begin{corollary}\label{completeness} Let $A$ be a C*-algebra with the property (I) of Theorem \ref{1}. The following propositions hold true:

(i) If $a$ and $b$ are positive elements of $A$ such that $d_W(a,b)<r$, then for all $\epsilon>0$ there exists $b'\in A^+$ such that $\|a-b'\|<4r$ and $d_U(b,b')<\epsilon$.

(ii) The set of positive elements of $A$ is complete with respect to the pseudometric $d_U$.

\end{corollary}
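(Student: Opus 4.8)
The plan is to treat part (i) as the substantial statement and to deduce part (ii) from it by a standard completeness (telescoping) argument. The guiding idea for (i) is that, although $d_U$ is defined through unitaries in $(A\otimes\mathcal K)^\sim$—so that a naive conjugate $u^*bu$ of $b$ leaves the corner $A$ and cannot be used as the element $b'$—the Murray--von Neumann equivalences produced inside $A$ by the proof of Theorem \ref{1} can be implemented by partial isometries in $A^{**}$ whose adjoint action preserves $A$ on the relevant hereditary subalgebra. Transporting a cut-down of $b$ through such an equivalence keeps us inside $A$ while changing the Cuntz (hence the $d_U$) class by a controlled amount.

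Concretely, for (i) I would run the $(\mathrm{I})\Rightarrow(\mathrm{II})$ construction of Theorem \ref{1} on the pair $(a,b)$ to obtain $a'\in A^+$ with $\|a-a'\|<2r$ together with a Murray--von Neumann equivalence $a'\sim b_0$ in $A$, where $b_0$ is assembled from $b$. Writing the equivalence as $y^*y=b_0$, $yy^*=a'$ and taking the polar decomposition $y=w|y|$ in $A^{**}$, the map $c\mapsto wcw^*$ is a $*$-isomorphism $\mathrm{Her}(b_0)\to\mathrm{Her}(a')\subseteq A$ (this is exactly the device already used in the Lemma preceding the proof of Theorem \ref{1}). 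I would then set $b':=w(b-\epsilon)_+w^*\in A^+$, using that $(b-\epsilon)_+$ lies in $\mathrm{Her}(b_0)$. Since $\mathrm{Ad}(w)$ implements a Murray--von Neumann equivalence, $b'\sim(b-\epsilon)_+$, whence $d_U(b,b')\le\|b-(b-\epsilon)_+\|\le\epsilon$; and $\|a-b'\|\le\|a-a'\|+\|b_0-(b-\epsilon)_+\|$, the second term being controlled by the construction so that the total stays below $4r$.

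The step I expect to be the main obstacle is arranging $b_0$ so that $(b-\epsilon)_+$ genuinely belongs to $\mathrm{Her}(b_0)$ while simultaneously keeping $\|b_0-(b-\epsilon)_+\|<2r$. These two requirements pull against each other: an element norm-close to $b$ need not dominate $(b-\epsilon)_+$, and the averaging of spectral truncations that appears in Theorem \ref{1} only reaches down to threshold $\sim r$, so it captures $b$ only above that scale. The real content is thus to push the Murray--von Neumann matching to the small spectrum of $b$ (below $r$), using the Cuntz relations $e_{t+r}(b)\cuntzle_{Cu}e_t(a)$ and $e_{t+r}(a)\cuntzle_{Cu}e_t(b)$ supplied by $d_W(a,b)<r$ together with property (I) (and Proposition \ref{rordam1} to realise the resulting subequivalences as honest Murray--von Neumann equivalences of cut-downs). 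It is precisely this unavoidable mismatch at the bottom of the spectrum that accounts for the loss $\epsilon$, rather than an exact equality $d_U(b,b')=0$.

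For (ii), I would argue that the positive elements are $d_U$-complete by the usual replacement-of-representatives method, with part (i) as the engine. Given a $d_U$-Cauchy sequence $(a_n)$ in $A^+$, pass to a subsequence with $\sum_n d_U(a_n,a_{n+1})<\infty$; by Lemma \ref{continuous} this also gives $\sum_n d_W(a_n,a_{n+1})<\infty$. Set $b_1:=a_1$, and inductively, assuming $d_U(a_n,b_n)$ small, note $d_W(b_n,a_{n+1})\le d_U(b_n,a_n)+d_U(a_n,a_{n+1})$ is small, so part (i) applied to $(b_n,a_{n+1})$ yields $b_{n+1}\in A^+$ with $\|b_n-b_{n+1}\|$ summable and $d_U(a_{n+1},b_{n+1})$ as small as we wish. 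Then $(b_n)$ is norm-Cauchy with limit $b\in A^+$, and $d_U(a_n,b)\le d_U(a_n,b_n)+\|b_n-b\|\to 0$, so $b$ is the $d_U$-limit of the original sequence.
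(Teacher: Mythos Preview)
Your argument for part (ii) is correct and is exactly the telescoping argument the paper uses.

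For part (i), you have correctly located the obstacle but not overcome it, and the approach as written has a genuine gap. The element $b_0$ produced by the proof of Theorem \ref{1} is an average of spectral truncations $\xi_k(b)$ (together with small perturbations), so it is supported on the spectrum of $b$ above threshold roughly $r$. Hence $(b-\epsilon)_+\notin\mathrm{Her}(b_0)$ in general, and your map $c\mapsto wcw^*$ is not defined on $(b-\epsilon)_+$. Your suggested fix---invoking the Cuntz relations $e_{t+r}(b)\cuntzle_{Cu}e_t(a)$ at small $t$ together with Proposition \ref{rordam1}---does not close this: those relations yield subequivalences of individual cut-downs, not a single Murray--von Neumann equivalence whose domain hereditary subalgebra contains all of $(b-\epsilon)_+$ while the range stays $2r$-close to $a$.

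The paper takes a different route that sidesteps this issue entirely. After a density reduction one may assume $a,b\in\mathrm{Her}((c-\delta)_+)$ for a strictly positive $c$, so that there is a positive contraction $e\in A$ which is a genuine unit for $a$, $b$, and for the element $x$ produced by the proof of Theorem \ref{1} (that proof gives $\|a-x^*x\|<2r$ and $\|b-xx^*\|<2r$ with $x$ in the same hereditary subalgebra). Now apply property (I) \emph{directly} to the pair $(x,e)$: from $x^*x+e\sim_{ap}xx^*+e$ one gets a unitary $u\in(A\otimes\mathcal K)^\sim$ with both $\|u^*eu-e\|$ and $\|u^*x^*xu-xx^*\|$ small. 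Set $b':=eubu^*e$. Then $b'\in A$ because compression by $e\in A$ lands in the top-left corner of $A\otimes\mathcal K$, the chain $a\approx x^*x\approx uxx^*u^*\approx ubu^*\approx eubu^*e$ gives $\|a-b'\|<4r$, and $d_U(b,b')\leq\|u^*b'u-b\|$ is controlled by $\|u^*eu-e\|$. The key device is the local unit $e$: it, rather than a partial isometry tied to $b_0$, is what brings the unitary conjugate of $b$ back into $A$, so no control over $\mathrm{Her}(b_0)$ is required.
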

\begin{proof}
(i) We may assume without loss of generality that $a$ and $b$ are contractions.
We may also assume that $A$ is $\sigma$-unital by passing to
the subalgebra $\mathrm{Her}(a,b)$ if necessary (the property (I) holds for hereditary
subalgebras by Proposition \ref{classI}). Let $c\in A^+$ be strictly positive. By the
continuity of the pseudometrics $d_U$ and $d_W$ (see Lemma \ref{continuous}), it is enough
to prove the desired proposition assuming that $a$ and $b$ belong to a dense subset of $A^+$.
Thus, we may assume that $a,b\in \mathrm{Her}((c-\delta)_+)$ for some $\delta>0$.
From $d_W(a,b)<r$ and the proof of Theorem \ref{1} we get that there is 
$x\in \mathrm{Her}((c-\delta)_+)$ such that 
\[
\|a-x^*x\|<2r,\quad\|b-xx^*\|<2r.
\] 
Let $e\in A^+$ be a positive contraction that is a unit for the subalgebra $\mathrm{Her}((c-\delta)_+)$. 
Then $x^*x+e\sim_{ap} xx^*+e$. This implies that for all $\epsilon>0$ there is a unitary $u$ in $(A\otimes \mathcal{K})^\sim$ such that 
\[ 
\|u^*eu-e\|<\epsilon,\quad \|u^*x^*xu-xx^*\|<\epsilon.
\]
Set $eubu^*e=b'$. If we take $\epsilon$ small enough such that 
\[
\|a-x^*x\|<2r-\epsilon,\quad\|b-xx^*\|<2r-\epsilon,
\] 
we then have the following estimates:
\begin{align*}
& \|a-b'\|\le \|a-ubu^*\|<4r-2\epsilon+\|uxx^*u^*-x^*x\|<4r,\\
& \|u^*b'u-b\|\le \|u^*eubu^*eu-ebu^*eu\|+\|bu^*eu-be\|<2\epsilon.
\end{align*}
From here part (i) of the corollary follows.

(ii) Let $(c_i)_{i=1}^\infty$ be a sequence of positive elements of $A$ that is Cauchy with respect to the pseudometric $d_U$. In order to show that $(c_i)_{i=1}^\infty$ converges it is enough to show that it has a convergent
subsequence. We may assume, by passing to a subsequence if necessary, that $d_U(c_i,c_{i+1})<\frac{1}{2^i}$ for all $i\geq 1$. Using mathematical induction we will construct a new sequence $(c_i')_{i=1}^\infty$ such that 
\begin{align}\label{sequence}
\| c'_i-c'_{i+1}\|<\frac{1}{2^{i-3}}, \quad d_U(c_i, c_i')<\frac{1}{2^i},
\end{align}
for all $i$.

For $n=1$ we set $c_1=c_1'$. Suppose that we have constructed $c_i'$, for $i=1, 2,\ldots, n$, and let us construct $c_{n+1}'$. We have $d_U(c_{n+1},c_{n})<\frac{1}{2^n}$ and $d_U(c_{n}',c_{n})<\frac{1}{2^n}$
(by the induction hypothesis). Hence $d_U(c_{n}',c_{n+1})<\frac{1}{2^{n-1}}$, and so $d_W(c_n', c_{n+1})<\frac{1}{2^{n-1}}$ (by Lemma \eqref{continuous}). 
Applying part (i) of the corollary to $a=c_n'$ and $b=c_{n+1}$, we find a positive element $d$ such that 
\[
\|c_{n}'-d\|<\frac{1}{2^{n-3}},\quad d_U(c_{n+1}, d)<\frac{1}{2^{n+1}}.
\]
Setting $c_{n+1}'=d$ completes the induction.

By \eqref{sequence} the sequence $(c'_i)_{i=1}^\infty$ is a Cauchy sequence with respect to the norm of $A$. Hence, it converges to an element $c\in A^+$. Also by \eqref{sequence} we have that $d_U(c_i, c_i')<\frac{1}{2^i}$ for all $i$. Hence, $d_U(c_i,c)\le d_U(c_i, c_i')+d_U(c_i',c)\to 0$. That is, $(c_i)_{i=1}^\infty$ converges to $c$ in the
pseudometric $d_U$. Thus, $A^+$ is complete with respect to $d_U$.
\end{proof}

\subsection{Approximate existence theorem.}
Let $A$ be a C*-algebra and $h_A$ a strictly positive element of $A$. The main result of this subsection, Theorem \ref{existence} below, states 
that every morphism $\alpha\colon \Cu(C_0(0,1])\to \Cu(A)$ in the category $\CCu$ such that $\alpha([\mathrm{id}])\leq [h_A]$, may be 
approximated in the pseudometric $d_W$ by a morphism of the form $\Cu(\phi)$, with $\phi\colon C_0(0,1]\to A$ a C*-algebra homomorphism.

\begin{lemma}\label{cuntzorder}
Let $A$ be a C*-algebra. The following propositions hold true:

(i) If $a$ and $b$ are two positive elements of $A$ such that $a\cuntzle_{Cu} b$, then for every $\epsilon>0$ there is $b'\in M_2(A)^+$ 
such that $b'\sim_{Cu} b$ and
\[
\left\|
\begin{pmatrix}
a & 0\\
0 & 0
\end{pmatrix}-b'
\right\|<\epsilon.
\]

(ii) If $a$ and $b$ are two positive elements of $A\otimes \mathcal K$ such that $a\cuntzle_{Cu} b$ then for every $\epsilon>0$ there exists 
$b'\in (A\otimes \mathcal K)^+$ such that $b'\sim_{Cu} b$ and $\|a-b'\|<\epsilon$.
\end{lemma}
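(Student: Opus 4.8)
The plan is to prove (i) by writing down, for each small $\epsilon'\in(0,\epsilon)$, an explicit positive element of $M_2(A)$ that is Cuntz equivalent to $b$, carries $(a-\epsilon')_+$ in its top-left corner, and whose remaining entries can be driven to $0$ in norm; part (ii) will then be reduced to (i) applied inside the stable algebra $A\otimes\mathcal K$.

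For (i), note first that $\|\operatorname{diag}(a,0)-\operatorname{diag}((a-\epsilon')_+,0)\|\le\epsilon'$, so it suffices to find $b'\sim_{Cu}b$ within $\epsilon-\epsilon'$ of $\operatorname{diag}((a-\epsilon')_+,0)$. Since $a\cuntzle_{Cu}b$, I would pick $d\in A$ with $\|a-d^*bd\|<\epsilon'$ and apply Proposition \ref{rordam1} to $a$ and $x_0:=b^{1/2}d$ (for which $x_0^*x_0=d^*bd$), obtaining $x\in A$ with
\[
x^*x=(a-\epsilon')_+,\qquad c:=xx^*\le x_0x_0^*=b^{1/2}dd^*b^{1/2}\le\|d\|^2\,b.
\]
The essential gain over the bare definition of Cuntz subequivalence is the norm domination $c\le Mb$ with $M=\|d\|^2$. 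Now for a parameter $\mu>0$ set $X:=\left(\begin{smallmatrix}x^*\\ \mu b^{1/2}\end{smallmatrix}\right)$ and $b':=XX^*$, so that
\[
b'=\begin{pmatrix}(a-\epsilon')_+ & \mu\,x^*b^{1/2}\\[2pt] \mu\,b^{1/2}x & \mu^2 b\end{pmatrix}\in M_2(A)^+.
\]
Then $\|b'-\operatorname{diag}((a-\epsilon')_+,0)\|\le\mu\|x\|\,\|b\|^{1/2}+\mu^2\|b\|$, which is below $\epsilon-\epsilon'$ once $\mu$ is small. For the Cuntz class, $X^*X=xx^*+\mu^2b=c+\mu^2b$ satisfies $\mu^2b\le X^*X\le(M+\mu^2)b$, hence $b\cuntzle_{Cu}X^*X\cuntzle_{Cu}b$, i.e. $X^*X\sim_{Cu}b$; and $b'=XX^*\sim X^*X$ (Murray--von Neumann equivalence, realized by placing $X$ in the first column of a matrix in $M_2(A)$), so $b'\sim_{Cu}b$.

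The step I expect to be the crux is precisely the verification that $b'\sim_{Cu}b$ rather than merely $b'\cuntzle_{Cu}b$. The naive alternatives fail: an orthogonal sum $\operatorname{diag}((a-\epsilon')_+,\,\mathrm{small})$ would have class $[a]+[\mathrm{remainder}]$, and there is in general no Riesz-type complement realizing ``$[b]\ominus[a]$''; a plain Murray--von Neumann column (the case $\mu=1$) forces $b'\sim b$ and hence large off-diagonal and corner blocks unless $[a]$ is already close to $[b]$. The device that resolves this is the decoupling of ``Cuntz size'' from ``norm size'': the scalar $\mu$ shrinks the extra blocks of $XX^*$ to zero in norm, while the domination $c\le Mb$ guarantees that $X^*X=c+\mu^2b$ is squeezed between two positive multiples of $b$ and therefore keeps the full Cuntz class $[b]$.

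For (ii), I would apply (i) inside the stable algebra $B:=A\otimes\mathcal K$, obtaining $\widehat b\in M_2(B)^+$ with $[\widehat b]=[b]$ and $\|\widehat b-\operatorname{diag}(a,0)\|<\epsilon/2$. Fix an isomorphism $\theta\colon M_2(B)\xrightarrow{\;\cong\;}B$ coming from a unitary identification $\mathbb C^2\otimes H\cong H$; it has the form $\theta(\operatorname{diag}(\beta,0))=v\beta v^*$ for an isometry $v\in M(B)$. Put $b_1:=\theta(\widehat b)$, so that $[b_1]=[\theta(\operatorname{diag}(b,0))]=[vbv^*]=[b]$ and $\|b_1-vav^*\|<\epsilon/2$. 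The only remaining discrepancy is that $b_1$ is close to the corner-conjugate $vav^*$ of $a$ rather than to $a$ itself; this is the obstacle specific to (ii). It is removed by observing that $a\sim vav^*$ via $va^{1/2}\in B$, so $a\sim_{ap}vav^*$; by \cite[Remark 1.8]{thomsen2}, together with the stability of $B$, the elements $a$ and $vav^*$ are approximately unitarily equivalent in $B$, giving a unitary $u\in B^\sim$ with $\|u^*vav^*u-a\|<\epsilon/2$. Then $b':=u^*b_1u$ satisfies $[b']=[b_1]=[b]$ and $\|b'-a\|\le\|b_1-vav^*\|+\|u^*vav^*u-a\|<\epsilon$, as required.
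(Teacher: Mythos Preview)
Your proof of (i) is essentially the paper's: both build a rank-one column with entries $b^{1/2}d$ (or your $x$, obtained from it via Proposition~\ref{rordam1}) and $\mu b^{1/2}$, so that one product is the $2\times2$ matrix close to $\operatorname{diag}(a,0)$ and the other is squeezed between $\mu^2 b$ and $(M+\mu^2)b$, forcing Cuntz equivalence with $b$. The only cosmetic difference is that the paper invokes \cite[Lemma~2.2]{kirchberg-rordam} to obtain the exact equality $(a-\epsilon/2)_+=d^*bd$ at the outset, skipping your pass through Proposition~\ref{rordam1}.

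For (ii) the paper takes a shorter route. After assuming $A$ stable, it moves $b$ to a Murray--von Neumann equivalent element of $A$ (the top-left corner of $A\otimes\mathcal K$), approximates $a$ by $p_nap_n\in M_n(A)$, and then applies (i) inside $M_n(A)$ to land directly in $M_{2n}(A)\subset A\otimes\mathcal K$; no isomorphism $M_2(B)\cong B$ and no appeal to approximate unitary equivalence is needed. Your argument is correct but heavier: you apply (i) in $B=A\otimes\mathcal K$, push the result through a stability isomorphism $\theta\colon M_2(B)\to B$, and then invoke \cite[Remark~1.8]{thomsen2} together with stability of $B$ to conjugate away the discrepancy between $a$ and $vav^*$. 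The paper sidesteps this last manoeuvre by arranging that the $M_2$-doubling of (i) takes place inside a fixed matrix block, so that the top-left embedding already fixes $a$.
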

\begin{proof}
(i) Let $\epsilon>0$ be given. Since $a\cuntzle_{Cu} b$, by \cite[Lemma 2.2]{kirchberg-rordam} there exists $d\in A$ 
such that $(a-\epsilon/2)_+=d^*bd$. Consider the vector $c=(b^{\frac{1}{2}}d, \delta b^{\frac{1}{2}})$, where $\delta>0$. Then
\begin{align*}
cc^*=b^{\frac{1}{2}}dd^*b^{\frac{1}{2}}+\delta^2 b\quad\text{and}\quad
c^*c=
\begin{pmatrix}
(a-\epsilon/2)_+ & \delta d^*b\\
\delta bd & \delta^2 b
\end{pmatrix}.
\end{align*}
We may choose $\delta$  small enough such that 
\[
\left\|
\begin{pmatrix}
a & 0\\
0 & 0
\end{pmatrix}-c^*c
\right\|<\epsilon.
\]
Since $\delta^2b\leq cc^*\leq (\delta^2+\|d\|^2)b$, we have $cc^*\sim_{Cu} b$. Thus, the desired result follows letting $b'=c^*c$.

(ii) We may assume without loss of generality that $A$ is stable. This implies that for every $b\in (A\otimes \mathcal K)^+$ 
there is $b'\in A^+$ that is Murray-von Neumann equivalent to $b$, where $A$ is being identified with the top corner of 
$A\otimes \mathcal K$. Thus, we may assume without loss of generality that $b\in A^+$. Every positive element $a$ in 
$(A\otimes \mathcal K)^+$ is approximated by the elements $p_nap_n\in M_n(A)$ (with $p_n$ the unit of $M_n(A^\sim)$). Therefore, 
we may also assume without loss of generality that $a\in M_n(A)$ for some $n$. So we have $a,b\in M_n(A)^+$ for some $n$. 
Now the existence of  $b'\in M_{2n}(A)^+$ with the desired properties is guaranteed by part (i) of the lemma.
\end{proof}

\begin{lemma}\label{interpolation}
Let $A$ be a C*-algebra and let $(x_k)_{k=0}^{n}$ be elements of $\Cu(A)$ such that $x_{k+1}\ll x_k$ for all $k$. There is 
$a\in (A\otimes \mathcal K)^+$, with $\|a\|\leq 1$, such that $[a]=x_0$ and $x_{k+1}\ll[(a-k/n)_+]\ll x_k$ for $k=1,\ldots, n-1$. 
\end{lemma}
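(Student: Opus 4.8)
The plan is to reduce the lemma to the construction of a commuting ``nested'' family of positive contractions, and then to recover $a$ as their average. Concretely, I aim to produce positive contractions $d_0,d_1,\dots,d_{n-1}\in (A\otimes\mathcal K)^+$ with
\[
d_{k+1}\lhd d_k \ \ (k=0,\dots,n-2),\qquad [d_0]=x_0,\qquad x_{k+1}\ll [d_k]\ll x_k \ \ (k=1,\dots,n-1),
\]
and then set $a:=\tfrac1n\sum_{k=0}^{n-1}d_k$, which is automatically a contraction. The point of the unit relations $d_{k+1}\lhd d_k$ is that they force the family to commute and to decrease: from $d_kd_{k+1}=d_{k+1}$ one gets that $d_k$ and $d_{k+1}$ commute and $d_{k+1}=d_k^{1/2}d_{k+1}d_k^{1/2}\le d_k$, and transitively $d_jd_k=d_k$ for all $j<k$. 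Working inside the commutative C*-algebra generated by $d_0,\dots,d_{n-1}$ and viewing them as functions, at each point the value vector is of the form $(1,\dots,1,s,0,\dots,0)$ with $m$ leading ones and $s\in[0,1)$; hence $na=m+s$ there, and the open set $\{na>k\}$ coincides exactly with the support of $d_k$. Consequently $(a-k/n)_+\sim_{Cu}d_k$, so $[a]=[d_0]=x_0$ and $[(a-k/n)_+]=[d_k]$ realizes the required sandwich. This reduces the whole lemma to building the family.

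I build $d_0,\dots,d_{n-1}$ recursively, at each stage staying inside a hereditary subalgebra on which the previous element acts as a unit. Initialize $C_{-1}=A\otimes\mathcal K$. At stage $k$, given a hereditary subalgebra $C_{k-1}=\mathrm{Her}(c_{k-1})$ with $x_k\le[c_{k-1}]$, use the fact that every class $\le[c_{k-1}]$ is realized in $\mathrm{Her}(c_{k-1})$ (apply the characterization of $\cuntzle_{Cu}$ recorded after Proposition \ref{rordam1} to the cuts $(r-1/m)_+$ of a representative $r$ of $x_k$, and pass to the supremum) to choose $b_k\in C_{k-1}^+$ with $[b_k]=x_k$. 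Since $x_k=\sup_{\tau>0}[(b_k-\tau)_+]$ is the supremum of a $\ll$-increasing net and $x_{k+1}\ll x_k$, I may pick $0<\tau_k<s_k$ small enough that $x_{k+1}\ll[(b_k-\tau_k)_+]$ and $x_{k+1}\le[(b_k-s_k)_+]$. Let $g_k\colon[0,\infty)\to[0,1]$ vanish on $[0,\tau_k]$ and equal $1$ on $[s_k,\infty)$, and set $d_k:=g_k(b_k)$ and $C_k:=\mathrm{Her}((b_k-s_k)_+)$. Then $[d_k]=[(b_k-\tau_k)_+]$, which satisfies $x_{k+1}\ll[d_k]\ll x_k$; moreover $d_k$ acts as a unit on $C_k$, and $C_k\subseteq C_{k-1}$, while $x_{k+1}\le[c_k]$ with $c_k=(b_k-s_k)_+$, so the recursion continues. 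For $k=0$ I take $b_0$ with $[b_0]=x_0$ and choose $g_0$ strictly positive on $(0,\infty)$, so that $[d_0]=[b_0]=x_0$ exactly. Since $d_{k+1}=g_{k+1}(b_{k+1})$ with $b_{k+1}\in C_k$ and $g_{k+1}(0)=0$, we have $d_{k+1}\in C_k$, whence $d_kd_{k+1}=d_{k+1}$, i.e.\ $d_{k+1}\lhd d_k$; and more generally $d_k\in C_{k-1}\subseteq C_j$ for $j<k$ gives $d_jd_k=d_k$, so the entire family commutes and all unit relations hold.

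The main obstacle is precisely that an arbitrary far-below chain $x_0\gg x_1\gg\cdots\gg x_n$ cannot be realized by the cut-downs of a single element, since those form only the one chain intrinsic to that element; one genuinely needs several mutually commuting pieces with nested supports. This is what the recursion overcomes: each $d_k$ is manufactured from a representative living in the flat-top corner $C_{k-1}$ of its predecessor, which simultaneously guarantees the commuting and unit relations and, thanks to the room built into the far-below relation, keeps the corners large enough ($x_{k+1}\le[c_{k-1}]$) to host the next representative. The one point demanding care is maintaining the strictness of $\ll$ rather than mere $\le$ when selecting the cut levels $\tau_k$ and $s_k$; this is handled by exploiting that each cut $[(b_k-\tau)_+]$ is itself far below the cuts below it, so after one further shrinking of $\tau_k$ the inequality $x_{k+1}\le[(b_k-\tau)_+]$ is upgraded to $x_{k+1}\ll[(b_k-\tau_k)_+]$.
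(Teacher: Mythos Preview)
Your argument is correct and genuinely different from the paper's. The paper proceeds analytically: it invokes Lemma~\ref{cuntzorder} (ii) to build representatives $a'_0,\dots,a'_n$ of $x_0,\dots,x_n$ with $\|a'_k-a'_{k+1}\|<\epsilon$, then uses $d_W(a'_0,a'_k)<k\epsilon$ to interlace the cut-downs $[(a'_0-k\epsilon)_+]$ with the $x_k$, and finally rescales $a'_0$ by $1/(n\epsilon)$. Your route is algebraic: you manufacture commuting contractions $d_{k+1}\lhd d_k$ with prescribed Cuntz classes inside nested hereditary corners, and recover $a$ as their average, reading off $(a-k/n)_+\sim_{Cu}d_k$ from the resulting staircase shape in the commutative subalgebra. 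Your approach avoids Lemma~\ref{cuntzorder} entirely and is arguably more direct once the building blocks are in place; the paper's approach, by contrast, packages the approximation step into a reusable lemma and stays closer to the metric framework of the paper.

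One point deserves a firmer justification. Your parenthetical for ``every class $\le[c_{k-1}]$ is realized in $\mathrm{Her}(c_{k-1})$'' --- apply the $\cuntzle_{Cu}$ characterization to the cuts $(r-1/m)_+$ and ``pass to the supremum'' --- is too quick: knowing that each $[(r-1/m)_+]$ is hit by some $b_m\in\mathrm{Her}(c_{k-1})$ does not by itself produce a single $b$ with $[b]=x_k$. The statement is true, but the clean reason is that $\mathrm{Her}(c_{k-1})$ is a $\sigma$-unital hereditary subalgebra of the stable algebra $A\otimes\mathcal K$, hence is itself stable, so every class in $\Cu(\mathrm{Her}(c_{k-1}))$ is realized by a positive element of $\mathrm{Her}(c_{k-1})$; together with the fact that the image of $\Cu(\mathrm{Her}(c_{k-1}))\hookrightarrow\Cu(A)$ is closed under increasing suprema, this gives what you need. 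Alternatively, you can sidestep the issue entirely: you do not actually need $[b_k]=x_k$, only $x_{k+1}\ll[b_k]\ll x_k$, which follows immediately from taking $b_k\in\mathrm{Her}(c_{k-1})$ Murray--von Neumann equivalent to a single cut $(r-\epsilon)_+$ with $\epsilon$ small enough.
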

\begin{proof}
Let $\epsilon>0$. Let $a'_{n}\in (A\otimes \mathcal K)^+$ be such that $[a'_n]=x_n$ and $\|a'_n\|\leq \epsilon$.
Repeatedly applying Lemma \ref{cuntzorder} (ii),  we can find positive elements $(a'_k)_{i=0}^{n-1}$ such that $[a'_k]=x_k$ 
and $\|a'_k-a'_{k+1}\|<\epsilon$ for $k=1,\ldots, n-1$. For all $k$ we have $\|a'_0-a'_k\|<k\epsilon$. It follows from Lemma \ref{continuous} that $d_W(a_0',a_k')<k\epsilon$. Hence, 
\[
(a'_k-2k\epsilon)_+\cuntzle_{Cu} (a'_0-k\epsilon)_+\cuntzle_{Cu} a'_k.
\]
Since $x_{k+1}\ll x_k$ for all $k$, we can 
choose $\epsilon$ small enough such that
\begin{align*}
x_0=[a'_0]\geq x_1 \geq [(a'_0-\epsilon)_+] \geq x_2 &\geq [(a'_0-2\epsilon)_+] \geq \ldots \\
\ldots &\geq [(a'_0-(n-1)\epsilon)_+]\geq x_n.
\end{align*}
Set $a'_0/(n\epsilon)=a$. Then $[(a'_0-k\epsilon)_+]=[(a-k/n)_+]$ for all $k$. The lemma now follows by noticing that 
$\|a'_n\|\leq \epsilon$ and $\|a_0-a'_n\|<(n-1)\epsilon$ imply that $\|a\|\leq 1$.
\end{proof}

\begin{theorem}\label{existence}
Let $A$ be a C*-algebra and let $h_A$ be a strictly positive element of $A$. Let $\alpha\colon \Cu(C_0(0,1])\to \Cu(A)$ be 
a morphism in $\CCu$ such that $\alpha([\mathrm{id}])\le [h_A]$. Then for every $\epsilon>0$ there exists $\phi\colon C_0(0,1]\to A$ 
such that $d_W(\Cu(\phi),\alpha)<\epsilon$.
\end{theorem}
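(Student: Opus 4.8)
The plan is to approximate $\alpha$ by discretizing along the grid $t=k/n$, realizing the resulting finite chain of Cuntz classes by a single positive contraction via Lemma \ref{interpolation}, and then transporting that element into $A$ using the hypothesis $\alpha([\id])\leq[h_A]$. Since $d_W$ depends on a morphism $\beta$ only through the values $\beta([e_t])$, and since $[e_s]\wayb[e_t]$ in $\Cu(C_0(0,1])$ whenever $s>t$, it suffices to match $\alpha$ on a sufficiently fine grid. So I fix $\epsilon>0$, choose $n$ with $2/n<\epsilon$, first produce $a\in(A\otimes\mathcal K)^+$ whose associated morphism reproduces $\alpha$ up to $d_W$-error $2/n$, and finally replace $a$ by a Cuntz-subequivalent element of $A$ at an arbitrarily small further cost.

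For the construction, set $x_k:=\alpha([e_{k/n}])$ for $k=0,1,\dots,n$. Because $[e_{(k+1)/n}]\wayb[e_{k/n}]$ and morphisms in $\CCu$ preserve the far below relation, the chain satisfies $x_{k+1}\wayb x_k$ for $k=0,\dots,n-1$ (with $x_n=\alpha(0)=0$). Applying Lemma \ref{interpolation} to this chain yields $a\in(A\otimes\mathcal K)^+$ with $\|a\|\leq1$, $[a]=x_0$, and $x_{k+1}\wayb[(a-k/n)_+]\wayb x_k$ for $k=1,\dots,n-1$. Let $\psi\colon C_0(0,1]\to A\otimes\mathcal K$ be the homomorphism with $\psi(\id)=a$, so that $\Cu(\psi)([e_t])=[(a-t)_+]$.

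Next I would verify $d_W(\Cu(\psi),\alpha)\leq 2/n$ directly from the definition. For the inequality $\Cu(\psi)([e_{t+2/n}])\leq\alpha([e_t])$, writing $k=\lfloor tn\rfloor$ one has $t+2/n\geq(k+2)/n>t$, so
\[
[(a-t-2/n)_+]\leq[(a-(k+2)/n)_+]\leq x_{k+2}=\alpha([e_{(k+2)/n}])\leq\alpha([e_t]),
\]
using the right sandwich bound and the monotonicity of $s\mapsto\alpha([e_s])$. For the reverse inequality $\alpha([e_{t+2/n}])\leq\Cu(\psi)([e_t])$, writing $k=\lceil tn\rceil$ one has $t\leq k/n$ and $t+2/n>(k+1)/n$, so
\[
\alpha([e_{t+2/n}])\leq\alpha([e_{(k+1)/n}])=x_{k+1}\leq[(a-k/n)_+]\leq[(a-t)_+],
\]
using the left sandwich bound. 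The finitely many boundary indices are immediate, since $(a-s)_+=0$ and $\alpha([e_s])=0$ for $s\geq1$, while $[a]=x_0$ controls the top of the range.

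Finally I would transport $a$ into $A$. By hypothesis $[a]=x_0=\alpha([\id])\leq[h_A]$, so $a\cuntzle_{Cu}h_A$; since $h_A$ is strictly positive we have $\mathrm{Her}(h_A)=A$, and the description of Cuntz comparison recalled after Proposition \ref{rordam1} provides, for any $\delta>0$, an element $a'\in A^+$ with $(a-\delta)_+\sim a'$ and $\|a'\|\leq1$. Let $\phi\colon C_0(0,1]\to A$ be given by $\phi(\id)=a'$. As Murray--von Neumann equivalence is compatible with continuous functional calculus vanishing at $0$, we get $(a'-t)_+\sim(a-\delta-t)_+$, so $\Cu(\phi)([e_t])=[(a-t-\delta)_+]$ is a $\delta$-shift of $\Cu(\psi)$ and hence $d_W(\Cu(\phi),\Cu(\psi))\leq\delta$. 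Combining this with the previous paragraph through the triangle inequality for the pseudometric $d_W$ gives $d_W(\Cu(\phi),\alpha)\leq 2/n+\delta<\epsilon$ for $\delta$ small. I expect the main obstacle to be the bookkeeping in the $d_W$-estimate—keeping the far below sandwich inequalities pointed the correct way across the grid and handling the boundary indices—rather than the interpolation or the realization step, both of which are delivered by the preceding lemmas together with the strict positivity of $h_A$.
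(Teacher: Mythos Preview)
Your proposal is correct and follows essentially the same route as the paper: discretize $\alpha$ on a uniform grid, invoke Lemma~\ref{interpolation} to realize the resulting $\ll$-chain by a single positive contraction $a\in(A\otimes\mathcal K)^+$, and then use $[a]\leq[h_A]$ together with the Murray--von Neumann description of Cuntz comparison (equivalently \cite[Lemma~2.2]{kirchberg-rordam}) to replace $(a-\delta)_+$ by an equivalent element $a'\in A$. The only cosmetic differences are that the paper uses the dyadic grid $k/2^n$ and absorbs the $\delta$-shift \emph{before} the $d_W$ computation (by first choosing $\delta$ so small that the sandwich inequalities persist for $(a-\delta)_+$), whereas you handle it afterwards via the triangle inequality; both are fine.
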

\begin{proof}
Let $\epsilon>0$ be given and let $n$ be such that $1/2^{n-1}<\epsilon$. Set $\alpha([e_t])=x_t$ for $t\in [0,1]$. By Lemma 
\ref{interpolation}, we can find $a\in (A\otimes \mathcal K)^+$ such that $\|a\|\leq 1$, $[a]=x_0$, and 
\begin{align}\label{int}
x_{(k+1)/2^n}\ll [(a-k/2^n)_+]\ll x_{k/2^n}
\end{align}
for $k=1,\ldots, 2^n-1$. Let $\delta>0$ be such that \eqref{int} still holds after replacing $a$ by $(a-\delta)_+$. This is 
possible since
\[
[(a-k/2^n)_+]=\sup_{\delta>0} [(a-\delta-k/2^n)_+].
\]
We have $[a]=\alpha([\mathrm{id}])\le [h_A]$. By \cite[Lemma 2.2]{kirchberg-rordam}, there exists $d\in A\otimes\mathcal K$ such that $(a-\delta)_+=dh_Ad^*$. 
Set $h_A^{1/2}d^*dh_A^{1/2}=a'$. Then  $a'$ is in $A^+$ and is Murray-von Neumann equivalent to $(a-\delta)_+$. It follows that $(a'-t)_+$ 
is Murray-von Neumann equivalent to $(a-\delta-t)_+$ for all $t\in [0,1]$. Therefore, $[(a'-k/2^n)_+]=[(a-\delta-k/2^n)_+]$ 
for $k=1,\ldots, 2^n-1$. So we have found a positive element $a'$ in $A^+$ such that
\begin{align*}
x_{(k+1)/2^n}\ll [(a'-k/2^n)_+]\ll x_{k/2^n}
\end{align*}
for $k=1,\ldots, 2^n-1$. Notice also that $\|a'\|=\|(a-\delta)_+\|<1$.

Let $\phi\colon C_0(0,1]\to A$ be such that $\phi(\mathrm{id})=a'$. Then 
\begin{align*}
\Cu(\phi)([e_{k/2^n}])\le \alpha([e_{k/2^n}])\,\text{ and }\,\alpha([e_{(k+1)/2^n}])\le \Cu(\phi)([e_{k/2^n}]).
\end{align*}
Any interval of length $1/2^{n-1}$ contains an interval of the form $(k/2^n, (k+1)/2^n)$ for some $k$. Thus, for 
every $t\in [0,1]$ there exists $k$ such that $(k/2^n, (k+1)/2^n)\subseteq (t, t+1/2^{n-1})$. It follows that
\begin{align*}
\Cu(\phi)([e_{t+1/2^{n-1}}])\le \Cu(\phi)([e_{k/2^n}])\le \alpha([e_{k/2^n}])\le \alpha([e_t])
\end{align*}
and
\begin{align*}
\alpha([e_{t+1/2^{n-1}}])\le \alpha([e_{(k+1)/2^n}])\le \Cu(\phi)([e_{k/2^n}])\le \Cu(\phi)([e_t]).
\end{align*}
These inequalities imply that  $d_W(\Cu(\phi),\alpha)\le 1/2^{n-1}<\epsilon$.
\end{proof}

\subsection{Weak cancellation in $\Cu(A)$}
\begin{proposition}\label{cancellation}
 Suppose that $(A\otimes \mathcal K)^\sim$ has the property (I) of Theorem \ref{1}. Then 
$Cu(A)$ has weak cancellation.
\end{proposition}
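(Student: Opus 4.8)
The plan is to reproduce Rørdam and Winter's proof that stable rank one implies weak cancellation (\cite[Theorem 4.3]{rordam-winter}), substituting property (I) of $(A\otimes\mathcal K)^\sim$ for the stable rank one hypothesis. Since $\Cu(A)=\Cu(A\otimes\mathcal K)$, and since property (I) passes from $(A\otimes\mathcal K)^\sim$ to $A\otimes\mathcal K$ and to its hereditary subalgebras (Proposition \ref{classI}), I would work throughout in the stable algebra $B:=A\otimes\mathcal K$, drawing the units $e$ demanded by property (I) from the unitization $B^\sim$ — this is precisely why the hypothesis is imposed on $(A\otimes\mathcal K)^\sim$ rather than on $A$. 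So suppose $x+z\ll y+z$ in $\Cu(B)$, pick $a,b,c\in B^+$ representing $x,y,z$, and, using stability, realize the two sums as genuine orthogonal sums $a\oplus c$ and $b\oplus c'$ with $c'\sim c$ and $b\perp c'$. Because $x=\sup_{\mu>0}[(a-\mu)_+]$ and morphisms preserve suprema of increasing sequences, it suffices to prove $[(a-\mu)_+]\le y$ for every $\mu>0$.

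The first genuine step is to extract a gap from the far-below hypothesis. Writing $c_n:=(c-1/n)_+$, axiom (2) of $\CCu$ gives that $y+z=\sup_n\big(y+[c_n]\big)$ is an increasing supremum, so the definition of the far-below relation applied to $x+z\ll y+z$ produces an index $n$ with
\[
x+z\le y+[c_n].
\]
Setting $c'':=c_n$ we have $[c'']\ll[c]$, and in terms of positive elements $a\oplus c\cuntzle_{Cu} b\oplus c''$. The content of this step is that the copy of $c$ on the right has been shrunk to $c''$, so that $c''\cuntzle_{Cu}(c-\nu)_+$ for all small $\nu>0$; this is the room that makes the forthcoming cancellation possible.

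The technical heart is a cancellation lemma powered by property (I): that $a\oplus c\cuntzle_{Cu} b\oplus c''$ together with $[c'']\ll[c]$ forces $(a-\mu)_+\cuntzle_{Cu} b$. I would prove this through the analogue of the Rørdam–Winter fact that Cuntz comparison is implemented by unitaries: if $p\cuntzle_{Cu} q$ in $B$, then for each $\mu>0$ there is a unitary $u\in B^\sim$ (or in $(B\otimes\mathcal K)^\sim$) with $u(p-\mu)_+u^*\in\mathrm{Her}(q)$. To obtain it, fix $w\in B$ with $(p-\mu/2)_+=w^*w$ and $ww^*\in\mathrm{Her}(q)$, and choose a positive contraction $e\in B^\sim$ that is a two-sided unit for $w$; then $ew=we=w$ gives $e\,w^*w=w^*w$ and $e\,ww^*=ww^*$, so $e$ is a common unit for both, and property (I) yields $w^*w+e\sim_{ap}ww^*+e$. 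Exactly as in the rotation argument used in the proof of Theorem \ref{1}, Proposition \ref{rordam1} then converts this approximate Murray–von Neumann equivalence into a partial isometry (dilated to a stable unitary $u$) carrying a cutdown of $(p-\mu)_+$ into $\mathrm{Her}(q)$. Applying this with $p=a\oplus c$, $q=b\oplus c''$ and using the orthogonal splitting $\mathrm{Her}(b\oplus c'')=\mathrm{Her}(b)\oplus\mathrm{Her}(c'')$, the gap $[c'']\ll[c]$ is what allows the $(c-\mu)_+$–part of the source to be rotated off against, and absorbed by, the $c''$–summand of the target, leaving $(a-\mu)_+$ inside $\mathrm{Her}(b)$; taking the supremum over $\mu$ then gives $x\le y$. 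I expect this final rotation-and-cancellation to be the main obstacle: property (I) delivers only approximate (stable) unitary equivalence, so the entire argument must be run up to arbitrarily small errors, repeatedly invoking Proposition \ref{rordam1} to pass from approximate estimates back to exact equivalences landing in the correct hereditary subalgebra, and some care is needed to choose the unit $e$ so that the rotation stays localized enough to keep the $a$–part within $\mathrm{Her}(b)$.
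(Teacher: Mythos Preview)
Your initial manoeuvre---passing to the stable algebra, choosing orthogonal representatives, and using the far-below relation to replace $c$ on the right by some $c''$ with $[c'']\ll[c]$---is correct and is exactly how the paper begins. The paper then produces orthogonal $a_1,c_1\in\mathrm{Her}((b-\delta)_++(c-\delta)_+)$ with $a_1\sim(a-\epsilon)_+$ and $c_1\sim(c-\epsilon)_+$, which is the same information you would extract.

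The gap is in your cancellation step. Your intermediate lemma (Cuntz comparison implemented by stable unitaries, using property~(I) applied to the element $w$ with $(p-\mu/2)_+=w^*w$, $ww^*\in\mathrm{Her}(q)$) is not where the difficulty lies; indeed $w^*w\sim ww^*$ already gives stable approximate unitary equivalence of $w^*w$ and $ww^*$ in any C*-algebra, so at best property~(I) is buying you that the unitary also approximately fixes the unit $e$. Even granting your lemma, a unitary carrying $(a-\mu)_+\oplus(c-\mu)_+$ into $\mathrm{Her}(b)\oplus\mathrm{Her}(c'')$ has no reason to respect the direct sum, and you offer no mechanism by which the gap $[c'']\ll[c]$ forces the $a$-part to land in $\mathrm{Her}(b)$. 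You flag this as ``the main obstacle'' but give no strategy; this is precisely the heart of the proof, and property~(I) has not yet been genuinely invoked.

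The paper's argument is quite different and avoids this obstacle entirely. Having placed $a_1,c_1$ inside $\mathrm{Her}((b-\delta)_++(c-\delta)_+)$, one picks $g\in C_0(0,1]$ with $g\equiv 1$ on $[\delta-\epsilon,1]$ and $g\equiv 0$ on $[0,\delta/2]$, so that $g(b)+g((c-\epsilon)_+)$ is a unit for $a_1$ and $c_1$. Taking $x$ with $x^*x=g((c-\epsilon)_+)$ and $xx^*=g(c_1)$, one sets
\[
w=x+\sqrt{1-\bigl(g(b)+x^*x\bigr)}\in(A\otimes\mathcal K)^\sim.
\]
A short computation gives $w^*w=1-g(b)$ and $a_1w=0$. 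Now property~(I) is applied in $(A\otimes\mathcal K)^\sim$ with $e=1$: from $w^*w+1\sim_{ap}ww^*+1$ one obtains $g(b)=1-w^*w\sim_{ap}1-ww^*=:\tilde b$, hence $\tilde b\sim_{Cu}g(b)\cuntzle_{Cu}b$. Since $a_1w=0$ forces $a_1\tilde b=a_1$, we get $a_1\cuntzle_{Cu}\tilde b\cuntzle_{Cu}b$, and letting $\epsilon\to 0$ gives $[a]\le[b]$. The point is that property~(I) is used once, on a carefully built element of the \emph{unitization} whose ``complement'' $1-w^*w$ is exactly $g(b)$; this sidesteps any need to control how a unitary interacts with a direct sum decomposition.
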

\begin{proof}
 Suppose that $[a]+[c]\wayb [b]+[c]$ for $[a]$, $[b]$, and $[c]$ in $\Cu(A)$. Let us choose
$a$, $b$, and $c$, such that $ac=bc=0$. Taking supremum over $\delta>0$ in $[(b-\delta)_+]+[(c-\delta)_+]$
we get that $[a]+[c]\le [(b-\delta)_+]+[(c-\delta)_+]$ for some $\delta>0$. 
Hence, for every $\epsilon>0$ there are $a_1$ and $c_1$ in $(A\otimes \mathcal K)^+$ such that
\begin{align*}
a_1+c_1 \in \mathrm{Her}&((b-\delta)_+ +(c-\delta)_+),\\
a_1\sim (a-\epsilon)_+,& \quad c_1\sim (c-\epsilon)_+, \hbox{ and }a_1c_1=0.
\end{align*}
We assume that $\epsilon<\delta/2$. Let us show that $a_1$ is Cuntz smaller than $b$. 

Let $g\in C_0(0,1]$ be such that $0\le g(t)\le 1$, $g(t)=1$ for $t\geq \delta-\epsilon$ and $g(t)=0$ for $t\leq \delta/2$.  
Then $g((c-\epsilon)_+)+g(b)$ is a unit for $a_1$ and $c_1$. 

We have $g(c_1)\sim g((c-\epsilon)_+)$. Let  $x$ be such that $g(c_1)=xx^*$ and $g((c-\epsilon)_+)=x^*x$.
From  $(g(b)+x^*x)xx^*=xx^*$ we deduce that  $(1-(g(b)+x^*x))x=0$. Let $w\in (A\otimes \mathcal K)^\sim$
be given by
\[
w=x+\sqrt{1-(g(b)+x^*x)}.
\]
Then we have $w^*w=1-g(b)$. From $a_1g(c_1)=0$ and $g(c_1)=xx^*$ we get that  $a_1x=0$.
Also $a_1(1-(g(b)+x^*x))=0$. We conclude that $aw=0$.
Let $\tilde b$ be defined by $ww^*=1-\tilde b$. Since we have assumed that the property (I) holds in $(A\otimes \mathcal K)^\sim$,
we have $w^*w+1\sim_{ap} ww^*+1$. From this we deduce $1-w^*w\sim_{ap} 1-ww^*$, i.e., $g(b)\sim_{ap} \tilde b$.
So $\tilde b\sim_{Cu} g(b)\cuntzle_{Cu} b$.  On the other hand, from 
$a_1w=0$ we deduce that $a_1\tilde b=a_1$, and so $a_1\leq \|a_1\|\tilde b$. Hence $a_1\cuntzle_{Cu}\tilde b\cuntzle b$.

We have $[(a-\epsilon)_+]=[a_1]\leq [b]$ for all $\epsilon>0$. Letting $\epsilon\to 0$ we get $[a]\leq [b]$
as desired.
\end{proof}
\subsection{Proof of Theorem \ref{2}}
\begin{proof}[Proof of Theorem \ref{2}]
The uniqueness of the homomorphism $\phi$ is clear by Theorem \ref{1}. Let us prove its existence.
By Theorem \ref{existence}, for every $n$ there exists $\phi_n\colon C_0(0,1]\to A$ such that 
$d_W(\mathrm{Cu}(\phi_n),\alpha)<1/2^{n+2}$. It follows from Theorem \ref{1} that
\begin{align*}
d_U(\phi_n(\mathrm{id}), \phi_{n+1}(\mathrm{id}))\le 4d_W(\mathrm{Cu}(\phi_n), \mathrm{Cu}(\phi_{n+1}))<1/2^n.
\end{align*}
This implies that $(\phi_n(\mathrm{id}))_n$ is a Cauchy sequence with respect to the pseudometric $d_U$. By Corollary \ref{completeness},  $A^+$ is complete with respect to $d_U$. Hence, there exists $\phi\colon C_0(0,1]\to A$ such that 
$d_U(\phi(\mathrm{id}), \phi_n(\mathrm{id}))\to 0$.
We have,
\begin{align*}
d_W(\mathrm{Cu}(\phi),\alpha) &\le d_W(\mathrm{Cu}(\phi), \mathrm{Cu}(\phi_n))+d_W(\mathrm{Cu}(\phi_n), \alpha),\\
& \le d_U(\mathrm{Cu}(\phi), \mathrm{Cu}(\phi_n))+d_W(\mathrm{Cu}(\phi_n), \alpha)\to 0
\end{align*}
So $d_W(\mathrm{Cu}(\phi),\alpha)=0$. By Propositions and Proposition \ref{metric}, $d_W$ is a metric. Therefore $\mathrm{Cu}(\phi)=\alpha$.
\end{proof}

\section{Examples and counterexamples}
\subsection{Algebras with the property (I)} 
The following proposition provides us with examples of C*-algebras with the property (I) of Theorem \ref{1}.
\begin{proposition} \label{classI}
The following propositions hold true.

(i) If $A$ is a C*-algebra of stable rank 1 then (I) holds in $A$.

(ii) If $X$ is a locally compact Hausdorff space such that  $\dim X\leq 2$ and $\check H^2(X)=0$, then 
(I) holds in $(C_0(X)\otimes \mathcal K)^\sim$.

(iii) If (I) holds in $A$ it also holds in  every hereditary subalgebra and every quotient of 
$A$.

(vi) If $A\cong \varinjlim A_i$ and (I) holds in the C*-algebras $A_i$ then
it  also holds in $A$.
\end{proposition}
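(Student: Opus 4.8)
The plan is to dispatch the two permanence statements (iii) and (iv) first, deduce (i) directly from Theorem \ref{1}, and then spend the effort on (ii). A device I will use repeatedly is the following \emph{orthogonal addition} principle: if $a\sim_{ap}b$ in a C*-algebra and $c$ is a positive element orthogonal to both $a$ and $b$, then $a+c\sim_{ap}b+c$. Indeed, if $w_n^*w_n\to a$ and $w_nw_n^*\to b$ then $\|c^{1/2}w_n\|^2=\|c^{1/2}w_nw_n^*c^{1/2}\|\to\|c^{1/2}bc^{1/2}\|=0$ and likewise $\|w_nc^{1/2}\|\to0$, so $v_n:=w_n+c^{1/2}$ satisfies $v_n^*v_n\to a+c$ and $v_nv_n^*\to b+c$. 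I also use freely that $\sim_{ap}$ (equivalently, stable approximate unitary equivalence) is preserved by $*$-homomorphisms and is closed under norm-convergence of both of its endpoints.

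For (iii), the hereditary case is immediate: if $x,e$ lie in a hereditary subalgebra $B\subseteq A$ and (I) holds in $A$, then $x^*x+e\sim_{ap}xx^*+e$ in $A$, and since both elements lie in $B^+$, Corollary \ref{mvnher} transports the equivalence into $B$. For a quotient $\pi\colon A\to A/J$ and a pair $\bar x,\bar e$ as in (I), lift $\bar e$ to a positive contraction $e_0$ and $\bar x$ to some $x_0\in A$, and use two continuous functions $0\le g_0\le g_1\le1$ with $g_1g_0=g_0$, $g_i(0)=0$ and $g_i(1)=1$ to build the lifted pair $x=g_0(e_0)x_0g_0(e_0)$, $e'=g_1(e_0)$, which satisfies the \emph{exact} relation $e'x=xe'=x$; since $\bar e\bar x=\bar x$ puts $\bar x$ into the eigenvalue-$1$ part of $\bar e$, we have $g_0(\bar e)\bar x=\bar x$, so $\pi(x)=\bar x$. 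Applying (I) in $A$ and then $\pi$ gives the equivalence for $(\bar x,g_1(\bar e))$; replacing $\bar e$ beforehand by a spectral truncation $f_\delta(\bar e)$ (with $f_\delta(\bar e)\to\bar e$ and a spectral gap just below $1$, so that $g_1$ may be taken equal to the identity on its spectrum) removes the discrepancy between $g_1(\bar e)$ and $\bar e$, and a limit in $\delta$ yields (I) in $A/J$. Part (iv) is proved the same way: approximate $x$ and (a truncation of) $e$ by elements of some $A_i$, form the exactly-related pair in $A_i$ by the same $g_0,g_1$ recipe, apply (I) in $A_i$, and pass to the limit using the closedness of $\sim_{ap}$. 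Part (i), finally, is a one-line consequence of Theorem \ref{1}: Ciuperca and Elliott proved that in a stable rank $1$ algebra $d_W(\Cu(\phi),\Cu(\psi))=0$ forces $\phi$ and $\psi$ to be approximately unitarily equivalent, which for stable rank $1$ coincides with stable approximate unitary equivalence; thus (II) of Theorem \ref{1} holds, whence so does (I).

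Part (ii) is the substantial case. Its conceptual core is a fibrewise eigenvalue computation: if $P,Q$ are matrices with $Q$ a positive contraction and $QP=PQ=P$, then with respect to the splitting into the $1$-eigenspace $E_1$ of $Q$ and its complement one has $P=P_0\oplus0$ and $Q=1\oplus Q_0$, whence $P^*P+Q=(P_0^*P_0+1)\oplus Q_0$ and $PP^*+Q=(P_0P_0^*+1)\oplus Q_0$; as $P_0^*P_0$ and $P_0P_0^*$ have identical spectra, so do $P^*P+Q$ and $PP^*+Q$. Applied pointwise, this shows that for $x,e\in M_n(C_0(X))$ the positive elements $x^*x+e$ and $xx^*+e$ have the same eigenvalue functions on $X$, so Thomsen's theorem makes them approximately unitarily equivalent, hence $\sim_{ap}$. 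To reach the stated algebra I would use $(C_0(X)\otimes\mathcal K)^\sim=\varinjlim_n\big(M_n(C_0(X))\big)^\sim$ together with (iv) to reduce to $\big(M_n(C_0(X))\big)^\sim$. For a pair $(x,e)$ there whose unit $e$ has scalar part $\mu\cdot1<1$ at infinity, a cutoff $g_1$ (vanishing near $\mu$, with $g_1\le\mathrm{id}$ and $g_1(1)=1$) places $x^*x+g_1(e)$ and $xx^*+g_1(e)$ inside the ideal $M_n(C_0(X))$, where the Thomsen step applies, and the orthogonal-addition principle with $c=e-g_1(e)$ restores $e$.

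The step I expect to be genuinely delicate is the remaining possibility $\mu=1$, in which the adjoined unit truly participates: here $e=1-|b|$ with $|b|\in M_n(C_0(X))$, the element $x$ need not vanish at infinity, and no unit of $x$ can be pushed into the ideal, so $x^*x+e$ and $xx^*+e$ genuinely fail to lie in $M_n(C_0(X))$. Since both take the same scalar value $c\cdot1$ at infinity and approximate unitary equivalence is translation invariant, I would compare instead the self-adjoint elements $x^*x+e-c\cdot1$ and $xx^*+e-c\cdot1$ of $M_n(C_0(X))$, which still carry equal eigenvalue functions, and invoke Thomsen's classification in its form for self-adjoint (not merely positive) elements. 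Verifying that this self-adjoint form is available, and carrying the adjoined unit correctly through the stabilization, is the heart of the matter and the place where the hypotheses $\dim X\le2$, $\check H^2(X)=0$ are really used.
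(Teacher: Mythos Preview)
Your overall strategy is sound and in places cleaner than the paper's, but there is one genuine gap and one muddled explanation.

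For (iii) and (iv) you use exact lifts via the $g_0,g_1$ functional-calculus trick together with the norm-closedness of $\sim_{ap}$; the paper instead argues approximately, proving a continuity claim (if $\|x(1-e)\|,\|(1-e)x\|$ are small then $d_W(x^*x+e,xx^*+e)$ is small) and then invoking the inequality $d_U\le 4d_W$ from Theorem~\ref{1}. Your route avoids that appeal to Theorem~\ref{1}, but your ``spectral truncation $f_\delta(\bar e)$ with a spectral gap below $1$'' cannot be arranged when $\sigma(\bar e)$ accumulates at $1$ from below. What actually makes your argument work is simpler: since any continuous $g$ with $g(1)=1$ satisfies $g(\bar e)\bar x=\bar x$, you may take $g_1$ arbitrarily close to $\id$ (with $g_0$ supported correspondingly close to $1$, still satisfying $g_1g_0=g_0$), so that $g_1(\bar e)\to\bar e$ in norm, and then conclude by closedness of $\sim_{ap}$. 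No spectral gap is needed. Conversely, for (i) you invoke Theorem~\ref{1} and Ciuperca--Elliott, while the paper gives a direct two-line argument: the hereditary subalgebra $B$ generated by $x^*x$ and $xx^*$ has stable rank $1$, $e$ acts as a unit on $B$, and unitaries in $B^\sim$ implementing $x^*x\sim_{ap}xx^*$ automatically fix $e$.

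The real gap is in (ii), in your $\mu<1$ case. With $g_1$ vanishing near $\mu$ and $g_1(1)=1$, the element $c=e-g_1(e)$ is indeed orthogonal to $x^*x$ and $xx^*$ (because $(e-g_1(e))x=x-x=0$), but it is \emph{not} in general orthogonal to $g_1(e)$: that would require $(\id-g_1)\cdot g_1=0$ on $\sigma(e)$, i.e.\ $g_1(t)\in\{0,t\}$ for every $t\in\sigma(e)$, which is impossible for a continuous $g_1$ once $\sigma(e)\supseteq[\mu,1]$. So your orthogonal-addition step fails. The fix is to drop the case split and do for every $\mu$ exactly what you already do for $\mu=1$: if $\lambda$ is the common scalar part of $x^*x+e$ and $xx^*+e$, then $x^*x+e-\lambda\cdot 1$ and $xx^*+e-\lambda\cdot 1$ are self-adjoint elements of $C_0(X)\otimes\mathcal K$ with identical eigenvalue functions, and Thomsen's theorem (which is stated for self-adjoint, not merely positive, elements) gives approximate unitary equivalence directly. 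This is precisely the paper's argument for (ii); it handles all pairs $(x,e)$ uniformly, without reducing to matrix algebras or splitting on $\mu$.
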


\begin{proof}
(i) Let $x,e\in A$ be as in Theorem \ref{1} (I). Let $B$ be the smallest hereditary subalgebra 
of $A$ containing $x^*x$ and $xx^*$. Then $B$ has stable rank 1, and $e$ is a unit for $B$. It is well known that in a 
C*-algebra of stable rank 1 Murray-von Neumann equivalent positive elements are approximately
unitarily equivalent in the unitization of the algebra. Therefore, there are unitaries $u_n\in B^\sim$, $n=1,2,\dots$,
such that $u_n^*x^*xu_n\to xx^*$. We also have $u_n^*eu_n=e$ for all $n$, since $e$ is a unit for $B$.
Hence $u_n^*(x^*x+e)u_n\to xx^*+e$, as desired.

(ii) Let $x,e\in (C_0(X)\otimes \mathcal K)^\sim$ be as in Theorem \ref{1} (I). For every $t\in X$
the operators $x^*(t)x(t)+e(t)$ and $x(t)x^*(t)+e(t)$, in $\mathcal K^\sim$, are approximately 
unitarily equivalent, since $\mathcal K^\sim$ has stable rank 1. Let us denote by $\lambda\in \R$ the scalar 
such that $x^*x+e-\lambda \cdot 1\in C_0(X)\otimes \mathcal K$ and $xx^*+e-\lambda \cdot 1\in C_0(X)\otimes \mathcal K$. Then the selfadjoint
elements $x^*x+e-\lambda \cdot 1$ and $xx^*+e-\lambda \cdot 1$ have the same eigenvalues for any
point $t\in X$, and so by Thomsen's \cite[Theorem 1.2]{thomsen} they are approximately unitarily equivalent in 
$C_0(X)\otimes \mathcal K$. (Thomsen's result is stated for selfadjoint elements of 
$C_0(X)\otimes M_n$, but it easily extends to selfadjoint elements of $C_0(X)\otimes \mathcal K$). 
It follows that $x^*x+e$ and $xx^*+e$ are approximately unitarily equivalent in 
$(C_0(X)\otimes \mathcal K)^\sim$. 

(iii) The property (I) passes to hereditary subalgebras because approximate Murray-von Neumann
equivalence does (by Corollary \ref{mvnher}). 

In order to consider quotients by closed two-sided ideals we first make the following claim:
for every $\epsilon>0$ there is $\delta>0$ such that if $\|x(1-e)\|<\delta$ and $\|(1-e)x\|<\delta$,
with $e$ a positive contraction, then $d_W(x^*x+e,xx^*+e)<\epsilon$. In order to prove this we 
notice that the inequality $d_W(x^*x+e,xx^*+e)<\epsilon$ is implied by a finite set of relations of Cuntz comparison
on positive elements obtained by functional calculus on $x^*x+e$ and $xx^*+e$ (see the proofs of 
Theorem \ref{existence} and Lemma \ref{WTlimit} (ii)). Using the continuity of the functional calculus,
the argument used  in the implication (II)$\Rightarrow$(I) of Theorem \ref{1} can still be carried out, approximately,
to obtain this finite set of Cuntz comparisons.

Let us suppose that the algebra $A$ has the property (I). Let $x,e\in A/I$ be elements in a quotient
of $A$ such that $ex=xe=x$, and $e$ is a positive contraction. Let $\tilde x$ and $\tilde e$ be lifts
of $x$ and $e$, with $\tilde e$ a positive contraction. Let $(i_\lambda)$ be an approximate identity of $I$.
Let $\tilde e_\lambda\in A$ be the positive contraction defined by 
$1-\tilde e_\lambda=(1-\tilde e)^{1/2}(1-i_\lambda)(1-\tilde e)^{1/2}$. Then $\tilde e_\lambda$ is a 
lift of $e$ for all $\lambda$, and $(1-\tilde e_\lambda)\tilde x, \tilde x(1-\tilde e_\lambda)\to 0$.
Thus, we can find lifts $\tilde x$ and $\tilde e_\lambda$ of $x$ and $e$, such that $\|(1-\tilde e_\lambda)\tilde x\|<\delta$ and 
$\|\tilde x(1-\tilde e_\lambda)\|<\delta$ for any given $\delta>0$. By the claim made in the previous paragraph we can choose
$\delta$ such that $d_W(\tilde x^*\tilde x+\tilde e,\tilde x^*\tilde x+\tilde e)<\epsilon$, for any given $\epsilon>0$. Since 
$A$ has the property (I), we have by Theorem \ref{1} that $d_U(\tilde x^*\tilde x+\tilde e,\tilde x^*\tilde x+\tilde e)<4\epsilon$.
Passing to the quotient by $I$ we get  $d_U(x^*x+e,x^*x+e)<4\epsilon$, and since $\epsilon$ is arbitrary we are done.

(iii) Let $x,e\in A$  be as in Theorem \ref{1} (I). We may approximate these elements by the images of 
elements $x',e'\in A_n$, with $e'$ a positive contraction, within an arbitrary degree of proximity. 
By possibly moving the elements $x'$ and $e'$ further along the inductive limit, we may assume that
$e'$ is approximately a unit for $x'$. We can then use the claim established in the proof of (ii), to get that $d_W((x')^*x'+e',x'(x')^*+e')$ can be made arbitrarily small (choosing $x'$ and $e'$ suitably). 
Since $A_n$ has the property (I), we have
that $d_U((x')^*x'+e',x'(x')^*+e')$ can be arbitrarily small. Going back to the limit algebra this implies that $d_U(x^*x+e,xx^*+e)$ is arbitrarily small, and so it is 0. 
\end{proof}

\begin{example}
Let $D$ denote the unit disc in $\R^2$ and $U$ its interior. Let $B\subseteq M_2(D)$ be 
the hereditary subalgebra 
\[\begin{pmatrix} C(D) & C_0(U)\\ C_0(U) &C_0(U)\end{pmatrix}.\]
By Propositions \ref{classI} (ii) and (iv), (I) holds in $B$. Thus, the Cuntz semigroup
functor classifies the homomorphisms from $C_0(0,1]$ to $B$ up to stable approximate unitary equivalence.
Let us show that, unlike the case of stable rank 1 algebras, stable approximate unitary equivalence
and approximate unitary equivalence do not agree in $B$.
Let $p\in B$ be the rank 1 projection $\begin{pmatrix} 1 & 0\\ 0 &0\end{pmatrix}$ and let $q\in B$ be a rank 1
projection that agrees with $p$ on the boundary of $D$, and such that the projection induced by $1-q$ in $D/\!\!\sim$, the disc
with the boundary points identified, is nontrivial. Then $p$ and $q$ are Murray-von Neumann equivalent projections,
and so they are stably unitary equivalent. However, if there were $u\in B^\sim$ unitary such that $u^*pu=q$, then the partial
isometry $v=u^*(1-p)$ would be constant on $\mathbb{T}$ and such that $v^*v=1-q$ and  $vv^*=1-p$ is trivial. This would
contradict the nontriviality of $1-q$ in $D/\!\!\sim$. 
\end{example}
 
Examples of C*-algebras that do not have the property (I) are not hard to come by. If a unital C*-algebra
$A$ has (I), then for any two projections $p$ and $q$ in $A$ such that $p\sim q$,
we have that $p+1\sim_{ap} q+1$ by (I). From this we deduce by functional calculus on $p+1$ and $q+1$
that $1-p\sim 1-q$. Thus, any unital C*-algebra where Murray-von Neumann equivalence of projections does
not imply that they are unitary equivalent does not have (I). In particular, the algebra $B^\sim$, with $B$ 
as in the previous example, does not have (I).  

\subsection{The isometry question}
The following question was posed to us  by Andrew Toms:
if $A$ has stable rank 1, is it true that $d_W=d_U$?
We formulate this question here for the algebras
covered by Theorem \ref{1}.

\emph{Question.} Suppose that $A$ has the property (I) of Theorem \ref{1}. Is it true that $d_W=d_U$?

We do not know the answer to this question, even in the case of stable rank 1 algebras.
Proposition \ref{isometry} below provides some evidence that the answer is yes.

\begin{lemma}\label{WTlimit} Let $A=\varinjlim (A_i, \phi_{i,j})$ be the C*-algebra inductive limit of the sequence of C*-algebras 
$(A_i)_{i=1}^\infty$ with connecting homomorphisms $\phi_{i,j}\colon A_i\to A_j$. Let $a,b\in A_k^+$ for some $k$. Then 

(i) $d_{U}^{A_i}(a_i,b_i)\to d_{U}^A(a_\infty,b_\infty)$ as $i\to \infty$, and

(ii) $d_{W}^{A_i}(a_i,b_i)\to d_{W}^A(a_\infty,b_\infty)$ as $i\to \infty$,

\noindent where $a_i$ and $b_i$ denote the images of $a$ and $b$ by the homomorphism $\phi_{k,i}$, 
for $i=k+1, k+2,\ldots, \infty$.
\end{lemma}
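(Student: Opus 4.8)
The plan is to prove both statements by establishing two inequalities for each: a lower semicontinuity statement showing that the limit value is at least the liminf of the $A_i$-values, and an upper semicontinuity statement showing it is at most the limsup. Since the connecting homomorphisms are norm-decreasing (indeed $\ast$-homomorphisms are contractive), both $d_U$ and $d_W$ are monotone nonincreasing along the system: applying $\phi_{i,j}$ can only identify more elements and can only make Cuntz subequivalences and norm distances smaller. So the sequences $d_U^{A_i}(a_i,b_i)$ and $d_W^{A_i}(a_i,b_i)$ are nonincreasing in $i$, hence convergent, and each dominates the corresponding value in $A$. The content of the lemma is therefore the reverse inequality: that the limit is not strictly smaller than the $A_i$-values eventually are.

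For part (i), I would argue as follows. Fix $r>d_U^A(a_\infty,b_\infty)$; then there is a unitary $u\in (A\otimes\mathcal K)^\sim$ with $\|u^*a_\infty u-b_\infty\|<r$. Since $A\otimes\mathcal K=\varinjlim A_i\otimes\mathcal K$ and unitaries in the unitization can be approximated (up to a small perturbation that can be corrected to an exact unitary, using that the unitary group is open and a small perturbation of a unitary is close to a unitary) by unitaries coming from some finite stage $(A_i\otimes\mathcal K)^\sim$, I can find, for large $i$, a unitary $u_i\in (A_i\otimes\mathcal K)^\sim$ whose image in $(A\otimes\mathcal K)^\sim$ is close to $u$. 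Conjugating $a_i$ by $u_i$ and pushing forward, the norm $\|u_i^*a_iu_i-b_i\|$ measured in $A_i$ can be made less than $r$ for all large $i$, because its image in $A$ is close to $\|u^*a_\infty u-b_\infty\|<r$ and the norm in $A$ of the image of any element of $A_i$ is the limit (decreasing) of the norms of its further images, so for large enough $i$ the $A_i$-norm is within the required tolerance. This gives $\limsup_i d_U^{A_i}(a_i,b_i)\leq r$, and since $r$ was arbitrary we conclude $\lim_i d_U^{A_i}(a_i,b_i)\leq d_U^A(a_\infty,b_\infty)$, which together with monotonicity yields equality.

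For part (ii), the strategy is to reduce the statement $d_W^A(a_\infty,b_\infty)<r$ to a finite set of Cuntz comparisons and then lift each one to a finite stage. Concretely, $d_W^A(a_\infty,b_\infty)<r$ unpacks into $e_{t+r}(a_\infty)\cuntzle_{Cu} e_t(b_\infty)$ and the symmetric statement for all $t$; by a compactness/uniformity argument (the same kind invoked in the proof of part (iii) of Proposition \ref{classI}, reducing $d_W<\epsilon$ to finitely many Cuntz comparisons on elements of $C^*(a)$ and $C^*(b)$), it suffices to verify finitely many comparisons of the form $(a_\infty-s)_+\cuntzle_{Cu}(b_\infty-t)_+$ with $s>t$. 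Each such comparison means, by the description recalled after Proposition \ref{rordam1}, that for suitable $\epsilon$ there is $d\in A\otimes\mathcal K$ with $(a_\infty-s)_+\sim d^*(b_\infty-t)_+d$ up to $\epsilon$; since $A\otimes\mathcal K$ is the inductive limit, this witness $d$ can be approximated by the image of some $d'\in A_i\otimes\mathcal K$, and functional calculus on $a,b$ is continuous along the maps $\phi_{k,i}$, so the corresponding comparison holds at stage $i$ up to a controlled error. Performing this for the finitely many required comparisons simultaneously (taking $i$ large enough for all of them) yields $d_W^{A_i}(a_i,b_i)<r$ for all large $i$, whence $\limsup_i d_W^{A_i}(a_i,b_i)\leq r$, and combined with monotonicity gives the desired convergence.

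The main obstacle I anticipate is in part (ii): making precise the reduction of the open condition $d_W<r$ to a \emph{finite} set of Cuntz comparisons that are individually stable under the approximations coming from the inductive limit. The subequivalence relation $\cuntzle_{Cu}$ is not itself an open condition, so one must insert cutdowns $(a-s)_+$ with a margin $s-t>0$ and use the continuity of functional calculus together with Proposition \ref{rordam1} to transfer an approximate Murray-von Neumann equivalence witnessed in $A$ down to a genuine Cuntz comparison at a finite stage; keeping the finitely many error terms uniformly controlled is the delicate bookkeeping step. Part (i) is comparatively routine, the only care being the approximation of a unitary in the unitization by unitaries from finite stages.
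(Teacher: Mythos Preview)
Your proposal is correct and follows essentially the same approach as the paper: establish monotonicity along the system, then lift witnesses (a unitary for (i), Cuntz-comparison data for (ii)) from $A$ back to a finite stage. The paper resolves the obstacle you flag in (ii) exactly as you anticipate, by choosing a finite grid $\{t_i\}$ of mesh $<\epsilon$ in $(0,1]$ and using \cite{kirchberg-rordam} to lift each of the finitely many resulting comparisons $e_{t_i+r+\epsilon}(a_n)\preccurlyeq_{Cu} e_{t_i}(b_n)$ to some common $A_n$.
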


\begin{proof}

(i)  We clearly have $d_{U}^{A_n}(a_n,b_n)\geq d_U^{A_{n+1}}(a_{n+1},b_{n+1})\geq d_U^A(a_\infty,b_\infty)$ 
for all $n\geq 1$. Therefore, it is enough to show that for every $\epsilon>0$ 
there is $n$ such that $d_U^{A_n}(a_n,b_n)\leq
d_U^A(a_\infty,b_\infty)+\epsilon$.

Let us denote $d_U^A(a_\infty,b_\infty)$ by $r$ and let $\epsilon>0$. Let $u\in (A\otimes \mathcal K)^\sim$ be a unitary
such that $\|ua_\infty u^*-b_\infty\|<\epsilon+r$. Since $A\otimes \mathcal K=\varinjlim A_i\otimes \mathcal K$,
there are $n$ and a unitary $u'\in (A_n\otimes\mathcal K)^\sim$ such that 
$\|u'a_n(u')^*-b_n\|<\epsilon+r$. Hence $d_U^{A_n}(a_n,b_n)\leq d_U^A(a_\infty,b_\infty)+\epsilon$.

(ii) We may assume without loss of generality that $k=1$. As before, we have $d_{W}^{A_n}(a_n,b_n)\geq d_W^{A_{n+1}}(a_{n+1},b_{n+1})\geq d_W^A(a_\infty ,b_\infty )$ for all $n\geq 1$. Thus,  we need to show that for every $\epsilon>0$ there is $n$ such that $d_W^{A_n}(a_n,b_n)\leq
d_W^A(a_\infty ,b_\infty)+\epsilon$.

Let us denote  $d_W(a_\infty,b_\infty)$ by $r$ and let $\epsilon>0$. Let us choose a grid of points $\{t_i\}_{i=1}^m$ in $(0,1]$ such that 
$t_i<t_{i+1}$ and $|t_i-t_{i+1}|<\epsilon$ for $i=1,\ldots,m-1$ (e.g., choose $m\geq 1/\varepsilon$ and $t_i=i/m$ for $i=1,\ldots,m$).
From the Cuntz inequality $e_{t_i+r+\epsilon/4}(a_\infty)\cuntzle_{Cu} e_{t_i}(b_\infty)$ and \cite[Lemma 2.2]{kirchberg-rordam}, we deduce that there exists $d_i\in A$ 
such that $e_{t_i+r+\epsilon/2}(a_\infty)=d_ie_t(b_\infty)d_i^*$. Since $A$ is the inductive limit of the C*-algebras $A_n$, we can find $n$ and 
$d_i'\in A_n$ such that
\[
\|e_{t_i+r+\epsilon/2}(a_n)-d_i'e_{t_i}(b_n)(d_i')^*\|<\epsilon/2.
\] 
By \cite[Lemma 2.2]{kirchberg-rordam} applied in the algebra $A_n$, we have that $e_{t_i+r+\epsilon}(a_n)\cuntzle_{Cu} e_{t_i}(b_n)$ in $A_n$.
Let us choose a value of $n$ such that this inequality holds in $A_n$ for all $i=1,2,\dots,m-1$, and such that we also have 
$e_{t_i+r+\epsilon}(b_n)\cuntzle_{Cu} e_{t_i}(a_n)$ for all $i=1,2,\dots,m-1$. 

Let $t\in [0,1]$. Let $i$ be the smallest integer such that $t\leq t_i$. Then $[t_i,t_i+r+\epsilon]\subseteq [t,t+r+2\epsilon]$. 
We have the following inequalities in $A_n$:
\[
e_{t+r+2\epsilon}(a_n)\cuntzle_{Cu} e_{t_i+r+\epsilon}(a_n)\cuntzle_{Cu}
e_{t_i}(b_n)\cuntzle_{Cu} e_t(b_n).
\]
The same inequalities hold after interchanging $a_n$ and $b_n$. Thus, $d_W^{A_n}(a_n,b_n)\leq r+2\epsilon$. 
\end{proof}

\begin{proposition}\label{isometry}
 Let $A$ be such that $A\otimes \mathcal K$ is an inductive limit of algebras of the form
$C(X_i)\otimes K$, with $\dim X_i\leq 2$, $\check H^2(X_i)=0$. Then the pseudometrics $d_U$ and $d_W$ 
agree on the positive elements of $A$.
\end{proposition}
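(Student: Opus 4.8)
The plan is to prove the two inequalities $d_W\le d_U$ and $d_U\le d_W$ separately, the first being immediate and the second carrying the whole content. Since $d_W\le d_U$ always holds (Lemma \ref{continuous}), it suffices to show $d_U\le d_W$. Write $B=A\otimes\mathcal K=\varinjlim B_i$ with $B_i=C(X_i)\otimes\mathcal K$; both pseudometrics on $A^+$ are computed in $B$, so it is enough to prove $d_U=d_W$ on $B^+$. First I would reduce to the building blocks: by Lemma \ref{WTlimit} both $d_U$ and $d_W$ are continuous along the inductive limit, so if $d_U=d_W$ holds in each $B_i$ then the equality passes to every pair of positive elements coming from some $B_k$; since such elements are norm-dense in $B^+$ and both pseudometrics are $1$-Lipschitz for the norm (Lemma \ref{continuous}), the equality extends to all of $B^+$. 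Within a single block $C(X)\otimes\mathcal K$ (with $X$ compact, $\dim X\le 2$, $\check H^2(X)=0$) the same norm-density argument lets me assume $a,b\in M_n(C(X))^+$, since these approximate any positive element of the stable algebra.

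For $a\in M_n(C(X))^+$ let $\lambda_1^a(x)\ge\cdots\ge\lambda_n^a(x)\ge 0$ be its eigenvalues at $x$, sorted and counted with multiplicity; these are continuous functions on $X$. Set
\[
\rho(a,b):=\sup_{x\in X}\,\max_{1\le k\le n}|\lambda_k^a(x)-\lambda_k^b(x)|.
\]
The strategy is to sandwich both pseudometrics between $\rho$: I will show $\rho\le d_W$ and $d_U\le\rho$, which together with $d_W\le d_U$ forces $\rho\le d_W\le d_U\le\rho$ and hence $d_U=d_W=\rho$. The inequality $\rho\le d_W$ is the easy direction. Fix $r>d_W(a,b)$, so that $e_{t+r}(a)\cuntzle_{Cu}e_t(b)$ and $e_{t+r}(b)\cuntzle_{Cu}e_t(a)$ for all $t$. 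Applying the evaluation homomorphism at $x$ (which preserves $\cuntzle_{Cu}$) and using that Cuntz comparison in $M_n$ is comparison of ranks, I get $\#\{k:\lambda_k^a(x)>t+r\}\le\#\{k:\lambda_k^b(x)>t\}$ for all $t,x$; letting $t\uparrow\lambda_m^a(x)-r$ yields $\lambda_m^a(x)-\lambda_m^b(x)\le r$, and the symmetric comparison gives $|\lambda_m^a(x)-\lambda_m^b(x)|\le r$ for all $m,x$. Hence $\rho(a,b)\le r$, and letting $r\downarrow d_W(a,b)$ gives $\rho\le d_W$.

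The main work is the inequality $d_U\le\rho$, which I would prove first for a dense class of $a$ and then pass to the limit. Call $a$ \emph{regular} if it has $n$ distinct strictly positive eigenvalues at every point of $X$. The set of Hermitian $n\times n$ matrices with a repeated eigenvalue has real codimension $3$ in $\mathrm{Herm}_n$, so since $\dim X\le 2$ a general-position perturbation of $a+\delta\cdot 1$ (all of whose eigenvalues are $\ge\delta$) produces a regular element arbitrarily close to $a$ in norm; thus the regular elements are norm-dense in $M_n(C(X))^+$. For a regular $a$ the $n$ one-dimensional eigenprojections vary continuously, defining complex line bundles over $X$; because $\check H^2(X)=0$ every line bundle over $X$ is trivial, so I can choose continuous orthonormal eigenvectors and assemble them into a unitary $U\in M_n(C(X))$ with $a=U D_a U^*$, where $D_a=\mathrm{diag}(\lambda_1^a,\dots,\lambda_n^a)$. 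Setting $b'=U D_b U^*$ with $D_b=\mathrm{diag}(\lambda_1^b,\dots,\lambda_n^b)$ gives a positive element with the same eigenvalue functions as $b$, so by Thomsen's theorem \cite[Theorem 1.2]{thomsen} $d_U(b',b)=0$, while $\|a-b'\|=\|D_a-D_b\|=\rho(a,b)$. The triangle inequality together with $d_U\le\|\cdot\|$ then gives $d_U(a,b)\le d_U(a,b')+d_U(b',b)\le\rho(a,b)$. Finally, for arbitrary $a$ I pick regular $\tilde a$ with $\|a-\tilde a\|<\eta$; since eigenvalues are $1$-Lipschitz in the element (Weyl), $\rho$ satisfies the triangle inequality and $\rho(\tilde a,b)<\rho(a,b)+\eta$, whence $d_U(a,b)\le\|a-\tilde a\|+d_U(\tilde a,b)<2\eta+\rho(a,b)$, and letting $\eta\to 0$ yields $d_U\le\rho$ in general.

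I expect the main obstacle to be the construction of the diagonalizing unitary $U$, that is, the combination of the general-position argument (to reduce to regular elements, using $\dim X\le 2$) with the triviality of the eigen–line bundles (using $\check H^2(X)=0$). These are exactly the two hypotheses that enter, and they are the quantitative counterpart of the classification supplied by Thomsen's theorem. Everything else—the reductions along the inductive limit, the passage to $M_n(C(X))$, and the rank computation giving $\rho\le d_W$—is routine given Lemmas \ref{continuous} and \ref{WTlimit}.
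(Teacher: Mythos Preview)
Your proof is correct and follows the same overall route as the paper: reduce via norm-density and Lemma~\ref{WTlimit} to $a,b\in M_n(C(X))$ for a single building-block space, and then identify both $d_U$ and $d_W$ with the sorted-eigenvalue distance. The only difference is in the final step. The paper observes (via the Marriage Lemma, which is equivalent to your pointwise rank computation) that $\max_k|\lambda_k^a(x)-\lambda_k^b(x)|\le d_W(a,b)$ and then invokes Thomsen's \cite[Theorem~1.2]{thomsen} directly in its quantitative form to conclude $d_U(a,b)\le d_W(a,b)$. You use Thomsen only in its qualitative form (equal eigenvalue functions $\Rightarrow$ $d_U=0$) and manufacture the quantitative bound yourself by diagonalizing a regular perturbation of $a$; this makes the role of the hypothesis $\check H^2(X)=0$ (triviality of the eigen-line bundles) more explicit, at the price of the Choi--Elliott density argument for regular elements, which the paper's proof does not need at this point.
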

\begin{proof}
We may assume without loss of generality that $A$ is stable.
Let $A=\varinjlim (C_0(X_i)\otimes \mathcal K,\phi_{i,i+1})$.
Since both $d_U$ and $d_W$ are continuous 
(by Lemma \ref{continuous}), it is enough to show that they are equal on a dense subset of $A^+$. Thus, we may assume that 
$a$ and $b$ belong to the image in $A$ of some algebra $C_0(X_i)\otimes \mathcal K$. Furthermore, in order to show that $d_U(a,b)=d_W(a,b)$, 
it is enough to show, by Lemma \ref{WTlimit},  that this equality holds on all the algebras 
$C_0(X_j)\otimes \mathcal K$, with $j\geq i$. Thus, we may assume that the algebra $A$ is itself of the 
form $C_0(X)\otimes \mathcal K$, with $\dim X\le 2$ and $\check H^2(X)=0$. Finally, since 
$\bigcup_{n=1}^\infty M_n(C_0(X))$ is dense in $C_0(X)\otimes \mathcal K$, we may assume that $a,b\in M_n(C_0(X))$
for some $n\in \N$.

So let $a,b\in M_n(C_0(X))$ be positive elements. Set $d_W(a,b)=r$. Then for every $x\in X$ we have $d_W(a(x),b(x))\leq r$, 
where $d_W$ is now taken in the C*-algebra $M_n(\C)$. From the definition of $d_W$ we see that this means that for every $t>0$, 
the number of eigenvalues of $a(x)$ that are less than $t$ is less than the number of eigenvalues of $b(x)$ that are less than $t+r$, 
and vice-versa, the number of eigenvalues of  $b(x)$ less than $t$, is less than the number of eigenvalues of $a(x)$ less than $t+r$. 
By the Marriage Lemma, this means that the eigenvalues of $a(x)$ and $b(x)$ may be matched in such a way that the distance between 
the paired eigenvalues is always less than $r$. By \cite[Theorem 1.2]{thomsen}, this implies that $d_U(a,b)<r$.
\end{proof}

\subsection{Counterexamples.}
The counterexamples of this subsection are C*-algebras that not only do not have the property (I),
but moreover the Cuntz semigroup functor does not distinguish the stable approximate unitary classes of homomorphisms 
from $C_0(0,1]$ to the algebra. 
\begin{example}\label{sphere}
Let $S^2$ denote the 2-dimensional sphere. Let us show that there are homomorphisms 
$\phi,\psi\colon C_0(0,1]\to M_2(C(S^2))$ 
such that $\Cu(\phi)=\Cu(\psi)$ but $\phi$ is not stably approximately unitarily equivalent to $\psi$.

Let $\lambda_1$ and $\lambda_2$ be  continuous functions from $S^2$ to $[0,1]$
such that $\lambda_1>\lambda_2$, $\min \lambda_2=0$, and $\min\lambda_1\leq \max \lambda_2$.
Let $P$ and $E$ be rank one projections in $M_2(C(S^2))$ such that $E$ is trivial and $P$ is non-trivial. 
Consider the positive elements
\[
a=\lambda_1 P+\lambda_2(1_2-P)\,\hbox{ and }\, b=\lambda_1 E+\lambda_2(1_2-E),
\] 
where $1_2$ denotes the unit of $M_2(C(S^2))$. Let us show that for every non-zero function 
$f\in C_0(0,1]$ we have $f(a)\sim f(b)$. In view of the computation of the Cuntz semigroup of $S^2$ 
obtained in \cite{robert}, it is enough to show that the rank functions of $f(a)$ and $f(b)$ are equal 
and non-constant. We have $f(a)=f(\lambda_1)P+f(\lambda_2)(1-P)$ and $f(b)=f(\lambda_1)E+f(\lambda_2)(1-E)$.
It is easily verified  that the rank functions of $f(a)$ and $f(b)$ are both equal to $\mathds{1}_U+\mathds{1}_V$, 
where $U=\{x\mid f(\lambda_1(x))\neq 0\}$, $V=\{x\mid f(\lambda_2(x))\neq 0\}$, and $\mathds{1}_U$ and $\mathds{1}_V$ 
denote the characteristic functions of $U$ and $V$. Since $\min \lambda_2=0$, the open set $V$ is a proper subset of $S^2$. So if $V$ is non-empty, then the function $\mathds{1}_U+\mathds{1}_V$ is non-constant. On the other hand, if $V$ is empty, then $f$ is 0 on the interval $[0,\max\lambda_2]$; in particular, $f(\min\lambda_1)=0$. Thus, $U$ is a proper subset of $S^2$ in this case, and so $\mathds{1}_U+\mathds{1}_V$ is again non-constant.

Let $\phi,\psi\colon C_0(0,1]\to M_2(C(S^2))$ be the homomorphisms such that
$\phi(\mathrm{id})=a$ and $\psi(\mathrm{id})=b$. It follows from the discussion in the previous paragraph 
that $\Cu(\phi)=\Cu(\psi)$. Let us show that $\phi$ and $\psi$ are not stably approximately
unitarily equivalent. 

Let $t=\max(\frac{\lambda_1}{\lambda_2})$ and $r>0$. Then
\[
e_t\left(\frac{a}{\lambda_1}\right)=(1-t)P\, \hbox{ and }\, 
e_{t+r}\left(\frac{b}{\lambda_1}\right)=(1-t-r)E.
\]
In order that $e_{t+r}(b/\lambda_1)$ be Cuntz smaller than $e_t(a/\lambda_1)$ the value
of $r$ must be at least $1-t$. Thus, $d_W(\frac{a}{\lambda_1},\frac{b}{\lambda_1})\geq 1-t$.
Hence $\frac{a}{\lambda_1}\nsim_{ap}\frac{b}{\lambda_1}$, and so $a\nsim_{ap}b$.
It follows that  $\phi$ and $\psi$ are not stably approximately unitarily equivalent.
\end{example}

Next we construct a simple AH C*-algebra for which the Cuntz semigroup functor does not
classify the homomorphisms from $C_0(0,1]$ into the algebra.

Let us recall the definition given in \cite{villadsen} of a diagonal homomorphism from $C(X)\otimes \mathcal K$ to $C(Y)\otimes \mathcal K$ (here $X$ and $Y$ are compact Hausdorff spaces). 
Let $(p_i)_{i=1}^n$ be mutually orthogonal projections in $C(Y)\otimes\mathcal  K$ and let $\lambda_i\colon Y\to X$, $i=1,2,\dots,n$, be continuous maps. Let us define a homomorphism $\phi\colon C(X)\to C(Y)\otimes\mathcal  K$ by
\[\phi(f)=\sum_{i=1}^n (f\circ\lambda_i)p_i.\] 
The homomorphism $\phi$ gives rise to a homomorphism $\tilde\phi$
from $C(X)\otimes\mathcal K$ to $C(Y)\otimes\mathcal K$ as follows: $\tilde\phi$ is the composition of 
$\phi\otimes \id\colon C(X)\otimes \mathcal K\to C(Y)\otimes\mathcal  K\otimes\mathcal K$ with 
$\id\otimes \alpha\colon C(Y)\otimes\mathcal K\otimes \mathcal K\to C(Y)\otimes\mathcal K$, where $\alpha$ is  some isomorphism map from 
$\mathcal K\otimes \mathcal K$ to $\mathcal K$. A homomorphism $\tilde\phi$ obtained
in this way is said to be a diagonal homomorphism arising from the data $(p_i,\lambda_i)_{i=1}^n$
(the choice of $\alpha$ does not change the approximate unitary equivalence class of $\tilde\phi$).

\begin{theorem}\label{example}
There exists a simple stable AH C*-algebra $A$, and homomorphisms $\phi,\psi\colon C_0(0,1]\to A$, such that $\Cu(\phi)=\Cu(\psi)$
but $\phi$ and $\psi$ are not approximately unitarily equivalent.
\end{theorem}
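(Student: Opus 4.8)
The plan is to construct $A$ as an inductive limit $\varinjlim (C(X_i)\otimes \mathcal K, \tilde\phi_{i,i+1})$ of the Villadsen type, with the $X_i$ being iterated products of spheres and the connecting maps being diagonal homomorphisms arising from suitable data $(p_j,\lambda_j)$. The key design principle, borrowed from Villadsen's construction in \cite{villadsen}, is to arrange the point-evaluation maps $\lambda_j$ so that a growing proportion of them are coordinate projections onto $S^2$ factors (producing nontrivial vector bundles and hence obstructions of the kind exploited in Example \ref{sphere}), while interspersing enough constant-point evaluations to force simplicity of the limit. Concretely, I would take $X_{i+1}=X_i^{k_i}\times (S^2)^{m_i}$ and define $\tilde\phi_{i,i+1}$ using the $k_i$ coordinate projections $X_{i+1}\to X_i$ together with a large number of evaluations at a fixed point of $X_i$; the ratio of ``genuine'' coordinate maps to constant maps must tend to zero to guarantee simplicity, yet the absolute number of sphere factors must grow fast enough to preserve the cohomological obstruction.

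The next step is to transport the pair $(a,b)$ of Example \ref{sphere} into the limit. Fix an $S^2$ factor appearing at some stage $X_{i_0}$, and on $M_2(C(S^2))\subseteq C(X_{i_0})\otimes\mathcal K$ place the two positive elements $a=\lambda_1 P+\lambda_2(1-P)$ and $b=\lambda_1 E+\lambda_2(1-E)$ from that example, with $P$ nontrivial and $E$ trivial. Let $\phi,\psi\colon C_0(0,1]\to A$ be the homomorphisms determined by the images of $a$ and $b$ in the limit. Since the rank functions of $f(a)$ and $f(b)$ agree for every $f\in C_0(0,1]$ (as computed in Example \ref{sphere}), and since the Cuntz semigroup of the limit is the inductive limit of the Cuntz semigroups of the $C(X_i)\otimes\mathcal K$, the equality $\Cu(\phi)=\Cu(\psi)$ should persist after applying the connecting maps: the diagonal homomorphisms act on rank functions by a transparent pullback-and-sum formula, so equal rank functions are carried to equal rank functions at every finite stage.

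The main obstacle, and the crux of the construction, is to show that $\phi$ and $\psi$ remain \emph{not} approximately unitarily equivalent in the limit. Here I would track the $d_W$-style obstruction of Example \ref{sphere} through the connecting maps. The quantity that detected non-equivalence there was essentially the failure of a rank-one piece of $e_t(a/\lambda_1)$ to be Cuntz-dominated by the corresponding piece of $e_t(b/\lambda_1)$, which in turn came from the nontriviality of the line bundle underlying $P$ versus $E$. Under a diagonal homomorphism $\tilde\phi_{i,i+1}$, the image of the projection $P$ pulls back along the coordinate maps $\lambda_j$, and its topological nontriviality (measured by a suitable characteristic class, e.g.\ an Euler or Chern class of the associated bundle over $(S^2)^{m_i}$) is preserved and in fact amplified by the multiplicity of coordinate evaluations. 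The delicate point is a bookkeeping estimate: one must verify that the obstruction survives uniformly, i.e.\ that $d_U(\phi_n(\mathrm{id}),\psi_n(\mathrm{id}))$ stays bounded below by a fixed positive constant as $n\to\infty$, despite the dilution forced by the constant-point evaluations needed for simplicity. This requires balancing the two competing demands in the choice of the sequences $(k_i,m_i)$—fast enough bundle growth to keep the characteristic class nonzero, slow enough relative multiplicity to retain simplicity—and it is precisely this balance, together with the stability of the characteristic-class obstruction under pullback, that I expect to be the technical heart of the argument.
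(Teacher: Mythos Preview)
Your overall strategy matches the paper's: build a Villadsen-type limit, plant the pair $(a,b)$ from Example~\ref{sphere} at the first stage, push it forward, and check that the Cuntz invariants agree while a characteristic-class obstruction keeps $d_U$ bounded below along the system (so by Lemma~\ref{WTlimit} the limit elements are not approximately unitarily equivalent). The paper's construction differs in details---it takes $X_1=\mathrm{CP}(1)\cong S^2$, $X_{i+1}=X_i\times\mathrm{CP}(n_i)$ with $n_i=2\cdot(i+1)!$, and uses a \emph{single} projection $\pi_i\colon X_{i+1}\to X_i$ together with $i$ point evaluations carrying the tautological line bundle $\eta_{n_i}$---but this is a variant of the same Villadsen template you describe. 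The ``balance'' you anticipate is exactly what the rapid growth of $n_i$ buys: the number of copies of each $\tilde\eta_j$ that accumulate under iteration never exceeds $n_j$, so the relevant total Chern class stays nonzero and one gets the clean inequality $[1\otimes\cdots\otimes 1]\not\leq [\tilde\eta_1]+\sum_j 2k_j[\tilde\eta_j]$, which yields $d_W(a_i/\lambda_1,b_i/\lambda_1)\geq 1-\max(\lambda_2/\lambda_1)$ uniformly in $i$.

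There is, however, a genuine gap in your argument for $\Cu(\phi)=\Cu(\psi)$. You write that equal rank functions are carried to equal rank functions under the diagonal maps, and infer Cuntz equality at every finite stage. The first claim is true but the inference fails: once $i\geq 2$ the spaces $X_i$ have dimension far above $2$, and on such spaces equal rank functions do \emph{not} imply Cuntz equivalence (indeed, this is precisely the mechanism behind the obstruction you exploit on the other side of the argument). The correct---and much shorter---justification is pure functoriality: Example~\ref{sphere} gives $[f(a)]=[f(b)]$ in $\Cu(C(X_1)\otimes\mathcal K)$ for every $f\in C_0(0,1]$, hence $\Cu(\phi_a)=\Cu(\psi_b)$ as morphisms, and applying $\Cu(\tilde\phi_{1,i})$ yields $\Cu(\phi_{a_i})=\Cu(\psi_{b_i})$ at every stage, so also in the limit. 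No rank-function bookkeeping is needed (or valid) beyond stage~$1$.
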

\begin{proof}
Let us define the sequence of topological spaces $(X_i)_{i=1}^\infty$ by $X_1=\mathrm{CP}(1)$ and $X_{i+1}=X_i\times \mathrm{CP}(n_i)$, 
where $n_i=2\cdot(i+1)!$ and $\mathrm{CP}(n)$ denotes the complex projective space of dimension $2n$.
For every $n$ let us denote by $\eta_{n}$ the rank one projection in $C(\mathrm{CP}(n))\otimes \mathcal K$ 
associated to the canonical line bundle of $\mathrm{CP}(n)$. For every $i$ let $\pi_i\colon X_{i+1}\to X_i$ denote the projection map onto $X_i$.
Let $\tilde\phi_i\colon C(X_i)\otimes K\to C(X_{i+1}\otimes \mathcal K)$ denote the diagonal homomorphism
given by the data $(1,\pi_i)\cup (\eta_{n_i}^j,\delta_{y_i^j})_{j=1}^i$, where $(\eta_{n_i}^j)_{j=1}^i$ are mutually orthogonal projections
all Murray-von Neumann equivalent to $\eta_{n_i}$, and $\delta_{y_i^j}\colon X_{i+1}\to X_i$ is the constant map equal to $y_i^j\in X_i$ 
for $j=1,2,\dots,i$. It is possible, and well known, to choose the points $y_i^j$ in such a way that the inductive limit 
$A=\varinjlim (C(X_i)\otimes K,\phi_i)$ is a simple C*-algebra (see \cite{villadsen}). Let us show that this inductive limit $A$ provides 
us with the desired example. 

Let $a,b\in C(X_1)\otimes \mathcal K$ be the two positive elements constructed in the proof of Theorem \ref{sphere} (notice that $X_1$ 
is homeomorphic to $S^2$). Set $\phi_{1,i}(a)=a_i$ and $\phi_{1,i}(b)=b_i$ for $i=2,3,\dots,\infty$. For $i=2,\dots,\infty$, let us denote 
by $\phi_{a_i}$ and $\psi_{b_i}$ the homomorphisms from $C_0(0,1]$ to $A_i$ associated to the positive elements $a_i$ and $b_i$. Since
$\Cu(\phi_{a_i})=\Cu(\psi_{b_i})$, we have $\Cu(\phi_{a_\infty})=\Cu(\psi_{b_\infty})$. Let us show, on the other hand, that the homomorphisms 
$\phi_{a_\infty}$ and $\psi_{b_\infty}$ are not approximately unitarily equivalent. Equivalently, let us show that $d_U(a_\infty,b_\infty)>0$. 
By Lemma \ref{WTlimit}, it suffices to show that $d_U(a_n,b_n)$ does not tend to 0. Let us show that $d_U(a_n,b_n)\geq (\min \lambda_1)(1-\max(\lambda_2/\lambda_1))$ for all $n$, where $\lambda_1$ and $\lambda_2$ are the functions used in the definition of $a$ and $b$ in Theorem \ref{sphere}.

Let us denote by $\tilde\eta_i\in C(X_i)\otimes \mathcal K$ the projection $e_0\otimes 1\otimes\dots \otimes \eta_i\otimes \dots\otimes 1$,
where  $\eta_i$ is placed in the $i$-th position of the tensor product. Here we view $ C(X_i)\otimes \mathcal K$ as the tensor product 
\[
(C(\mathrm{CP}(1))\otimes \mathcal K)\otimes C(\mathrm{CP}(n_2))\otimes\dots \otimes C(\mathrm{CP}(n_i)). 
\]
Let $p$ be an arbitrary  projection in 
$C(X_1)\otimes \mathcal K$. It was observed in \cite{villadsen} that the image of $p$ by $\phi_{1,i}$ is Murray-von Neumann equivalent 
to the projection
\begin{align*}
(p\otimes 1\otimes \dots \otimes 1)\oplus k_1\tilde\eta_1 \oplus k_2\tilde\eta_2\oplus \dots\oplus k_i\tilde\eta_i,
\end{align*}
where $k_i\in \N$. In this expression the multiplication by the coefficients $k_i$ indicates the orthogonal sum of $k_i$ copies of the
projection $\tilde\eta_i$. In a similar manner, one can show that for every scalar function $\lambda\in C(X_1)$ 
the image of $\lambda p$ by $\phi_{1,i}$ is Murray-von Neumann equivalent to
\begin{align*}
\lambda (p\otimes 1\otimes \dots \otimes 1)\oplus \bigoplus_j \lambda(y_1^j)\tilde\eta_1 \oplus 
\bigoplus_j \lambda(y_2^j)\tilde\eta_2\oplus\dots\oplus \bigoplus_j \lambda(y_i^j)\tilde\eta_i.
\end{align*}
Since $a$ and $b$ have both the form $\lambda_1 p\oplus \lambda_2 q$, for some projections $p$
and $q$, and scalar functions $\lambda_1$ and $\lambda_2$, the formula above allows us to compute 
the images of $a$ and $b$ in $C(X_i)\otimes \mathcal K$, i.e., the elements $a_i$ and $b_i$, up 
to Murray-von Neumann equivalence. Thus, $a_i$ is Murray-von Neumann equivalent to
\begin{align*}
&\lambda_1 \tilde\eta_1\oplus \bigoplus_j \lambda_1(y_1^j)\tilde\eta_1 \oplus 
\bigoplus_j \lambda_1(y_2^j)\tilde\eta_2\oplus\dots\oplus \bigoplus_j \lambda_1(y_i^j)\tilde\eta_i\oplus\\
&\lambda_2 \tilde\eta_1'\oplus \bigoplus_j \lambda_2(y_1^j)\tilde\eta_1 \oplus 
\bigoplus_j \lambda_2(y_2^j)\tilde\eta_2\oplus\dots\oplus \bigoplus_j \lambda_2(y_i^j)\tilde\eta_i,
\end{align*}
where $\tilde\eta_1'=(1_2-\eta_1)\otimes 1\otimes\dots\otimes 1$. A similar expression holds for $b_i$.

Let $a'_i=a_i/\lambda_1$ and $b_i'=b_i/\lambda_1$. Let $t=\max(\lambda_2/\lambda_1)$. 
Let us show that $d_W(a_i',b_i')\geq 1-t$. We have that $(a_i'-t)_+$ is Murray-von Neumann
equivalent to
\begin{align}\label{bigoplus}
(1-t)\tilde\eta_1\oplus&\bigoplus_j \alpha_{1,j}(y)\tilde\eta_1 \oplus 
\bigoplus_j \alpha_{2,j}(y)\tilde\eta_2\oplus\dots \oplus\bigoplus_j \alpha_{i,j}(y)\tilde\eta_i\oplus\\
&\bigoplus_j \beta_{1,j}(y)\tilde\eta_1 \oplus\bigoplus_j \beta_{2,j}(y)\tilde\eta_2\oplus\dots
\oplus\bigoplus_j \beta_{i,j}(y)\tilde\eta_i,\nonumber
\end{align}
where $\alpha_{k,j}(y)=(\frac{\lambda_1(y_k^j)}{\lambda_1(y)}-t)_+$ and $\beta_{k,j}(y)=(\frac{\lambda_2(y_k^j)}{\lambda_1(y)}-t)_+$
for $k,j=1,\dots,i$. It follows that 
\[
[(a_i'-t)_+]\leq [\tilde\eta_1]+\sum_{j=2}^i 2k_j[\tilde\eta_j]
\]
in the Cuntz semigroup of $C(X_i)\otimes \mathcal K$. For the element $(b_i'-t)_+$ an expression 
identical to \eqref{bigoplus} may be found, except that the first summand of \eqref{bigoplus} is replaced with the 
term $(1-t)(1\otimes \dots\otimes 1)$. It follows that for all $r<1-t$ we have $[1\otimes\dots \otimes 1]\leq [(b_i'-t-r)_+]$. 
Since we do not have $[1\otimes\dots \otimes 1]\leq [\tilde\eta_1]+\sum_{j=2}^i 2k_j[\tilde\eta_j]$ (because the total Chern class of
the projection on the right side is nonzero), we conclude that $d_W(a_i',b_i')\geq 1-t$. By Lemma \ref{continuous} we have $d_U(a_i',b_i')\geq 1-t$. Hence 
\[
d_U(a_i,b_i)\geq (\min \lambda_1)\cdot d_U(a_i',b_i')\geq (\min \lambda_1)\cdot (1-\max(\lambda_2/\lambda_1)).\qedhere
\] 
\end{proof}

\section{Classification by the functor $\Cu(\cdot\otimes \mathrm{Id})$}
Let $A$ and $B$ be C*-algebras. For $a\in (A\otimes \mathcal K)^+$ a contraction, let us denote by $d_W^a$
the pseudometric on the Cuntz category morphisms from $\Cu(A)$ to $\Cu(B)$ given by
\[
d_W^a(\alpha,\beta):= d_W(\alpha\circ\Cu(\phi_a),\beta\circ\Cu(\phi_a)),
\] 
where $\phi_a\colon C_0(0,1]\to A\otimes \mathcal K$ is such that $\phi(\mathrm{id})=a$. We consider
the set $\mathrm{Mor}(\Cu(A),\Cu(B))$ endowed with the uniform structure induced by all the pseudometrics $d_W^a$.
A basis of entourages for this uniform structure is given by the sets
\[
U_{F,\epsilon}=\{(\alpha,\beta)\mid d_W^a(\alpha,\beta)<\epsilon,a\in F\},
\]
where $\epsilon>0$ and $F$ runs through the finite subsets of positive contractions of $A\otimes \mathcal K$.

We will prove the following theorem, of which Theorem \ref{extension} 
of the introduction is an obvious corollary.

\begin{theorem}\label{extension2}
For every $\epsilon>0$ there is a finite set $F\subset C_0(0,1]\otimes C_0(0,1]$, and $\delta>0$,
such that
\[
(\Cu(\phi\otimes \mathrm{Id}),\Cu(\psi\otimes \mathrm{Id}))\in U_{F,\delta}\Rightarrow 
d_U(\phi(\id),\psi(\id))<\epsilon,
\]
for any pair of homomorphisms $\phi,\psi\colon C_0(0,1]\to A$, where the C*-algebra $A$ is an inductive limit of the form $\varinjlim C(X_i)\otimes\mathcal K$, with $X_i$ compact metric spaces and $\dim X_i\leq 2$ for all $i=1,2\dots$.
\end{theorem}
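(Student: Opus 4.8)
The plan is to unwind the hypothesis into a statement about positive elements and then reduce to the building blocks $C(X)\otimes\mathcal K$ with $\dim X\le 2$. If $a=\phi(\id)$ and $g\in C_0(0,1]\otimes C_0(0,1]$, then the composite $(\phi\otimes\mathrm{Id})\circ\phi_g$ (where $\phi_g(\id)=g$) sends $\id$ to $(\phi\otimes\mathrm{Id})(g)\in A\otimes C_0(0,1]$, whose value at a point $(x,t)$ (when $A=M_n(C(X))$) is $\sum_i g(\mu_i(x),t)P_i(x)$, with $\mu_i(x)$ and $P_i(x)$ the eigenvalues and spectral projections of $a(x)$. Hence $(\Cu(\phi\otimes\mathrm{Id}),\Cu(\psi\otimes\mathrm{Id}))\in U_{F,\delta}$ says exactly that $d_W((\phi\otimes\mathrm{Id})(g),(\psi\otimes\mathrm{Id})(g))<\delta$ in $A\otimes C_0(0,1]$ for every $g\in F$. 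Using the continuity of $d_U$ and $d_W$ along inductive limits (Lemma \ref{WTlimit}) together with density, I would reduce the whole implication to the case $A=M_n(C(X))$, $X$ compact metric with $\dim X\le 2$, with $F$ and $\delta$ chosen independently of $n$ and $X$; the uniform constants then propagate back to the inductive limit.

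On such a building block, $d_U(a,b)$ is governed by two invariants: the eigenvalue functions $x\mapsto(\mu_1(x)\ge\cdots\ge\mu_n(x))$, and the classes in $K_0(X)$ of the spectral projections of $a$ over the regions where consecutive eigenvalues stay separated. The first invariant is already seen without the suspension: choosing $g\in F$ independent of the auxiliary variable recovers $d_W(a,b)$, and the Marriage Lemma argument of Proposition \ref{isometry} then matches the eigenvalue functions of $a$ and $b$ to within the error. The role of the suspension is to expose the second invariant. Here I would take $g(s,t)=t\,h(s)$ with $h\in C_0(0,1]$ a cut-off equal to $1$ above a threshold $s_0$ and $0$ below $s_0-\eta$; over the clopen subset of $X$ on which $a$ has a spectral gap at $s_0$, the cut-down $\big((\phi\otimes\mathrm{Id})(g)-\tau\big)_+$ is Cuntz equivalent to the spectral projection $P_{>s_0}(a)$ tensored with a characteristic function of $t$, and its Cuntz class records $[P_{>s_0}(a)]\in K_0(X)$. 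Consequently $d_W((\phi\otimes\mathrm{Id})(g),(\psi\otimes\mathrm{Id})(g))<\delta$ forces the separated spectral projections of $a$ and $b$ to agree in $K_0$ at the thresholds probed by $F$.

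Once both invariants are matched I would conclude $d_U(a,b)<\epsilon$ from a refinement of Thomsen's theorem \cite[Theorem 1.2]{thomsen}: over a space of dimension at most $2$, a positive element is determined up to approximate unitary equivalence by its eigenvalue functions together with the Chern classes of its separated spectral projections, the latter data being vacuous exactly when $\check H^2(X)=0$, where one recovers Thomsen's statement. Because complex vector bundles over a $2$-dimensional complex are classified by rank and $c_1$, and because the eigenvalue functions are a priori controlled once $d_W(a,b)$ is small, only finitely many thresholds $s_0$ and cut-offs $h$ are needed to pin down all the relevant projection classes; this is what permits $F$ to be finite and $\delta$ uniform in $n$ and $X$. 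Alternatively, the finiteness of $F$ and the value of $\delta$ could be extracted from the corresponding qualitative statement (Theorem \ref{extension}) by a compactness argument.

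The hard part will be the passage from the suspended data to the bundle invariants and from there to $d_U$. Two difficulties stand out. First, to make $\big((\phi\otimes\mathrm{Id})(g)-\tau\big)_+$ genuinely Cuntz equivalent to an honest spectral projection, one needs the eigenvalues to stay boundedly away from the threshold; since $a$ and $b$ are only approximately eigenvalue-matched, the errors must be tracked, the argument confined to the regions where a definite gap persists, and those regions shown to retract onto $X$ so that the projections are pulled back from $X$. Second, and more fundamentally, the refined, Chern-class-sensitive classification over $2$-dimensional spaces---with a genuine quantitative bound on $d_U$ and with $F,\delta$ uniform in $n$ and $X$---is the technical core; I expect it to require an obstruction-theoretic argument extending Thomsen's, since the unmodified \cite[Theorem 1.2]{thomsen} only covers the cohomologically trivial case.
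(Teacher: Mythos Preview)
Your overall architecture is the paper's: reduce via Lemma \ref{WTlimit} and density to $M_n(C(X))$ with $X$ a finite CW-complex of dimension at most $2$; write $a=\sum P_j\lambda_j$, $b=\sum Q_j\mu_j$ (using Choi--Elliott density to get distinct eigenvalues); match the eigenvalue functions using an unsuspended test element; use the suspended test elements to force Murray--von Neumann equivalence of the partial sums $\sum_{j\le i}P_j$ and $\sum_{j\le i}Q_j$ on the gap sets; and conclude by the Chern-class-sensitive refinement of Thomsen you anticipate, which is precisely Proposition \ref{CWcompare}.

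The gap is in your choice of test functions. You propose $g(s,t)=t\,h(s)$ with $h$ a cut-off at a fixed threshold $s_0$. For this to isolate $\sum_{j\le i}P_j$ you need $s_0$ to sit in the gap $(\lambda_{i+1}(x),\lambda_i(x))$, but that gap moves with $x$; a fixed $s_0$ only works on the open subset $\{x:\lambda_{i+1}(x)<s_0<\lambda_i(x)\}$ of the gap set, and outside it $h(a)$ is not a projection at all. Finitely many thresholds cover the gap set by such open pieces, but you then owe a patching argument to upgrade local equivalence of projections to equivalence on the whole closed gap set, and you do not supply one. (Incidentally, these sets are open, not clopen, and the spectral projections already live over $X$, so there is nothing to retract.) The paper sidesteps all of this with a single function: it takes $F=\{\id\otimes\id,\ \id\otimes g_\epsilon\}$ where $g_\epsilon(t)=\epsilon/t$ on $[\epsilon,1]$. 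The eigenvalues of $a\otimes g_\epsilon$ at $(x,t)$ are $\lambda_j(x)g_\epsilon(t)$, so restricting the global Cuntz inequality to the graph $\{(x,\lambda_i(x)):x\in T_i\}$ normalizes the $i$-th eigenvalue to exactly $\epsilon$ and drops the $(i{+}1)$-th below the cut, yielding $\sum_{j\le i}P_j\preccurlyeq_{Cu}\sum_{j\le i}Q_j$ on the entire set $T_i$ in one stroke. This ``scanning'' trick---one reciprocal function in the suspension variable, then restriction to level curves indexed by the eigenvalues themselves---is the idea your outline is missing, and it is exactly what makes $F$ finite and $\delta$ uniform in $n$ and $X$ without any compactness bootstrap.
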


Before proving Theorem \ref{extension2} we need some preliminary definitions and results.
We will consider the relation of Murray-von Neumann equivalence on projections
in matrix algebras over possibly non-compact spaces. If $P$ and $Q$ are 
projections in the algebra $M_n(C_b(X))$ of continuous, bounded, matrix valued functions
on $X$, we say that $P$ and $Q$ are Murray-von Neumann
equivalent, and denote this by $P\sim Q$, if there is $v\in M_n(C_b(X))$
such that $P=vv^*$ and $Q=vv^*$. For a subset $U$ of $X$, assumed either open or closed, 
we say that $P$ is Murray-von Neumann equivalent to $Q$ on the set $U$ if the restrictions of
$P$ and $Q$ to $U$ are Murray-von Neumann equivalent in the algebra $M_n(C_b(U))$.

\begin{lemma}\label{pointsCW}
 Let $X$ be a finite CW-complex of dimension at most 2, and let $C$ be a closed subset of $X$. 
If $P$ and $Q$ are projections in $M_n(C(X))$ such that $P$ is Murray-von Neumann equivalent to $Q$
on the set $C$, then there exists a finite subset $F$ of $X\backslash C$ such that
$P$ is Murray-von Neumann equivalent to $Q$ on $X\backslash F$.
\end{lemma}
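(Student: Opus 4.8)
The plan is to translate the statement into the language of vector bundles and then to extend an isomorphism cell by cell over the CW structure, deleting one interior point from each $2$-cell on which an obstruction appears. Recall that a Murray--von Neumann equivalence $P\sim Q$ over a space is the same datum as an isomorphism of the associated range bundles $E_P=\mathrm{Im}\,P$ and $E_Q=\mathrm{Im}\,Q$; such an isomorphism is precisely a section of the fibre bundle $\mathrm{Iso}(E_P,E_Q)$ over that space, whose fibre is $\mathrm{GL}_k(\C)\simeq U(k)$, where $k$ is the common rank of $P$ and $Q$. Thus I must show that a section defined over $C$ extends to a section over $X\setminus F$ for some finite $F\subseteq X\setminus C$.

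First I would replace $C$ by a subcomplex. Lifting a partial isometry implementing $P\sim Q$ on $C$ to an element of $M_n(C(X))$ by Tietze's extension theorem and correcting it by functional calculus (in the spirit of Proposition \ref{rordam1}), one obtains $P\sim Q$ on an open neighbourhood $V$ of $C$. After subdividing the CW structure finely enough that every closed cell meeting $C$ lies inside $V$, the union $Y$ of all closed cells meeting $C$ is a subcomplex with $C\subseteq Y\subseteq V$ and $P\sim Q$ on $Y$. This lets me work with the genuine CW pair $(X,Y)$ and apply obstruction-theoretic extension. I would then extend the isomorphism over the $1$-skeleton: since every cell is contractible, both bundles are trivial over it and a section of $\mathrm{Iso}(E_P,E_Q)$ is just a map into the fibre $\mathrm{GL}_k(\C)$; as this group is path connected, the section given on the two endpoints of each $1$-cell extends across the cell, so it extends with no deletions over all of $X^{(1)}\cup Y$.

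The main point is the passage over the $2$-cells, and this is where $\dim X\le 2$ is used. For a $2$-cell $e\cong D^2$ not contained in $Y$, the section is already defined on $\partial e=S^1$; in a trivialization it is a map $S^1\to\mathrm{GL}_k(\C)$ whose class in $\pi_1(\mathrm{GL}_k(\C))\cong\Z$, measured by the winding number of its determinant, is the sole obstruction to extending it over all of $D^2$. When this class vanishes I extend over the whole cell; when it does not, I delete one interior point $x_e$, and since $D^2\setminus\{x_e\}$ deformation retracts onto $\partial e$, the boundary section pulls back to a section over the punctured cell. Setting $F=\{x_e\}$ over the finitely many obstructed $2$-cells gives a finite set, disjoint from $C$ since it lies in $X\setminus Y$, and produces a section of $\mathrm{Iso}(E_P,E_Q)$, equivalently a Murray--von Neumann equivalence $P\sim Q$, over all of $X\setminus F$.

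I expect the reduction to a subcomplex to be the main technical obstacle: one must verify that the functional-calculus correction genuinely yields a continuous partial isometry over a neighbourhood of $C$, and that the subdivision can be arranged so that $Y$ is a subcomplex lying inside $V$. The role of the dimension hypothesis should also be flagged: $\dim X\le 2$ makes the $2$-cells top-dimensional, so that deleting interior points from them does not disturb the boundary data of any higher cell. In dimension $\ge 3$ the deleted points could lie on the attaching spheres of $3$-cells, the section would fail to be defined on all of their boundaries, and this point-deletion argument would break down.
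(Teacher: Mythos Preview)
Your argument is correct, but it takes a genuinely different route from the paper's.

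The paper does \emph{not} attempt to extend the given equivalence on $C$ at all, and so bypasses both your neighbourhood-and-subdivision step and the obstruction computation over $2$-cells. Instead it proceeds as follows. Let $(\Delta_i)_{i=1}^m$ be the $2$-cells of $X$, with $\Delta_1,\dots,\Delta_{m_0}$ the ones meeting $X\setminus C$. The paper picks one interior point $x_i\in\mathring\Delta_i\setminus C$ from \emph{each} such cell (not only the obstructed ones) and sets $F=\{x_1,\dots,x_{m_0}\}$. Then $X\setminus F$ deformation retracts onto $X_1\cup\bigcup_{i>m_0}\Delta_i$ (the $1$-skeleton together with the $2$-cells entirely contained in $C$), so by homotopy invariance of Murray--von Neumann equivalence it suffices to build an equivalence on this retract. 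On $\bigcup_{i>m_0}\Delta_i\subseteq C$ the hypothesis supplies a partial isometry $v$; the paper then extends $v$ from the part of the $1$-skeleton lying in these cells to all of $X_1$, quoting Phillips' extension result for $1$-dimensional spaces. No $\pi_1$ obstruction is ever confronted, because every $2$-cell outside $C$ has already been punctured.

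What each approach buys: the paper's argument is shorter and avoids the technical reduction to a subcomplex (no Tietze lift, no functional-calculus correction, no subdivision), at the price of deleting more points than necessary and not preserving the original equivalence on $C$. Your approach is more refined: it yields a smaller $F$ (only the genuinely obstructed cells contribute) and produces an equivalence on $X\setminus F$ that actually restricts to the given one on $C$, which is a stronger conclusion than the lemma requires. Your flagged concern about the subdivision step is legitimate but manageable for finite $2$-dimensional CW-complexes, which can be triangulated; the paper simply sidesteps the issue.
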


\begin{proof}
Let $X_1$ denote the 1-skeleton of $X$ and $(\Delta_i)_{i=1}^m$ the 2-cells of $X$. Suppose that 
$(\Delta_i)_{i=1}^{m_0}$ are the 2-cells intersected by the open set $X\backslash C$. Choose 
points $x_i\in \mathring{\Delta}_i\backslash C$ for $i\leq m_0$, and let $F$ be the set of these points. Since $X\backslash F$
contracts to $X_1\cup \bigcup_{i>m_0} \Delta_i$, it is enough to show that
$P$ is Murray-von Neumann equivalent to $Q$ on $X_1\cup \bigcup_{i>m_0} \Delta_i$ (see \cite[Theorem 1]{vaserstein}). 
Let $v$ be a partial isometry defined on $\bigcup_{i>m_0} \Delta_i$ such that $P=vv^*$ and $Q=v^*v$
on $\bigcup_{i>m_0} \Delta_i$ ($v$ exists by hypothesis). Let us show that $v$ extends 
to $X_1\cup \bigcup_{i>m_0}\Delta_i$. For this,
it is enough to show that $v$ extends from  $X_1\cap \bigcup_{i>m_0} \Delta_i$ to $X_1$. This
is true by \cite[Proposition 4.2 (1)]{phillips} (applied to 1-dimensional spaces).
\end{proof}

\begin{proposition}\label{CWcompare}
Let $X$ be a finite CW-complex of dimension at most 2. Let $\epsilon>0$. 
Suppose that $a,b\in M_n(C(X))^+$ are of the form 
\begin{align}\label{aandb}
a=\sum_{j=1}^n P_j\lambda_j, \quad b=\sum_{j=1}^n Q_j\lambda_j,
\end{align}
where $(P_j)_{j=1}^n$ and $(Q_j)_{j=1}^n$ are sequences of orthogonal projections of rank 1, 
$(\lambda_j)_{j=1}^n$ is a sequence of scalar functions such that $\lambda_j\geq \lambda_{j+1}$ for $j=1,2\dots n-1$,
and 
\begin{align}
\sum_{j=1}^i P_j\sim\sum_{j=1}^i Q_j \hbox{ on the set }\{x\in X\mid\lambda_i(x)-\lambda_{i+1}(x)\geq \epsilon\},
\label{lammu2}
\end{align}
for $i=1,\dots,n$ (for $i=n$ we take $\lambda_{i+1}=0$ in \eqref{lammu2}). Then $d_U(a,b)<2\epsilon$.
\end{proposition}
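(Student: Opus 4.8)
The plan is to reduce the statement to matching, level by level, the spectral projections of $a$ and $b$, and then to assemble a single conjugating unitary. Write $R_i^a=\sum_{j\leq i}P_j$ and $R_i^b=\sum_{j\leq i}Q_j$ for the two flags of spectral projections; these are honest projections in $M_n(C(X))$ since the $P_j$ (resp.\ $Q_j$) are given, orthogonal, and sum to the identity. Hypothesis \eqref{lammu2} says exactly that $R_i^a\sim R_i^b$ on the closed set $C_i=\{x:\lambda_i(x)-\lambda_{i+1}(x)\geq\epsilon\}$. The first step is to invoke Lemma \ref{pointsCW}: for each $i$ it yields a finite set $F_i\subseteq X\setminus C_i$ with $R_i^a\sim R_i^b$ on $X\setminus F_i$. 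Setting $F=\bigcup_i F_i$, all of the equivalences $R_i^a\sim R_i^b$ hold simultaneously on the open set $X\setminus F$, since Murray--von Neumann equivalence restricts to subsets.

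Next I would upgrade the equivalence of the flags to an equivalence of the individual rank-one projections. Because $P_i$ and $Q_i$ have rank one, the associated line bundles are classified by their first Chern class, and $c_1$ is additive along the flag; from $c_1(R_i^a)=c_1(R_i^b)$ for all $i$ (a consequence of $R_i^a\sim R_i^b$ on $X\setminus F$) one deduces inductively that $c_1(\mathrm{Im}\,P_i)=c_1(\mathrm{Im}\,Q_i)$, hence $P_i\sim Q_i$ on $X\setminus F$ for every $i$. Choosing partial isometries $w_i$ implementing these equivalences and using $\sum_i P_i=\sum_i Q_i=1$, the sum $u=\sum_i w_i$ is a unitary over $X\setminus F$ with $uP_iu^*=Q_i$, and therefore $uau^*=b$ \emph{exactly} on $X\setminus F$. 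Thus no error is incurred away from the finite set $F$.

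It remains to deal with the finitely many points of $F$, and this is the crux. Around each $p\in F$ choose a disjoint disk neighbourhood $D_p$ (possible since $\dim X\leq 2$). On the contractible set $D_p$ every line bundle is trivial, so again $P_i\sim Q_i$ on $D_p$, and assembling these gives a local unitary $u^{(p)}$ on $D_p$ with $u^{(p)}a(u^{(p)})^*=b$ there. The obstruction to gluing $u$ and $u^{(p)}$ along $\partial D_p$ is the class of $(u^{(p)})^*u$ in $\pi_1$ of the commutant of $a$ restricted to the circle $\partial D_p$, i.e.\ a tuple of winding numbers indexed by the eigenvalue blocks of $a$ near $p$. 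The decisive point is that $p$ can belong to $F_i$ only for those $i$ with $p\notin C_i$, that is, with $\lambda_i(p)-\lambda_{i+1}(p)<\epsilon$; so every nonzero winding sits between two eigenvalue levels that differ by less than $\epsilon$ near $p$. Such a winding can be cancelled by a homotopy that transfers it across the small gap, at the cost of displacing the eigenvalues by less than $2\epsilon$. Concretely, I would produce on each $D_p$ a unitary $\tilde u$ agreeing with $u$ on $\partial D_p$ and satisfying $\|\tilde u a\tilde u^*-b\|<2\epsilon$ on $D_p$; gluing these over the disks with $u$ outside yields a global unitary in $(M_n(C(X))\otimes\mathcal K)^\sim$ witnessing $d_U(a,b)<2\epsilon$.

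The main obstacle is exactly this last step: making rigorous the cancellation of the winding obstruction within the prescribed tolerance, and tracking multiplicities along $\partial D_p$. Everything hinges on the coincidence that the bad points $F_i$ for level $i$ lie in the region where the $i$-th spectral gap is smaller than $\epsilon$, so that the low dimension of $X$ (links of points are circles, obstructions live in $\pi_1$) forces the only obstructions to be concentrated precisely where the eigenvalues are close enough to be merged. A careful bookkeeping of the two-sided eigenvalue displacement is what should produce the factor $2\epsilon$ rather than $\epsilon$.
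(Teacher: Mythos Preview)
Your high-level strategy matches the paper's opening move: invoke Lemma \ref{pointsCW} to obtain finite sets $F_i\subseteq X\setminus C_i$ outside which $R_i^a\sim R_i^b$, and use that each $p\in F_i$ lies where the $i$-th spectral gap is below $\epsilon$. But your winding-cancellation step has a real gap (which you yourself flag). The claim that ``every nonzero winding sits between two eigenvalue levels that differ by less than $\epsilon$'' is not justified: it depends on the choices of the $w_j$ and of $u^{(p)}$, and with arbitrary $w_j$ chosen on the small set $X\setminus F$, a point $p$ can carry winding from \emph{any} $w_j$, since the fact that $P_j\sim Q_j$ extends across $p$ (for $j$ away from the bad index) does not force your particular $w_j$ to extend. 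To concentrate the obstruction in the block $\{i_0,i_0+1\}$ you would need the $F_i$ disjoint (the paper arranges this, noting it follows from the proof of Lemma \ref{pointsCW}; you do not), the $w_j$ chosen on the larger sets $X\setminus(F_{j-1}\cup F_j)$, and $w_{i_0}+w_{i_0+1}$ with zero total winding at $p$ --- none of this is automatic, and it is exactly the heart of the argument.

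The paper sidesteps the $\pi_1$ analysis entirely. After taking the $F_i$ disjoint with disjoint neighbourhoods, it \emph{perturbs the eigenvalue functions}: on a small open $V_i\supseteq F_i$ inside $X\setminus C_i$ it raises $\lambda_{i+1}$ to coincide with $\lambda_i$, at cost $<\epsilon$ to each of $a$ and $b$ (this is the source of the factor $2$). For the perturbed elements it then proves \emph{exact} Murray--von Neumann equivalence, inductively replacing $P_{k_0}$ (the first index with $P_{k_0}\nsim Q_{k_0}$) by a projection $P_{k_0}'\leq P_{k_0}+P_{k_0+1}$ satisfying $P_{k_0}'\sim Q_{k_0}$ on all of $X$; the tool is Phillips' extension result \cite[Proposition 4.2 (1)]{phillips}, which extends the partial isometry from $X\setminus V_{k_0}$ to $X$ with range inside the rank-two block, and since $\lambda_{k_0}=\lambda_{k_0+1}$ on $V_{k_0}$ this rearrangement leaves $a$ unchanged. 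Thus the paper trades your local obstruction bookkeeping for a global eigenvalue perturbation followed by an exact equivalence via Phillips' lemma; your route may be completable with substantially more care, but not as written.
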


\begin{proof}
Let $\epsilon>0$ and $a$ and $b$ be as in the statement of the lemma.
Let us perturb the elements $a$ and $b$ by modifying 
the functions $(\lambda_i)_{i=1}^n$ in the following way: For $i=1,2,\dots,n$, let us denote by
$C_i$ the set $\{x\in X\mid \lambda_i(x)-\lambda_{i+1}(x)\geq \epsilon\}$.
By \eqref{lammu2} and Lemma \ref{pointsCW}, there are finite sets $F_i\subseteq X\backslash C_i$ such that $\sum_{j=1}^i P_j$ is
Murray-von Neumann to $\sum_{j=1}^i Q_j$ on $X\backslash F_i$ for $i=1,2,\dots,n$. Let us choose
the sets $F_i$ so that they are disjoint for different $i$s (it is clear from the proof of Lemma \ref{pointsCW} that this
is possible). Furthermore, for every $x\in \bigcup_{i=1}^n F_i$ 
let us choose an open  neighbourhood $U(x)$ of $x$ such that $U(x)\cap U(x')=\varnothing$ for $x\neq x'$
and $U(x)\cap C_i=\varnothing$ for $x\in F_i$. Starting with $i=1$, and proceeding
to $i=2,\dots,n$, let us
perturb  the function $\lambda_{i+1}$ on the set $\bigcup_{x\in F_i} U(x)$ by an amount less than
$\epsilon$, and  so that $\lambda_{i+1}(x)=\lambda_i(x)$ for $x$ in some open set $V_i$ such that
$F_i\subset V_i$ and $\overline {V_i}\subseteq\bigcup_{x\in F_i} U(x)$.

Since the sets $\bigcup_{x\in F_i} U(x)$
are disjoint for different values of $i$, the resulting perturbations of $a$ and $b$ are within
a distance of $\epsilon$ of their original values. These perturbations,  which we continue to denote by $a$ and 
$b$,  satisfy that
\begin{align}\label{abconds}
&a =\sum_{j=1}^n  P_j\lambda_j ,\quad
b =\sum_{j=1}^n Q_j \lambda_j,\\
\label{abconds2}
&\sum_{j=1}^i P_j \sim \sum_{j=1}^i Q_j \hbox{ on }X\backslash V_i, \hbox{ for }i=1,2,\dots,n,\\
&V_i\subseteq \{x\mid \lambda_i(x)=\lambda_{i+1}(x)\}, \hbox{ and }V_i \hbox{ is open.}\label{abconds3}
\end{align}
The proposition will be proved once we show that, under the conditions \eqref{abconds}-\eqref{abconds3},
the elements $a$ and $b$ are Murray-von Neumann equivalent.
This amounts to finding a sequence of orthogonal projections $(R_i)_{i=1}^n$ in $M_n(C(X))$ such that
$a=\sum_{j=1}^n  R_j\lambda_j$, and $R_i\sim Q_i$ for $i=1,\dots,n$. Let us show that this is possible.

The sequence $(R_i)_{i=1}^n$ will be obtained by a series of modifications on the sequence
$(P_i)_{i=1}^n$. Let $k_0$ be the smallest index such that $P_{k_0}\nsim Q_{k_0}$. From 
$\sum_{j=1}^{k_0-1}P_i\sim \sum_{j=1}^{k_0-1}Q_i$ and \eqref{abconds2}, we get that 
$P_{k_0}\sim Q_{k_0}$ on $X\backslash V_{k_0}$ (since there is cancellation of projections over
spaces of dimension at most 2). Let $v$ be the partial isometry defined on $X\backslash V_{k_0}$ such that 
$P_{k_0}=vv^*$ and $Q_{k_0}=v^*v$ on $X\backslash V_{k_0}$. It is guaranteed by \cite[Proposition 4.2 (1)]{phillips} 
that $v$ can be extended to a partial isometry $w$ on $X$ such that $w^*w=Q_{k_0}$ and $ww^*\leq P_{k_0}+P_{k_0+1}$. Set $ww^*=P_{k_0}'$, with $w$ being such an extension of $v$. Then
$P_{k_0}'$ is such that $P_{k_0}'\sim Q_{k_0}$, $P_{k_0}'\leq P_{k_0}+P_{k_0+1}$, and 
$P_{k_0}'(x)=P_{k_0}(x)$ for all $x\in X\backslash V_{k_0}$.
Let  $P_{k_0+1}'$ be the projection such that 
$P_{k_0}'+P_{k_0+1}'=P_{k_0}+P_{k_0+1}$.
We have
\[
P_{k_0}\lambda_{k_0}+P_{k_0+1}\lambda_{k_0+1}=P_{k_0}'\lambda_{k_0}+P_{k_0+1}'\lambda_{k_0+1}.
\]
Thus, replacing $P_{k_0}$ and $P_{k_0+1}$ by $P_{k_0}'$ and $P_{k_0+1}'$ respectively, we obtain
a new sequence of projections $(P_i)_{i=1}^n$ that satisfies \eqref{abconds} and \eqref{abconds2},
and also $P_{k}\sim Q_{k}$ for $k\leq k_0$. Continuing this process we obtain the desired sequence $(R_i)_{i=1}^n$.
\end{proof}

\begin{proof}[Proof of Theorem \ref{extension2}]
Let $\epsilon>0$ (and assume $\epsilon<1$). Let $g_\epsilon\in C_0(0,1]$ be a function such that 
$g_\epsilon(t)=\frac{\epsilon}{t}$ for $t\in [\epsilon,1]$, and 
$0\le g_\epsilon(t)\le 1$ for $t\in (0,1]$. Let $F\subseteq C_0(0,1]\otimes C_0(0,1]$
be the set $F=\{\mathrm{id}\otimes \id,\id\otimes g_\epsilon\}$. Let us prove
that
\[
(\Cu(\phi\otimes \mathrm{Id}),\Cu(\psi\otimes \mathrm{Id}))\in U_{F,\frac{\epsilon^2}{2}}\Rightarrow 
d_U(\phi(\id),\psi(\id))<2\epsilon+\frac{\epsilon^2}{2},
\]
where $\phi$, $\psi$, and $A$ are as in the statement of the theorem. 
Let us express what we wish to prove in terms of positive contractions (via the
bijection $\phi\mapsto \phi(\id)$). For $a,b\in A$ positive contractions, we have 
\begin{align*}
d_W^{\id\otimes\id}(a\otimes \id,b\otimes\id)=d_W(a\otimes\id,b\otimes\id),\\
d_W^{\id\otimes g_\epsilon}(a\otimes\id,b\otimes\id)=d_W(a\otimes g_\epsilon,b\otimes g_\epsilon). 
\end{align*}
Thus, we want to show that 
\begin{align}\label{withcontractions}
\begin{array}{l}
d_W(a\otimes\id,b\otimes\id)<\frac{\epsilon^2}{2},\\
d_W(a\otimes g_\epsilon,b\otimes g_\epsilon)<\frac{\epsilon^2}{2},
\end{array}
\Rightarrow d_U(a,b)<2\epsilon+\frac{\epsilon^2}{2},
\end{align}
for $a$ and $b$ positive contractions.

Let us first show that if we have \eqref{withcontractions} for the C*-algebras $(A_i)_{i=1}^\infty$
of a sequential inductive system, then we also have \eqref{withcontractions} for their inductive limit $A$.
By the continuity of the pseudometrics $d_W$ and $d_U$
(see  Lemma \ref{continuous}), it is enough to prove \eqref{withcontractions} assuming
that $a$ and $b$ belong to a dense subset of the positive contractions of $A$. Thus, we may assume
that $a$ and $b$ are the images in $A$ of positive contractions in some C*-algebra $A_i$, $i\in \N$.
Suppose we have $a',b'\in A_i$ such that their images in $A$ satisfy the inequalities of the left side
of \eqref{withcontractions}. By Lemma \ref{WTlimit} (ii), it is possible to  move $a'$ and $b'$ along the inductive limit
to a C*-algebra $A_j$, $j\geq i$, so that these same inequalities hold in the C*-algebra $A_j$.
We conclude that $d_U^{A_j}(\phi_{i,j}(a'),\phi_{i,j}(b'))<\epsilon$. Moving $a$ and $b$ back to
the limit we get the right side of \eqref{withcontractions}.

From the discussion of the previous paragraph, it is enough to prove \eqref{withcontractions} for $A=C(X)\otimes \mathcal K$, with $X$ a compact metric space of dimension at most 2. Moreover, since
a compact metric space of dimension at most 2 is a sequential projective limit
of finite CW-complexes of dimension at most 2 (see \cite[Theorem 1.13.5]{engelking}), we
are reduced to proving \eqref{withcontractions} for the case that $A=C(X)\otimes \mathcal K$, where $X$ is a finite
CW-complex of dimension at most 2.


Let us suppose $A=C(X)\otimes \mathcal K$, where $X$ is a finite
CW-complex of dimension at most 2. It is enough to prove \eqref{withcontractions} assuming that
$a,b\in M_n(C(X))$ for some $n\in \N$. Moreover, by
Choi and Elliott's \cite[Theorem 1]{choi-elliott}, we may assume that $a(x)$ and $b(x)$ have distinct
eigenvalues (as matrices in $M_n(\C)$) for all $x\in X$. (Choi and Elliott's Theorem implies
that such a set is dense in the set of positive contractions of $M_n(C(X))$ for $\dim X\leq 2$.)
This implies (see the proof of \cite[Theorem 1.2]{thomsen}) that $a$ and $b$ have the form
\begin{align}\label{abPQ}
 a=\sum_{j=1}^n P_j\lambda_i \hbox{\, and \,}b=\sum_{j=1}^n Q_j\mu_i,
\end{align}
for some sequences of orthogonal projections of rank 1 $(P_i)_{i=1}^n$ and  $(Q_i)_{i=1}^n$, and scalar eigenfunctions 
$(\lambda_i)_{i=1}^n$ and  $(\mu_i)_{i=1}^n$, such that $1\geq \lambda_1(x)>\lambda_2(x)>\dots>0$ and $1\geq \mu_1(x)>\mu_2(x)>\dots>0$.

From $d_W(a\otimes\id,b\otimes\id)<\frac{\epsilon^2}{2}$ we deduce that 
$d_W(a,b)<\frac{\epsilon^2}{2}$ 
(evaluating $\id$ at $t=1$), and so $\|\lambda_i-\mu_i\|<\frac{\epsilon^2}{2}$ for all $i$ (see the proof of Theorem \ref{isometry}). Let $b'\in M_n(C(X))$ be given by $b'=\sum_{i=1}^n Q_i\lambda_i$.
Then $d_U(b,b')<\epsilon^2/2$ and
\[
d_W(a\otimes g_\epsilon,b'\otimes g_\epsilon)\leq d_W(a\otimes g_\epsilon,b\otimes g_\epsilon)+d_W(b\otimes g_\epsilon,b'\otimes g_\epsilon)<\epsilon^2.
\]
The implication \eqref{withcontractions} will be proven once we have shown that 
\[
d_W(a\otimes g_\epsilon,b'\otimes g_\epsilon)<\epsilon^2\Rightarrow d_U(a,b')<2\epsilon.
\]
In order to prove this, it is enough to show that the left side of this implication
implies \eqref{lammu2} of Proposition \ref{CWcompare} (applied to the elements $a$ and $b'$). 
Let us choose $\epsilon'>0$  such that 
$d_W(a\otimes g_\epsilon,b'\otimes g_\epsilon)<\epsilon^2-\epsilon'\epsilon$. By the definition
of $d_W$ we have that
\[
(a\otimes g_\epsilon-(\epsilon-\epsilon'\epsilon))_+\cuntzle_{Cu} (b'\otimes g_\epsilon-(\epsilon-\epsilon^2))_+.
\]
Let us identify $M_n(C(X))\otimes C_0(0,1]$ with $M_n(C_0(X\times (0,1]))$ and 
express the Cuntz comparison above  in terms of the projections $(P_i)_{i=1}^n$ and $(Q_i)_{i=1}^n$, and the eigenfunctions $(\lambda_i)_{i=1}^n$. We get 
\begin{equation}\label{cuntzineq}
\sum_{j=1}^n P_j(x)(\lambda_j(x)g_\epsilon(t)-\epsilon+\epsilon'\epsilon)_+\cuntzle_{Cu}
\sum_{j=1}^n Q_j(x)(\lambda_j(x)g_\epsilon(t)-\epsilon+\epsilon^2)_+,
\end{equation} 
for $(x,t)\in X\times (0,1]$. Note: this Cuntz relation comparison is not to be understood as a pointwise relation, but
rather as a relation in the C*-algebra $M_n(C(X\times (0,1]))$.

For $i\leq n$ let us define the set 
\[
T_i=\{x\in X\mid \lambda_{i+1}(x)/\lambda_{i}(x)\leq 1-\epsilon\hbox{ and }\lambda_i(x)\geq \epsilon\}.
\]
Let $C_i\subseteq X\times (0,1]$ be the closed set $C_i=\{(x,\lambda_i(x))\mid x\in T_i\}$.  Restricting the 
Cuntz comparison \eqref{cuntzineq} to the set $C_i$, and using the definition 
of $g_\epsilon$, we get that
\begin{align*}
P_1 &\left(\frac{\lambda_1}{\lambda_i} - (1-\epsilon')\right)_+   + P_2 \left(\frac{\lambda_2}{\lambda_i}-(1-\epsilon')\right)_+ +\dots +\epsilon' P_i
\preceq_{Cu} \\
& Q_1 \left(\frac{\lambda_1}{\lambda_i}-(1-\epsilon)\right)_+ + Q_2 \left(\frac{\lambda_2}{\lambda_i}-(1-\epsilon)\right)_+ +\dots +
\epsilon Q_i,
\end{align*}
on the closed set $T_i$. It follows that $\sum_{j=1}^i P_j\cuntzle_{Cu} \sum_{j=1}^i Q_j$ on $T_i$. 
In the same way we can prove that $\sum_{j=1}^i Q_j\cuntzle_{Cu} \sum_{j=1}^i P_j$ on $T_i$, and so
$\sum_{j=1}^i P_j\sim \sum_{j=1}^i Q_j$ on $T_i$. If  
$\lambda_i(x)-\lambda_{i+1}(x)\geq \epsilon$ then $\lambda_{i+1}(x)/\lambda_{i}(x)\leq 1-\epsilon$
and $\lambda_i(x)\geq \epsilon$. Hence, $\{x\in X\mid \lambda_i(x)-\lambda_{i+1}(x)\geq \epsilon\}\subseteq T_i$. 
Therefore, the elements $a$ and $b'$ satisfy the condition \eqref{lammu2} of Proposition \ref{CWcompare}. 
This completes the proof of the theorem.
\end{proof}

\begin{bibdiv}
\begin{biblist}
\bib{choi-elliott}{article}{
   author={Choi, M. D.},
   author={Elliott, G. A.},
   title={Density of the selfadjoint elements with finite spectrum in an
   irrational rotation C*-algebra},
   journal={Math. Scand.},
   volume={67},
   date={1990},
   pages={73--86},
}

\bib{ciuperca-elliott}{article}{
   author={Ciuperca, A.},
   author={Elliott, G. A.},
   title={A remark on invariants for C*-algebras of stable rank one},
   journal={Int. Math. Res. Not. IMRN},
   date={2008},
   number={5},
}

\bib{coward-elliott-ivanescu}{article}{
   author={Coward, K. T.},
   author={Elliott, G. A.},
   author={Ivanescu, C.},
   title={The Cuntz semigroup as an invariant for C*-algebras},
   journal={J. Reine Angew. Math.},
   volume={623},
   date={2008},
   pages={161--193},
}

\bib{engelking}{book}{
   author={Engelking, R.},
   title={Dimension theory},
   publisher={North-Holland Publishing Co.},
   place={Amsterdam},
   date={1978},
   pages={x+314 pp. (loose errata)},
}

\bib{kirchberg-rordam}{article}{
   author={Kirchberg, E.},
   author={R{\o}rdam, M.},
   title={Infinite non-simple C*-algebras: absorbing the Cuntz
   algebras $\scr O\sb \infty$},
   journal={Adv. Math.},
   volume={167},
   date={2002},
   pages={195--264},
   issn={0001-8708},
}

\bib{phillips}{article}{
   author={Phillips, N. C.},
   title={Recursive subhomogeneous algebras},
   journal={Trans. Amer. Math. Soc.},
   volume={359},
   date={2007},
   number={10},
   pages={4595--4623 (electronic)},
}

\bib{robert}{article}{
author={Robert, L.},
title={The Cuntz semigroup of some spaces of dimension at most 2},
status={preprint, arXiv:0711.4396v2},
date={2007},
}

\bib{rordam1}{article}{
   author={R{\o}rdam, M.},
   title={The stable and the real rank of $\scr Z$-absorbing C*-algebras},
   journal={Internat. J. Math.},
   volume={15},
   date={2004},
   pages={1065--1084},
}

\bib{rordam-winter}{article}{
   author={R{\o}rdam, M.},
   author={Winter, W.},
   title={The Jiang-Su algebra revisited},
   journal={ J. Reine Angew. Math.},
   status={to appear},
}


\bib{thomsen}{article}{
   author={Thomsen, K.},
   title={Homomorphisms between finite direct sums of circle algebras},
   journal={Linear and Multilinear Algebra},
   volume={32},
   date={1992},
   number={1},
   pages={33--50},
}

\bib{thomsen2}{article}{
   author={Thomsen, K.},
   title={Inductive limits of interval algebras: unitary orbits of positive
   elements},
   journal={Math. Ann.},
   volume={293},
   date={1992},
   number={1},
   pages={47--63},
}

\bib{vaserstein}{article}{
   author={Vaserstein, Leonid N.},
   title={Vector bundles and projective modules},
   journal={Trans. Amer. Math. Soc.},
   volume={294},
   date={1986},
   pages={749--755},
   issn={0002-9947},
 }

\bib{villadsen}{article}{
   author={Villadsen, J.},
   title={On the stable rank of simple C*-algebras},
   journal={J. Amer. Math. Soc.},
   volume={12},
   date={1999},
   number={4},
   pages={1091--1102},
}

\end{biblist}
\end{bibdiv}

\end{document}